\tikzstyle directed=[postaction={decorate,decoration={markings,
    mark=at position .5 with {\arrow{stealth}}}}]
\tikzstyle rdirected=[postaction={decorate,decoration={markings,
    mark=at position .5 with {\arrow{stealth reversed}}}}]
\theoremstyle{plain}
\newtheorem{theorem}{Theorem}
\newtheorem{lemma}[theorem]{Lemma}
\newtheorem{proposition}[theorem]{Proposition}
\newtheorem{corollary}[theorem]{Corollary}
\newtheorem{claim}[theorem]{Claim}
\newtheorem*{claim*}{Claim}
\theoremstyle{definition}
\newtheorem{definition}[theorem]{Definition}
\newtheorem{example}[theorem]{Example}
\newtheorem{remark}[theorem]{Remark}
\numberwithin{theorem}{section}
\numberwithin{equation}{section}
\newcommand{\CC}{\mathbb{C}}
\newcommand{\ZZ}{\mathbb{Z}}
\newcommand{\calE}{\mathcal{E}}
\newcommand{\calF}{\mathcal{F}}
\newcommand{\calG}{\mathcal{G}}
\newcommand{\calN}{\mathcal{N}}
\newcommand{\calV}{\mathcal{V}}
\newcommand{\calX}{\mathcal{X}}
\newcommand{\tT}{{\widetilde T}}
\newcommand{\tX}{{\widetilde X}}
\newcommand{\tY}{{\widetilde Y}}
\newcommand{\udelta}{\underline{\delta}}
\newcommand{\uomega}{\omega}
\newcommand{\const}{10^6}
\newcommand{\abs}[1]{\left\lvert {#1} \right\rvert} 
\newcommand{\I}[1]{\langle #1 \rangle}
\newcommand{\from}{\colon\thinspace}
\newcommand{\norm}[1]{\left\| {#1} \right\|}
\newcommand{\PPD}[3]{% % second partial derivative
\ifthenelse{\equal{#2}{#3}}%
{\frac{\partial^{2} #1}{\partial #2^{2}}}%
{\frac{\partial^{2} #1}{\partial #2 \partial #3}}%
}
\newcommand{\detG}[1][1]{%
\ifthenelse{\equal{#1}{1}}%
{\operatorname{det}_{G}}%
{\operatorname{det}_{#1}}%
}
\newcommand{\param}%
	{{\mathchoice{\mkern1mu\mbox{\raise2.2pt\hbox{$\centerdot$}}\mkern1mu}%
	{\mkern1mu\mbox{\raise2.2pt\hbox{$\centerdot$}}\mkern1mu}%
	{\mkern1.5mu\centerdot\mkern1.5mu}{\mkern1.5mu\centerdot\mkern1.5mu}}}
\DeclareMathOperator{\Aut}{Aut}
\DeclareMathOperator{\Out}{Out}
\DeclareMathOperator{\diam}{diam}
\DeclareMathOperator{\gdim}{gd}
\DeclareMathOperator{\entropy}{ent}
\DeclareMathOperator{\vol}{vol}
\begin{document}

%%%%%%%%%%%%%%%%%%%%%%%%%%%%%%%%%%%%%%%%%%%%%%%%%%%%%%%%%%%%%%%%%%%%%%%%%%%%% 

\title[Minimal volume entropy of $F_n \rtimes_\Phi \ZZ$ and 2--dimensional RAAGs]{Minimal volume entropy of free-by-cyclic groups and 2--dimensional right-angled Artin groups}

\author[C.~Bregman]{Corey Bregman}
\address{Department of Mathematics\\
University of Southern Maine\\
Portland, ME 04103, USA\\}
\email{\href{mailto:corey.bregman@maine.edu}{corey.bregman@maine.edu}}

\author[M. Clay]{Matt Clay}
\address{Department of Mathematics\\
University of Arkansas\\
Fayetteville, AR 72701, USA\\}
\email{\href{mailto:mattclay@uark.edu }{mattclay@uark.edu}}

%\thanks{\tiny Thanks!}

\begin{abstract}
Let $G$ be a free-by-cyclic group or a 2--dimensional right-angled Artin group.  We provide an algebraic and a geometric characterization for when each aspherical simplicial complex with fundamental group isomorphic to $G$ has minimal volume entropy equal to $0$.  In the nonvanishing case, we provide a positive lower bound to the minimal volume entropy of an aspherical simplicial complex of minimal dimension for these two classes of groups.  Our results rely upon a criterion for the vanishing of the minimal volume entropy for 2--dimensional groups with uniform uniform exponential growth.  %Specifically, when $G$ is a free-by-cyclic group, the lower bound is $\frac{\log 3}{12 \cdot \const}$; when $G$ is a 2--dimensional right-angled Artin group this lower bound is $\frac{\log 3}{2 \cdot \const}$.  
This criterion is shown by analyzing the fiber $\pi_1$--growth collapse and non-collapsing assumptions of Babenko--Sabourau~\cite{un:BS}.   
\end{abstract}

\maketitle

%\tableofcontents

%%%%%%%%%%%%%%%%%%%%%%%%%%%%%%%%%%%%%%%%%%%%%%%%%%%%%%%%%%%%%%%%%%%%%%%%%%%%% 

\section{Introduction}

The \emph{volume entropy} of a finite simplicial complex $X$ equipped with a piecewise Riemannian metric $g$ is defined as
\[\entropy(X,g)=\lim_{t\rightarrow\infty}\frac{1}{t}\log \vol(B_{x_0}(t),\tilde{g})\]
where $B_{x_0}(t)$ is the ball of radius $t$ centered at some point $x_0$ in the universal cover $\widetilde{X}$ and $\tilde{g}$ is the pull-back metric on $\widetilde{X}$.  This limit always exists and does not depend on the choice of $x_0$. Initially defined as a Riemannian manifold invariant, the volume entropy measures the exponential growth rate of the volume of balls in the universal cover and is related to the growth of the fundamental group (\v{S}varc\cite{ar:Svarc55} and Milnor~\cite{ar:Milnor68}) and to the dynamics of the geodesic flow.  Specifically, in this context, Dinaburg showed that the volume entropy gives a lower bound on the topological entropy of the geodesic flow~\cite{ar:Dinaburg71}.  Manning further showed that if the sectional curvatures for the metric are all nonpositive, then the volume entropy equals the topological entropy of the geodesic flow~\cite{ar:Manning79}.    

In order to obtain a topological invariant of $X$, it is natural to optimize the volume entropy over all piecewise Riemannian metrics.  To get an invariant that is nondegenerate, we must take into account the effect of scaling the metric by a constant and counteract this by multiplying the volume entropy by an appropriate root of the volume.  This leads to the notion of minimal volume entropy, introduced by Gromov originally in the context of Riemannian manifolds~\cite{ar:Gromov82}.  To this end, we set
\[ \omega(X,g) = \entropy(X,g)\vol(X,g)^{1/\dim(X)}. \]
The \emph{minimal volume entropy} of a finite simplicial complex $X$ is defined by \[ \omega(X) = \inf_g \omega(X,g) \] where $g$ runs over all piecewise Riemannian metrics on $X$.   

When $M$ is a closed, orientable $n$--manifold, Gromov showed that $\omega(M)^n\geq c_n\norm{M}$ where $\norm{M}$ is the simplicial volume of $M$ and $c_n>0$ is a constant that only depends on the dimension \cite{ar:Gromov82}.  In dimensions at most 3, the invariants $\omega(M)^n$ and $\norm{M}$ are proportional, as we explain below.  It is unknown whether or not the reverse inequality holds up to a constant in higher dimensions.  Nevertheless, it is in this sense that $\omega(X)^{\dim(X)}$ can be viewed as a substitute for simplicial volume for $X$ when there is no natural choice of fundamental class.

%Thus, $\omega(M)$ is nonzero for many manifolds, but in only a handful of cases has the exact value of $\omega(X)$ been computed.   

Katok was the first to realize that minimal volume entropy could select an optimal metric, up to scale.  He proved that if $M$ is a closed surface with negative Euler characteristic then $\omega(M,g) \geq \omega(M,g_{\rm hyp})$ where $g_{\rm hyp}$ is any hyperbolic metric, with equality if and only if $g$ has constant curvature~\cite{ar:Katok82}.  This was extended by Besson--Courtois--Gallot to closed, real hyperbolic manifolds of any dimension.~\cite{ar:BCG91}.

For simplicial complexes that are not manifolds, there are few results.  When $X$ is a finite connected graph and every vertex has degree at least 3, Lim gave an explicit description of a metric $g_0$ so that $\omega(X) = \omega(X,g_0)$~\cite{ar:Lim08}.  Analogous to the results for closed real hyperbolic manifolds mentioned above, Lim additionally proves that this metric is unique up to scale.  McMullen gave an alternate proof of this result~\cite{ar:McMullen15}; I.~Kapovich--Nagnibeda gave a proof of this result when every vertex in the graph has degree 3~\cite{ar:KN07}.  

Other general results regarding minimal volume entropy for simplicial complexes include the fiber $\pi_1$--growth collapsing/non-collapsing assumptions recently provided by Babenko--Sabourau that are useful in showing whether or not $\omega(X)$ vanishes~\cite{un:BS}.  These will play a key role in this paper and are discussed in more detail later on in the Introduction and in Section~\ref{sec:collapsing}.

As mentioned above, the volume entropy is related to the growth of the fundamental group in that it---or a slight variation---can be used to determine the growth type: polynomial or exponential.  However, in general, the minimal volume entropy of a simplicial complex does depend on more than the fundamental group, as originally observed by Babenko~\cite{ar:Babenko92}.  (Although it will not play a role in what follows, in the context of manifolds there are circumstances where the minimal volume entropy is determined by the fundamental group; see the works of Babenko~\cite{ar:Babenko92} and Brunnbauer~\cite{ar:Brunnbauer08}.)

This leads into the central object of study in this paper.  For a fixed group $G$ we study the minimal volume entropy of a $G$--complex i.e., a finite aspherical simplicial complex $X$ such that $\pi_1(X) \cong G$.    By taking the infimum over $G$--complexes with minimal dimension, we obtain an invariant of a group $G$ of finite type.  We thus define the \emph{minimal volume entropy of $G$} as 
\[ \uomega(G)  = \inf_X \omega(X) \] 
where $X$ runs over all $G$--complexes with $\dim(X)$ equal to the geometric dimension, $\gdim(G)$, i.e., the minimal dimension of a $G$--complex.  For free groups, it was observed by both I.~Kapovich--Nagnibeda~\cite{ar:KN07} and McMullen~\cite{ar:McMullen15} that if $X$ is a finite graph and $\pi_1(X)$ is isomorphic to a free group of rank $n$, then $\omega(X) \geq (3n-3)\log 2$ with equality if and only if every vertex in $X$ has degree 3.  Cast in the above language, this gives $\uomega(F_n) = (3n-3)\log 2$. 

%Thus, $\uomega(G)$ can be viewed as an analogue of the simplicial volume even in the absence of a fundamental class.  

%free-by-cyclic groups

In this paper we study the minimal volume entropy when $G$ is either a free-by-cyclic group or a 2--dimensional right-angled Artin group (RAAG).  Each such group $G$ admits a 2--dimensional aspherical $G$--complex.  In each case, we prove that either the minimal volume entropy vanishes for every $G$--complex or $\uomega(G)$ is uniformly bounded from below. Moreover, as we will describe below, whether or not $\uomega(G)$ vanishes is directly related to whether or not $G$ is \emph{tubular}, i.e., whether it admits a graph of groups decomposition with vertex groups equal to $\ZZ^2$ and edge groups equal to $\ZZ$. 

We state our results in these two cases and then describe how we apply the fiber $\pi_1$--growth collapsing/non-collapsing assumptions of Babenko--Sabourau.

\medskip \noindent {\it Free-by-cyclic groups.} Every free-by-cyclic group is determined by a finite rank free group $F_n$ and an element $\phi\in \Out(F_n)$, the outer automorphism group of $F_n$. Denote by $G_\phi$ the free-by-cyclic group associated to $\phi$.  Specifically, the group $G_\phi$ is given by the presentation:
\[ G_\phi = \I{F_n,t \mid txt^{-1} = \Phi(x)} \]
where $\Phi \in \Aut(F_n)$ represents $\phi$.

We are able give an explicit description---up to passing to a power---for which $\phi$ lead to vanishing minimal volume entropy.   We call outer automorphisms with such a description \emph{geometrically linear unipotent} (GLU) (see Definition \ref{def:tubular aut}). We prove:

%PG, EG, atoroidal

\begin{theorem}\label{thm:FbyZ}
Suppose that $\phi$ is an outer automorphism of a finitely generated free group. The following are equivalent:
\begin{enumerate}
\item  $\omega(X) = 0$ for every $G_\phi$--complex $X$.\label{item:FbyZ omega = 0}
\item $G_\phi$ is virtually tubular.\label{item:FbyZ tubular}  
\item Some power of $\phi$ is geometrically linear unipotent.\label{item:FbyZ glu}
\end{enumerate}
If none of these conditions hold, then $\uomega(G_\phi) \geq \frac{\log 3}{12 \cdot \const}$.
\end{theorem}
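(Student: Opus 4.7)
The plan is to prove the three implications cyclically—(3) $\Rightarrow$ (2) $\Rightarrow$ (1) $\Rightarrow$ (3)—while establishing the quantitative lower bound as part of the last implication. More precisely, I will prove that if no power of $\phi$ is GLU, then $\uomega(G_\phi) \geq \frac{\log 3}{12 \cdot \const}$; since $\uomega$ is an infimum over minimal-dimension $G_\phi$--complexes, this positivity forces every such complex to have positive $\omega$, which in particular contradicts condition (1).

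For (3) $\Rightarrow$ (2), I would unpack the definition of a geometrically linear unipotent representative of $\phi^k$ to construct an explicit graph-of-groups decomposition of $G_{\phi^k}$ in which each vertex group is $\ZZ^2$ (generated by a $\phi^k$--invariant conjugacy class together with the stable letter $t^k$) and each edge group is $\ZZ$. Since $G_{\phi^k}$ embeds as a finite-index subgroup of $G_\phi$ via the $k$--fold cyclic cover of the mapping torus, this exhibits $G_\phi$ as virtually tubular.

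For (2) $\Rightarrow$ (1), let $H$ be a finite-index tubular subgroup of $G_\phi$ and let $X$ be any $G_\phi$--complex. Since volume entropy is controlled up to a bounded multiplicative factor on finite covers, it suffices to verify $\omega(\tX) = 0$ for the cover $\tX \to X$ corresponding to $H$. Using the tubular graph-of-groups structure of $H$ acting on $\tX$, I would assemble an explicit family of metrics $g_\epsilon$ which enlarges each $\ZZ^2$ vertex space into a flat torus of large diameter while shrinking the edge annuli, so that $\omega(\tX, g_\epsilon) \to 0$; equivalently, the tubular decomposition directly verifies the fiber $\pi_1$--growth collapsing assumption of Babenko--Sabourau~\cite{un:BS}.

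The principal obstacle is the quantitative implication $\neg(3) \Rightarrow \uomega(G_\phi) > 0$ with the explicit constant. My approach is to combine the paper's own criterion—which bounds $\uomega$ from below for 2--dimensional groups enjoying both uniform uniform exponential growth and the non-collapsing assumption of Babenko--Sabourau—with a structural analysis of $\phi$ via relative train tracks. If no power of $\phi$ is GLU, the Bestvina--Feighn--Handel dichotomy forces either an exponentially growing stratum, which supplies nonabelian free subgroups in $G_\phi$ with quantifiable hyperbolic dynamics, or a polynomially growing $\phi$ whose unipotent action fails to be geometrically realizable and yields instead a Baumslag--Solitar-type obstruction; in either situation I would extract uniform uniform exponential growth of $G_\phi$ at rate at least $\log 3$ together with the non-collapsing hypothesis. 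Applying the paper's criterion then delivers the lower bound. The delicate step is propagating numerical constants through both the relative train track decomposition and the Babenko--Sabourau criterion to land on precisely $\frac{\log 3}{12 \cdot \const}$.
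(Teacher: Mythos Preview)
Your sketches of (3)$\Rightarrow$(2) and (2)$\Rightarrow$(1) are essentially the paper's arguments. The problem lies in how you handle the remaining implication and the lower bound.

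You treat the lower bound and the implication $\neg(3)\Rightarrow\neg(1)$ as a single task requiring relative train track theory. In the paper these are decoupled, and the lower bound is far easier than you suggest. Lemmas~\ref{lem:FbyZ subexponential subgroups} and~\ref{lem:uniform uniform FbyZ} show that \emph{every} free-by-cyclic group $G_\phi$ has Property~$U$ with $\udelta(G_\phi)\geq\frac{1}{6}\log 3$, regardless of whether any power of $\phi$ is GLU. Proposition~\ref{prop:complementary} then gives the dichotomy: either $\omega(X)=0$ for all $G_\phi$--complexes, or $\uomega(G_\phi)\geq \frac{\udelta(G_\phi)}{2\cdot\const}=\frac{\log 3}{12\cdot\const}$. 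No train tracks, no FNCA verification beyond the complementary lemma. (Incidentally, the uniform uniform rate is $\frac{1}{6}\log 3$, not $\log 3$; the factor of $6$ is what produces the $12$ in the final constant.)

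The genuine content is therefore (1)$\Rightarrow$(3), and here your proposal has a gap. You want to argue directly that $\neg$GLU forces the FNCA, via a Bestvina--Feighn--Handel case split on growth type. But FNCA is a statement about \emph{every} simplicial map from \emph{every} $G_\phi$--complex to a graph; the train track structure of $\phi$ does not obviously control all such maps. Your hardest case is a polynomially---even linearly---growing $\phi$ no power of which is GLU (the paper explicitly notes such $\phi$ exist). You gesture at a ``Baumslag--Solitar-type obstruction'' without saying what it is or how it obstructs collapsing. The paper avoids this entirely by working in the other direction: assuming $\omega(X)=0$, Theorem~\ref{thm:zero iff graph of groups} produces a graph-of-groups decomposition of $G_\phi$ with vertex groups in $\{\ZZ,\ZZ^2,BS(1,-1)\}$ and edge groups in $\{\{1\},\ZZ\}$; then one analyzes the induced $F_n$--action on the Bass--Serre tree. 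The key external input, which you do not mention, is Brinkmann's theorem that any $\ZZ$--splitting of $G_\phi$ induces a splitting of $F_n$ with finite underlying graph and trivial edge groups. This is what forces the primitive free splitting of $F_n$ and lets one read off the GLU form of (a power of) $\phi$ from the stabilizer data.
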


There is an established connection between free-by-cyclic groups and mapping tori $M_f$ of homeomorphisms of closed orientable surfaces $f \from S \to S$.  GLU automorphisms have linear growth, and via this connection are reminiscent of a multi-twist homemorphism of a closed orientable surface.  Pieroni showed that $\omega(M)^3$, for $M$ a closed orientable 3--manifold, equals two times the sum of the volumes of the hyperbolic components in the JSJ decomposition~\cite{un:Pieroni}.  In particular, by the celebrated result of Thurston, the minimal volume entropy of $M_f$ vanishes if and only if some power of $f$ is homotopic to a multi-twist~\cite{ar:Thurston82}. In this way, Theorem~\ref{thm:FbyZ} result can be regarded as a free group analogue.  However, in contrast to the case of a mapping class on a closed surface, not all subexponentially growing outer automorphisms of free groups have linear growth, and not all $G_\phi$ with linearly growing $\phi$ have vanishing minimal volume entropy. 

The $L^2$--torsion $-\rho^{(2)}(\param)$ is an analytic invariant of certain groups that may also play the role of volume.  Indeed, if $M$ is a closed orientable 3--manifold, then L\"uck--Schick proved that $-\rho^{(2)}(\pi_1(M))$ equals $\frac{1}{6\pi}$ times the sum of the volumes of the hyperbolic components in the JSJ decomposition~\cite{ar:LS99}.  By combining the work of Gromov, Soma and Thurston, we have that $\norm{M}$ also equals a constant times the sum of the of the volumes of the hyperbolic components in the JSJ decomposition in this case~\cite{ar:Gromov82,ar:Soma81,un:Thurston78}.  Thus we see that these three notions of volume---minimal volume entropy, $L^2$--torsion, and simplicial volume---are all proportional for closed orientable 3--manifolds, in particular, for mapping tori of homeomorphisms of closed orientable surfaces.

As there is no well-defined fundamental class for a free-by-cyclic group, there is no natural way to define the simplicial volume.  However, it is interesting to compare the minimal volume entropy and the $L^2$--torsion for free-by-cyclic groups.  The second author proved that $-\rho^{(2)}(G_\phi)$ vanishes when $\phi$ is polynomially growing~\cite{ar:Clay17-2}---conjecturally, the converse holds as well.  As Theorem~\ref{thm:FbyZ} shows that most free-by-cyclic groups with polynomially growing monodromy have nonvanishing minimal volume entropy, we see that these two invariants are not proportional in this setting.  The second author provided an upper bound on $-\rho^{(2)}(G_\phi)$ using the dynamics of $\phi$~\cite{ar:Clay17-2}, it would be interesting to find an upper bound on the minimal volume entropy of $G_\phi$ as well.

Theorem~\ref{thm:FbyZ} provides a characterization of free-by-cyclic groups that are virtually tubular.  We note that Button has provided a characterization of tubular groups that are free-by-cyclic~\cite{ar:Button17}.

\medskip \noindent {\it Right-angled Artin groups.}  Let $\Gamma$ be a finite simplicial graph.  The right-angled Artin group $A_\Gamma$ is the group whose generators are the vertices of $\Gamma$ and whose relations are commutations between generators when the vertices are incident on an edge in $\Gamma$.  That is, $A_\Gamma$ is given be the presentation:
\[ A_\Gamma = \I{ V\Gamma \mid vw = wv \text{ if $v$ and $w$ are incident on an edge in $\Gamma$ } }. \]
Right-angled Artin groups, though simple to define, form an essential class of groups in low-dimensional topology and geometric group theory.  Partly, this is due to the suprising richness of their subgroups, their role as an interpolation between free groups and free abelian groups and also the frequency at which they arise as subgroups of geometrically defined groups.

The group $A_\Gamma$ has geometric dimension equal to 2 if and only if $\Gamma$ has no triangles, i.e., $K_3$ is not a subgraph of $\Gamma$.  In this case an $A_\Gamma$--complex, known as the Salvetti complex $S_\Gamma$, is built out of unions of circles $S^1$ and tori $S^1 \times S^1$ that are identified along certain cyclic subgroups.  This structure seems reminiscent of a tubular group, however, not all 2--dimensional right-angled Artin groups are (free products of) tubular groups.  In fact, whether or not a 2--dimensional right-angeled Artin group is tubular is directly related to its minimal volume entropy.  
\begin{theorem}\label{thm:raags}
Suppose that $A_\Gamma$ is a right-angled Artin group with $\gdim(A_\Gamma) = 2$.  The following are equivalent.
\begin{enumerate}
\item $\omega(X) = 0$ for every $A_\Gamma$--complex $X$.\label{item:raag omega = 0}
\item $A_\Gamma$ is a free product of tubular groups and a free group.\label{item:raag tubular}
\item $\Gamma$ is a forest.\label{item:raag forest}
\end{enumerate}  
If none of these conditions hold, then $\uomega(A_\Gamma) \geq \frac{\log 3}{2 \cdot \const}$.
\end{theorem}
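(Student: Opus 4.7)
The plan is to establish the three equivalences as a cycle \eqref{item:raag forest}$\Rightarrow$\eqref{item:raag tubular}$\Rightarrow$\eqref{item:raag omega = 0}$\Rightarrow$\eqref{item:raag forest}, folding the quantitative lower bound into the contrapositive of the final implication. For \eqref{item:raag forest}$\Rightarrow$\eqref{item:raag tubular}, I would decompose the forest $\Gamma$ as a disjoint union $T_1\sqcup\cdots\sqcup T_k\sqcup\{v_1,\dots,v_\ell\}$ of trees with at least one edge and isolated vertices; then $A_\Gamma\cong A_{T_1}\ast\cdots\ast A_{T_k}\ast F_\ell$, and each $A_{T_i}$ is visibly tubular via the graph of groups with $\ZZ^2$ vertex groups $\I{u,v}$ indexed by edges $\{u,v\}$ of $T_i$ and $\ZZ$ edge groups indexed by the non-leaf vertices $w\in V(T_i)$, amalgamated along $\I{w}$.

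For \eqref{item:raag tubular}$\Rightarrow$\eqref{item:raag omega = 0}, each tubular factor admits a natural aspherical $2$--complex whose vertex spaces are tori and whose edge spaces are annuli. Shrinking each torus in the fiber direction (the common edge-group generator) produces piecewise Riemannian metrics with entropy uniformly bounded and volume tending to zero, so $\omega = 0$ on that complex; wedging in a rose for the free factor yields an $A_\Gamma$--complex $X_0$ with $\omega(X_0) = 0$. That $\omega(X) = 0$ for \emph{every} $A_\Gamma$--complex $X$ then follows from the fiber $\pi_1$--growth collapsing half of the Babenko--Sabourau criterion, as refined in Section~\ref{sec:collapsing} to depend only on the underlying group.

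For \eqref{item:raag omega = 0}$\Rightarrow$\eqref{item:raag forest} together with the lower bound, I would argue the contrapositive. If $\Gamma$ is not a forest, then---being triangle-free---its shortest cycle is induced and has length $n\geq 4$; call this cycle $C$, and note that the standard retraction $A_\Gamma\twoheadrightarrow A_C$ realizes $A_C$ as a retract of $A_\Gamma$. Using two non-adjacent vertices of $C$, one produces inside $\ker(\phi)$, for every epimorphism $\phi\from A_\Gamma\to\ZZ$, a pair of non-commuting elements (e.g.\ appropriate commutators or products) generating a rank-$2$ free subgroup with ping-pong dynamics. Combined with the uniform uniform exponential growth of $A_\Gamma$, this verifies the non-collapsing hypothesis of the $2$--dimensional criterion established earlier in the paper, and yields $\omega(X)\geq \tfrac{\log 3}{2\cdot \const}$ for every $A_\Gamma$--complex $X$ of minimal dimension; hence $\uomega(A_\Gamma)\geq \tfrac{\log 3}{2\cdot\const}$ and \eqref{item:raag omega = 0} fails.

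The main obstacle will be the last step: converting the graph-theoretic hypothesis that $\Gamma$ contains a cycle into a \emph{uniform} non-collapsing statement that applies simultaneously to every $2$--dimensional $A_\Gamma$--complex and every epimorphism to $\ZZ$. The $\log 3$ in the bound is the ping-pong entropy lower bound from two non-commuting elements, while the constants $2$ and $\const$ are dictated by the paper's quantitative form of the non-collapsing criterion applied to the Salvetti complex; ensuring that the induced cycle $C$ produces a \emph{robust} obstruction to fiber $\pi_1$--growth collapse along every projection to $\ZZ$, independent of the chosen $A_\Gamma$--complex, is where the substantive work lies.
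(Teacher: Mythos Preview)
Your implications \eqref{item:raag forest}$\Rightarrow$\eqref{item:raag tubular} and \eqref{item:raag tubular}$\Rightarrow$\eqref{item:raag omega = 0} are fine and match the paper's approach (Lemmas~\ref{lem:forest -> tubular} and the easy direction of Theorem~\ref{thm:zero iff graph of groups}). The problem is the last step.

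Your plan for the contrapositive \eqref{item:raag omega = 0}$\Rightarrow$\eqref{item:raag forest} conflates two different collapsing notions. The FNCA (Definition~\ref{def:FNCA}) asks that for every simplicial map $f\from X\to P$ to a \emph{graph} $P$, some fiber has exponentially growing $\pi_1$--image. You instead check kernels of epimorphisms $\phi\from A_\Gamma\to\ZZ$, which is a BNS--type statement. A simplicial map to a graph need not factor through any homomorphism to $\ZZ$; via Proposition~\ref{prop:fca -> graph} such maps correspond to graph-of-groups decompositions of $A_\Gamma$ (actions on trees), not to characters. Producing a free subgroup in every $\ker\phi$ does not by itself rule out the existence of a map to a graph whose fibers all have subexponentially growing $\pi_1$, so the FNCA is not verified and the quantitative bound does not follow from your argument as written. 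The retraction onto $A_C$ is also not obviously usable: retractions do not in general give inequalities between minimal volume entropies, and $A_C$ is not a finite-index subgroup.

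The paper avoids directly verifying the FNCA altogether. It first uses Baudisch's theorem to show $\udelta(A_\Gamma)\geq\log 3$ and that every subexponentially growing subgroup is abelian (Lemma~\ref{lem:uniform uniform raag}), so $A_\Gamma$ has Property~$U$ and the dichotomy of Proposition~\ref{prop:complementary} applies. Thus if $\omega(X)=0$ for every $A_\Gamma$--complex, the FCA holds, and Proposition~\ref{prop:fca -> graph} yields a graph-of-groups decomposition with subexponentially growing vertex and edge groups. Theorem~\ref{thm:zero iff graph of groups} pins these down to $\{\ZZ,\ZZ^2\}$ and $\{\{1\},\ZZ\}$ (using Lemma~\ref{lem:2-subgroups of raags} to exclude $BS(1,-1)$), and Lemma~\ref{lem:omega = 0 -> tubular} massages the decomposition into a free product of tubular groups and a free group. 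Finally Lemma~\ref{lem:tubular -> forest} uses the JSJ decomposition of $A_\Gamma$ to force $\Gamma$ to be a forest. The lower bound is then immediate from the dichotomy in Proposition~\ref{prop:complementary}. In short, the route is FCA $\Rightarrow$ graph of groups $\Rightarrow$ tubular $\Rightarrow$ forest, not a direct FNCA verification via characters.
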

We remark that according to Droms, $A_\Gamma$ is a 3-manifold group exactly when $\Gamma$ is a disjoint union of trees and triangles \cite{ar:Droms87}. Since a triangle corresponds to $\ZZ^3$, $\Gamma$ is a forest exactly when $A_\Gamma$ is a 3--manifold group with geometric dimension at most $2$.  

It seems likely a characterization for the vanishing of minimal volume entropy of  right-angled Artin groups of arbitrary dimension is possible, although the statement may not be so neat.  

\medskip \noindent {\it  Fiber $\pi_1$--growth assumptions.} Both theorems above are consequences of the fiber $\pi_1$--growth collapsing/non-collapsing assumptions of Babenko--Sabourau \cite{un:BS}. These assumptions relate the vanishing of the minimal volume entropy of a simplicial complex $X$ to the existence or non-existence of maps $f \from X \to P$ to lower dimensional complexes, based on the $\pi_1$--growth of fibers.  Briefly, the two assumptions are:
\begin{itemize}
\item Fiber $\pi_1$--Growth Collapsing Assumption (FCA) -- For some simplicial map $f \from X \to P$, every induced subgroup $\pi_1(f^{-1}(x)) \subseteq \pi_1(X)$ is subexponentially growing with subexponential growth rate less than $1 - \frac{\dim P}{\dim X}$.
\item Fiber $\pi_1$--Growth Non-Collapsing Assumption (FNCA) -- There is a constant $\delta$ such that for every simplicial map $f \from X \to P$, some induced subgroup $\pi_1(f^{-1}(x)) \subseteq \pi_1(X)$ is has uniform exponential growth rate at least $\delta$.
\end{itemize}

Although the two criteria are not \emph{a priori} complementary, we show that they are in the case of free-by-cyclic groups and 2--dimensional right-angled Artin groups.   Moreover, the two assumptions are complementary when the fundamental group of $X$ has uniform uniform exponential growth and satisfies a technical condition on subexponentially growing subgroups (see Definition~\ref{def:property U} and Proposition~\ref{lem:complementary}).

When $X$ is a 2--dimensional simplicial complex, the fiber $\pi_1$--growth assumptions consider maps $f \from X \to P$ where $P$ is a finite simplicial graph.  Applying standard geometric group theoretic techniques, when $X$ satisfies the FCA there is an induced graph of groups decomposition on $\pi_1(X)$ where the vertex and edge groups are all subexponentially growing (Proposition~\ref{prop:fca -> graph}). 

When a group $G$ has uniform uniform exponential growth (denoted by $\udelta(G) > 0$ in the following), we prove the following vanishing criteria:

\begin{theorem}\label{thm:zero iff graph of groups}
Let $G$ be a group with $\gdim(G) = 2$. Suppose $\udelta(G)>0$ and that the subexponentially growing subgroups of $G$ belong to the collection $\{\{1\}, \ZZ, \ZZ
^2, BS(1,-1) \}$.  Then $\omega(X) = 0$ for every $G$--complex $X$ if and only if $G$ is the fundamental group of a graph of groups where the edge groups belong to the collection $\{ \{ 1 \},\ZZ \}$ and the vertex groups belong to the collection $\{\ZZ, \ZZ^2, BS(1,-1) \}$. 
\end{theorem}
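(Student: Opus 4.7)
The plan is to prove the two directions separately, using the complementarity of the fiber $\pi_1$--growth assumptions as the main bridge.

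For the backward direction, suppose $G$ is the fundamental group of such a graph of groups with underlying graph $P$. Let $X$ be any $G$--complex. The graph of groups structure yields a surjection $G \to \pi_1(P)$ (the quotient by the normal closure of the vertex groups); since $P$ is a $K(\pi_1(P),1)$, this homomorphism is realized by a continuous map $X \to P$, and after subdividing $X$ we may take it to be simplicial. The preimages over vertices and edge-midpoints of $P$ are subcomplexes whose fundamental groups are conjugate into the vertex and edge groups respectively, hence lie in $\{\{1\}, \ZZ, \ZZ^2, BS(1,-1)\}$. All such groups are of polynomial growth, so their subexponential growth rate is $0$, which is strictly less than $1 - \dim P/\dim X = 1/2$. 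Thus $f$ satisfies the FCA, and the Babenko--Sabourau collapsing result yields $\omega(X) = 0$.

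For the forward direction, fix a $G$--complex $X$ with $\dim X = \gdim(G) = 2$. Because $\udelta(G)>0$ and the subexponentially growing subgroups of $G$ are confined to the list $\{\{1\}, \ZZ, \ZZ^2, BS(1,-1)\}$, Proposition~\ref{lem:complementary} applies and guarantees that the FCA and FNCA are complementary for simplicial maps $X \to P$ to graphs. If the FNCA held, then by the non-collapsing result of Babenko--Sabourau we would have $\omega(X)>0$, contradicting our hypothesis. Therefore the FCA holds for some simplicial map $f \from X \to P$, and Proposition~\ref{prop:fca -> graph} produces a graph of groups decomposition of $G$ in which every vertex and edge group is subexponentially growing. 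By the restriction on subexponentially growing subgroups, each such group belongs to $\{\{1\}, \ZZ, \ZZ^2, BS(1,-1)\}$. Since edge groups arise as fundamental groups of fibers over edge midpoints of $P$, which are $1$--dimensional subcomplexes of $X$, they are free; being subexponentially growing free groups they lie in $\{\{1\}, \ZZ\}$. A standard manipulation of the graph of groups (collapsing an edge incident to a vertex with trivial vertex group, splitting off free factors when necessary) lets us arrange that all remaining vertex groups lie in $\{\ZZ, \ZZ^2, BS(1,-1)\}$ without changing $G$.

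I expect the main obstacle to be establishing, in the backward direction, that the map $f \from X \to P$ built from the group-theoretic decomposition really does have fibers whose fundamental groups land in the hypothesis list for an \emph{arbitrary} $G$--complex $X$, not just a model graph of spaces. The approach is to construct a model $G$--complex $X_0$ from the decomposition that comes equipped with the required map to $P$, then transfer the map to $X$ via a homotopy equivalence $X \to X_0$ (using that both are aspherical with fundamental group $G$) and apply simplicial approximation. One must verify that simplicial approximation does not destroy the fiber structure; this uses that the vertex and edge subcomplexes of $X_0$ are themselves aspherical and that their fundamental groups are preserved under the construction.
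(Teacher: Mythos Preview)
Your backward direction is essentially correct once you carry out the workaround sketched in your final paragraph, and that workaround is exactly what the paper does: construct the model graph of spaces $X_0 = \abs{\calX}$ with its built-in projection to the underlying graph, observe that $X_0$ satisfies the FCA so $\omega(X_0)=0$, and then invoke Proposition~\ref{prop:mve vanishes} to conclude $\omega(X)=0$ for \emph{every} $G$--complex.  There is no need to transfer the FCA-witnessing map itself to an arbitrary $X$ or to worry about fibers under simplicial approximation; the vanishing of $\omega$ transfers directly.

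There is, however, a genuine gap in your forward direction.  You argue that the edge groups lie in $\{\{1\},\ZZ\}$ because fibers over edge midpoints are $1$--dimensional, hence have free fundamental group.  But the edge group is not $\pi_1$ of the fiber: it is the \emph{image} of $\pi_1(\text{fiber})$ in $\pi_1(X)\cong G$, and every group is a quotient of a free group, so freeness of the fiber tells you nothing about its image.  Concretely, a wedge of two circles can map to a torus hitting both generators of $\ZZ^2$.  The paper's argument is entirely different: pass to a reduced graph of groups, and then observe that if an edge group were $\ZZ^2$ or $BS(1,-1)$, the amalgamated product or HNN-extension over that edge would have cohomological (hence geometric) dimension~$3$ by a direct Mayer--Vietoris computation (Lemma~\ref{lem:some subgroups}), contradicting $\gdim(G)=2$.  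The same reduction also disposes of trivial vertex groups: in a reduced decomposition, a vertex with trivial group has only loop edges, forcing $G$ to be free and again contradicting $\gdim(G)=2$.
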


\begin{remark}\label{rem:zero divisor conjecture}
The Baumslag--Solitar group $BS(1,-1) = \I{a,t \mid tat^{-1} = a^{-1}}$ is the fundamental group of the Klein bottle.  By a result of Degrijse, if $G$ has cohomological dimension equal to 2, has subexponential growth and the group algebra $\CC[G]$ does not have zero-divisors, then $G$ is either $\ZZ^2$ or $BS(1,-1)$~\cite[Theorem~B]{un:Degrijse}.  Conjecturally, if $G$ is torsion-free then $\CC[G]$ does not contain any zero-divisors, which would render this hypothesis unnecessary.  Therefore conjecturally, Theorem~\ref{thm:zero iff graph of groups} applies to any group with $\gdim(G) = 2$ and $\udelta(G) > 0$.
\end{remark}

Theorem~\ref{thm:zero iff graph of groups} applies to the case of groups $G$ with $\gdim(G) = 2$ that act freely and cocompactly on CAT(0) cube complexes with isolated flats by recent work of Gupta--Jankiewicz--Ng~\cite{un:GJN}.

\subsection{Outline of paper} In Section \ref{sec:entropy and volume}, we discuss notions of growth in groups and show that the vanishing of $\omega(X)$ is a homotopy invariant of $G$--complexes of minimal dimension.  Section \ref{sec:collapsing} recalls the fiber $\pi_1$--growth collapsing/non-collapsing assumptions of Babenko--Sabourau~\cite{un:BS} and proves that these are complementary when $G$ has Property $U$.  After briefly reviewing graphs of groups, in Section \ref{sec:vanishing criterion} we prove Theorem~\ref{thm:zero iff graph of groups}% and prove Theorem~\ref{thm:cat 0} as an application
. In Section~\ref{sec:FbyZ} we prove Theorem~\ref{thm:FbyZ} regarding the minimal volume entropy of free-by-cyclic groups.  Finally, in Section~\ref{sec:raags}, we prove Theorem~\ref{thm:raags} regarding the minimal volume entropy of 2--dimensional right-angled Artin groups. 

\subsection{Acknowledgements} 

The authors would like to thank Derrick Wigglesworth for discussions regarding Lemma~\ref{lem:uniform uniform FbyZ}.  We also thank the Babenko and Sabourau and also the referee for pointing out an error in a previous version of the proof of Theorem~\ref{thm:FCA no volume}.  The first author is supported by NSF grants No.~DMS-1906269 and~DMS-2052801.  The second author is supported by Simons Foundation Grant No.~316383.

%%%%%%%%%%%%%%%%%%%%%%%%%%%%%%%%%%%%%%%%%%%%%%%%%%%%%%%%%%%%%%%%%%%%%%%%%%%%%

\section{Entropy and volume in groups}\label{sec:entropy and volume}

In this section discuss growth in groups and the relation between the minimal volume entropy of a group and the minimal volume entropy of a finite index subgroup.

\subsection{Growth in groups}\label{subsec:growth}

Let $G$ be a finitely generated group and suppose that $S \subset G$ is a finite generating set.  For an element $h \in G$, by $\norm{h}_S$ we denote the word length of $h$ with respect to $S$.  

The \emph{growth rate of $G$ with respect to $S$} is the quantity
\[ \delta(G,S) = \lim_{t \to \infty} \frac{1}{t} \log \#\{ h \in G \mid \norm{h}_S \leq t  \}.  \]  We observe that if $X$ is the Cayley graph of $G$ with respect to the generating set $S$ and $g$ is the piecewise Riemannian metric on $X$ for which each edge of $X$ is isometric to the unit interval, then $\entropy(X,g) = \delta(G,S)$.  If $\delta(G,S) > 0$ for some finite generating set $S \subset G$, then it is known that $\delta(G,S') > 0$ for all finite generating sets $S' \subset G$.  In this case, the group $G$ is said to have \emph{exponential growth}.  Else, the group is said to have \emph{subexponential growth}.  

In the case of subexponential growth, we consider the \emph{subexponential growth rate of $G$} which is defined by
\[ \nu(G) = \lim_{t \to \infty} \frac{\log \log \#\{ h \in G \mid \norm{h}_S \leq t  \}}{\log t}.  \]
This quantity satsifies $0 \leq \nu(G) \leq 1$.  We remark that if $\#\{ h \in G \mid \norm{h}_S \leq t  \}$ is bounded by a polynomial, then $\nu(G) = 0$.

In the case of exponential growth, to get a quantity that is independent of the generating set, we can take the infimum.  This leads to the \emph{uniform growth rate of $G$} which is defined by
\[ \delta(G) = \inf_S \delta(G,S) \] where $S$ runs over all finite generating sets for $S$.  The group $G$ is said to have \emph{uniform exponential growth} if $\delta(G) > 0$.  There are examples of finitely generated groups with exponential growth, but not uniform exponential growth~\cite{ar:Wilson04}.

Taking this concept one step further, we can take the infimum over all finitely generated exponentially growing subgroups of $G$ as well.  This leads to the \emph{uniform uniform growth rate of $G$} which is defined by
\[ \udelta(G)  = \inf_H \delta(H) \] where $H$ runs over all finitely generated subgroups of $G$ with exponential growth.  The group $G$ is said to have \emph{uniform uniform exponential growth} if $\udelta(G) > 0$.  

Finally, the following property is relevant to the sequel.

\begin{definition}\label{def:property U}
A finitely generated group $G$ has \emph{Property U} if $\udelta(G) > 0$ and if $H$ is a subgroup of $G$ with subexponential growth, then $\nu(H) = 0$.
\end{definition}

In particular, if $\udelta(G) > 0$ and every subexponentially growing subgroup of $G$ has polynomial growth, then $G$ has Property $U$.

\subsection{Monotone Maps and Finite Index Subgroups}\label{subsec:mve}

Let $X$ and $Y$ be simplicial complexes with $\dim(X) = \dim(Y) = m$.  A simplicial map $f \from Y \to X$ is said to be \emph{$n$--monotone} for $n\geq 0$ if the preimage of any open $m$--simplex in $X$ consists of at most $n$ open $m$--simplices in $Y$.  The following lemma gives a relation between the minimal volume entropy of $X$ and $Y$ using a $n$--monotone map $f \from Y \to X$.  This lemma appears in a paper by Brunnbauer~\cite[Lemma~4.1]{ar:Brunnbauer08}.  The proof is attributed to Babenko~\cite{ar:Babenko92} and appears in a paper by  Sabourau~\cite[Lemma~3.5]{ar:Sabourau06}. 

\begin{lemma}\label{lem:comparison}
Let $f \from Y \to X$ be an $n$--monotone map between $m$--dimensional finite simplicial complexes. If $f_* \from \pi_1(Y) \to \pi_1(X)$ is injective, then \[ n^{1/m} \cdot \omega(X) \geq \omega(Y).\] 
\end{lemma}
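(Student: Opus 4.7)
The plan is to pull back a piecewise Riemannian metric $g$ on $X$ via $f$ and show $\omega(Y, f^*g) \leq n^{1/m} \, \omega(X, g)$; taking the infimum over $g$ then yields the lemma. Since $f^*g$ may only be a pseudo-metric---it degenerates on any $m$-simplex of $Y$ that $f$ collapses to a lower-dimensional cell---I will work with the Riemannian perturbation $f^*g + \epsilon g_0$ for some auxiliary piecewise Riemannian metric $g_0$ on $Y$ and let $\epsilon \to 0$ at the end.

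First I would establish the volume comparison. Since $f$ is simplicial, its restriction to an open $m$-simplex $\sigma \subset Y$ either collapses dimension (in which case $f^*g$ contributes zero volume) or is an affine homeomorphism onto an $m$-simplex of $X$, and by definition of the pullback this restriction is an isometry. Because each open $m$-simplex of $X$ has at most $n$ non-collapsed preimages, summing over simplices of $Y$ yields $\vol(Y, f^*g) \leq n \cdot \vol(X, g)$.

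The entropy comparison is the main step and is where the $\pi_1$-injectivity is used. Let $\hat{X} \to X$ be the cover corresponding to the subgroup $f_*(\pi_1(Y)) \leq \pi_1(X)$, so $f$ lifts to a map $\hat{f} \from Y \to \hat{X}$ that induces an isomorphism on $\pi_1$. Since monotonicity is local and is preserved by covering, $\hat{f}$ remains $n$-monotone. Lifting further to universal covers gives a $\pi_1(Y)$-equivariant map $\tilde{f} \from \tilde{Y} \to \widetilde{\hat{X}} = \tilde{X}$, which I claim is still $n$-monotone: an open $m$-simplex of $\tilde{X}$ projects to one in $\hat{X}$ with at most $n$ preimages in $Y$ under $\hat{f}$, and since $\hat{f}_*$ is an isomorphism, the deck transformation actions match up bijectively so that each such preimage in $Y$ has exactly one lift in $\tilde{Y}$ whose image is the prescribed simplex. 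The pullback metric makes $\tilde{f}$ an isometry on each non-collapsed simplex and hence $1$-Lipschitz overall, so $\tilde{f}(B_{y_0}(t)) \subseteq B_{\tilde{f}(y_0)}(t)$. Combining this containment with $n$-monotonicity of $\tilde{f}$ gives
\[ \vol\bigl(B_{y_0}(t), \widetilde{f^*g}\bigr) \;\leq\; n \cdot \vol\bigl(B_{\tilde{f}(y_0)}(t), \tilde{g}\bigr), \]
and taking $\frac{1}{t} \log$ as $t \to \infty$ yields $\entropy(Y, f^*g) \leq \entropy(X, g)$.

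Multiplying the entropy inequality by $\vol(Y, f^*g)^{1/m} \leq n^{1/m} \vol(X, g)^{1/m}$ produces $\omega(Y, f^*g) \leq n^{1/m} \, \omega(X, g)$. The degeneracy of $f^*g$ is handled by replacing it throughout with $f^*g + \epsilon g_0$; the bounds above degrade by quantities that vanish as $\epsilon \to 0$, so the inequality survives in the limit, and taking the infimum over $g$ yields $\omega(Y) \leq n^{1/m} \omega(X)$. The principal technical obstacle is verifying that $\tilde{f}$ remains $n$-monotone after lifting to universal covers; this is exactly where $\pi_1$-injectivity enters, as the $\pi_1$-isomorphism $\hat{f}$ is what makes the deck transformation orbits of simplex preimages correspond bijectively.
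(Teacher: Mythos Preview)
The paper does not prove this lemma itself; it merely cites Brunnbauer, Sabourau, and Babenko for the proof. Your argument is correct and is precisely the standard one from those references: pull back a metric on $X$, compare volumes via $n$-monotonicity, compare entropies via the $1$-Lipschitz equivariant lift to universal covers, and handle the degeneracy of $f^*g$ with an $\epsilon$-perturbation.

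Two minor remarks. First, with $g_\epsilon = f^*g + \epsilon g_0$ the map $f \colon (Y,g_\epsilon) \to (X,g)$ is still $1$-Lipschitz, so the entropy inequality $\entropy(Y,g_\epsilon) \le \entropy(X,g)$ holds for every $\epsilon>0$ without degradation; only the volume bound needs the limit $\epsilon\to 0$. Second, your careful check that $\tilde f$ remains $n$-monotone is valid, but it is more than is strictly needed for the entropy comparison: once $\tilde f$ is $\pi_1(Y)$-equivariant and distance non-increasing, one can count orbit points directly, since $d_{\tilde Y}(\gamma y_0,y_0)\le t$ forces $d_{\tilde X}(f_*(\gamma)x_0,x_0)\le t$, and this already yields $\entropy(Y,g_\epsilon)\le\entropy(X,g)$.
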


There are two useful consequences of this bound.

\begin{proposition}\label{prop:essential he invariant}
Let $G$ be a group of finite type and let $X$ and $Y$ be $G$--complexes with $\dim(X) = \dim(Y)$.  Then $\omega(X) = 0$ if and only if $\omega(Y) = 0$. 
\end{proposition}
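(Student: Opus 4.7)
The plan is to apply Lemma~\ref{lem:comparison} in both directions between $X$ and $Y$. Since both complexes are aspherical with fundamental group isomorphic to $G$, they are $K(G,1)$ spaces, and by classical Eilenberg--MacLane theory any isomorphism $\pi_1(Y) \to \pi_1(X)$ is realized by a continuous map $\phi \from Y \to X$, unique up to homotopy. Fixing the identifications $\pi_1(X) \cong G \cong \pi_1(Y)$, we obtain $\phi \from Y \to X$ inducing the identity on $G$, and similarly $\psi \from X \to Y$ in the reverse direction.

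Next, I would pass to a simplicial approximation: after iterated barycentric subdivision of $Y$, producing $Y'$, the map $\phi$ is homotopic to a simplicial map $f \from Y' \to X$. Since $\phi$ induces an isomorphism on $\pi_1$, so does $f$, and in particular $f_* \from \pi_1(Y') \to \pi_1(X)$ is injective. The complex $Y'$ still has dimension $m = \dim(Y) = \dim(X)$, and because $Y'$ has only finitely many $m$--simplices, the preimage under $f$ of any open $m$--simplex in $X$ consists of at most $n$ open $m$--simplices in $Y'$, where $n$ is the total number of $m$--simplices of $Y'$. Hence $f$ is $n$--monotone.

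Now Lemma~\ref{lem:comparison} applies and yields $\omega(Y') \leq n^{1/m}\,\omega(X)$. Since subdivision does not alter the minimal volume entropy (every piecewise Riemannian metric on $Y$ is a piecewise Riemannian metric on $Y'$ with respect to its finer triangulation, and conversely any piecewise Riemannian metric on $Y'$ is one on $Y$ relative to a finer triangulation), we have $\omega(Y) = \omega(Y') \leq n^{1/m}\,\omega(X)$. Consequently, $\omega(X) = 0$ implies $\omega(Y) = 0$. Repeating the argument with the roles of $X$ and $Y$ reversed, using $\psi$ in place of $\phi$, gives the converse implication.

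The proposition is essentially a direct consequence of Lemma~\ref{lem:comparison}; the only point that requires any care is verifying that a simplicial approximation to a homotopy equivalence between same-dimensional finite complexes is automatically $n$--monotone for some $n$, which is immediate from finiteness. Ensuring the existence of the continuous maps $\phi$ and $\psi$ between the two $K(G,1)$'s is standard and causes no difficulty.
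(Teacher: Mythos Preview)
Your proof is correct and follows essentially the same route as the paper's: both construct maps in each direction inducing $\pi_1$--isomorphisms, pass to simplicial approximations, observe that finiteness forces these to be $n$--monotone for some $n$, and invoke Lemma~\ref{lem:comparison}. The only difference is cosmetic---you spell out the Eilenberg--MacLane argument for existence of the maps and make explicit the subdivision step (and the fact that $\omega$ is unaffected by subdivision), while the paper simply asserts the existence of a homotopy equivalence and absorbs subdivision into the phrase ``we may assume that both of these maps are simplicial.''
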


\begin{proof}
Let $f \from X \to Y$ be a homotopy equivalence and let $f' \from Y \to X$ be the homotopy inverse to $f$. By the simplicial approximation theorem, we may assume that both of these maps are simplicial.  Let $m$ denote the common dimension of $X$ and $Y$.  By finiteness, each $m$--simplex of $Y$ has at most $n$ preimages for some $n>0$ under $f$. Similarly, there exists $n'>0$ such that each $m$--simplex of $X$ has at most $n'$ preimages under $f'$.  The proposition now follows from Lemma \ref{lem:comparison}, since $n$ and $n'$ are both positive.
\end{proof}

We record the following corollary of Proposition~\ref{prop:essential he invariant}.

\begin{corollary}\label{co:mve vanishes}
Let $G$ be a group of finite type and suppose that $\omega(X) = 0$ for some $G$--complex with $\dim(X) = \gdim(G)$.  Then $\omega(X) = 0$ for every $G$--complex $X$ with $\dim(X) = \gdim(G)$. 
\end{corollary}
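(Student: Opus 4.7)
The proof is an immediate specialization of Proposition~\ref{prop:essential he invariant}. By hypothesis, there is at least one $G$--complex $X_0$ with $\dim(X_0) = \gdim(G)$ satisfying $\omega(X_0) = 0$. Given any other $G$--complex $X$ with $\dim(X) = \gdim(G)$, we have $\dim(X) = \dim(X_0)$, so the biconditional in Proposition~\ref{prop:essential he invariant} applied to the pair $(X,X_0)$ yields $\omega(X) = 0$ as well. No further argument is required; the corollary merely repackages the proposition as a statement about the minimal geometric dimension and thereby isolates a property of the group $G$ itself.

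The only subtlety, which is already absorbed into the statement of Proposition~\ref{prop:essential he invariant}, is that two $G$--complexes of the same dimension are necessarily homotopy equivalent: each is aspherical with fundamental group isomorphic to $G$, hence a $K(G,1)$, and such spaces are homotopy equivalent by the classical uniqueness of Eilenberg--MacLane spaces. Simplicial approximation then promotes the homotopy equivalence and its inverse to simplicial maps, each of which is $n$--monotone for some finite $n$ by compactness, and Lemma~\ref{lem:comparison} applied in both directions gives a two-sided comparison of $\omega(X)$ and $\omega(X_0)$ that forces simultaneous vanishing. Since the proposition is already in hand, there is no genuine obstacle here; the takeaway is that the vanishing of $\omega$ on $G$--complexes of minimal dimension is an invariant of $G$, which justifies the notation $\uomega(G)=0$ employed in the sequel.
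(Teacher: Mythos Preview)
Your proposal is correct and matches the paper's treatment exactly: the paper presents Corollary~\ref{co:mve vanishes} with no proof at all, simply recording it as an immediate consequence of Proposition~\ref{prop:essential he invariant}. Your second paragraph, while accurate, merely recapitulates the proof of that proposition and is not needed here.
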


The other useful consequence is with regards to finite index subgroups.

\begin{proposition}\label{prop:finite index}
Suppose that $G$ is a group of finite type.  If $H$ is a subgroup of $G$ and $[G:H] = n$, then \[n^{1/\gdim(G)} \cdot \uomega(G) \geq \uomega(H).\]
\end{proposition}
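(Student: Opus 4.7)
The plan is to pass to the $n$--sheeted cover of a $G$--complex associated to $H$ and apply Lemma~\ref{lem:comparison}. Let $X$ be any $G$--complex with $\dim(X) = \gdim(G)$, and let $p \from Y \to X$ denote the covering corresponding to the subgroup $H \leq \pi_1(X) \cong G$. Since $[G:H] = n < \infty$, the cover $Y$ is a finite space; equipping $Y$ with the simplicial structure lifted from $X$ makes it a finite simplicial complex of dimension $\gdim(G)$. As a cover of an aspherical complex, $Y$ is itself aspherical, and $\pi_1(Y) \cong H$, so $Y$ is an $H$--complex.

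Next I would verify that $p$ satisfies the hypotheses of Lemma~\ref{lem:comparison}. Each open top-dimensional simplex of $X$ is evenly covered, and hence has exactly $n$ preimages in $Y$; thus $p$ is $n$--monotone. The induced map $p_* \from \pi_1(Y) \to \pi_1(X)$ is the inclusion $H \hookrightarrow G$ and is therefore injective. Lemma~\ref{lem:comparison} then yields
\[ n^{1/\gdim(G)} \cdot \omega(X) \geq \omega(Y). \]
Provided $\gdim(H) = \gdim(G)$, the complex $Y$ is admissible in the infimum defining $\uomega(H)$, so $\omega(Y) \geq \uomega(H)$; taking the infimum over all such $X$ then delivers the claimed bound.

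The main point requiring care is the dimension equality $\gdim(H) = \gdim(G)$. The inequality $\gdim(H) \leq \gdim(G)$ is immediate from the existence of $Y$, and the reverse inequality follows from standard cohomological dimension arguments: for torsion-free $G$ of finite geometric dimension, Serre's theorem on cohomological dimension of finite-index subgroups gives $\mathrm{cd}(H) = \mathrm{cd}(G)$, which combined with $\gdim = \mathrm{cd}$ outside the exceptional Eilenberg--Ganea range yields $\gdim(H) = \gdim(G)$. In particular, this equality holds in the two-dimensional situations relevant to Theorems~\ref{thm:FbyZ} and~\ref{thm:raags}, so no further hypothesis is needed for the applications of the proposition.
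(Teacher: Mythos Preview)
Your argument is correct and follows essentially the same route as the paper: pass to the $n$--sheeted cover corresponding to $H$, observe that the covering map is $n$--monotone and $\pi_1$--injective, apply Lemma~\ref{lem:comparison}, and take the infimum over $G$--complexes of minimal dimension. The only difference is that the paper simply asserts $\gdim(H)=\gdim(G)$ without comment, whereas you supply the justification via Serre's theorem and the Eilenberg--Ganea considerations.
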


\begin{proof}
Let $G$ and $H$ be as in the statement.  As $G$ has finite type, so does $H$ and moreover $\gdim(G) = \gdim(H)$.

Suppose that $X$ is a $G$--complex with $\dim(X) = \gdim(G)$.  Let $f \from Y \to X$ be the cover corresponding to the subgroup $H$.  Then $f$ is $n$--monotone and $f_* \from \pi_1(Y) \to \pi_1(X)$ is injective.  Hence by Lemma~\ref{lem:comparison} we have that
\[ n^{1/\gdim(G)} \cdot \omega(X) \geq \omega(Y).\] 
As $X$ is an arbitrary $G$--complex with $\dim(X) = \gdim(G)$ and $\omega(Y) \geq \uomega(H)$ for any $H$--complex $Y$ with $\dim(Y) = \gdim(H)$, the result follows.
\end{proof}

In particular, if $\uomega(H) > 0$ for some finite index of a group $G$ of finite type, then $\uomega(G) > 0$ as well.

%%%%%%%%%%%%%%%%%%%%%%%%%%%%%%%%%%%%%%%%%%%%%%%%%%%%%%%%%%%%%%%%%%%%%%%%%%%%%

\section{Fiber \texorpdfstring{$\pi_1$--growth}{pi\_1-growth}  assumptions}\label{sec:collapsing}

In this section we recall the fiber $\pi_1$--growth collapsing and non-collapsing assumptions, introduced by Babenko--Sabourau~\cite{un:BS}. The collapsing assumption provides a sufficient condition for the minimal volume entropy to vanish, while the non-collapsing assumption guarantees it is nonzero, and also provides a lower bound. 

\subsection{Fiber \texorpdfstring{$\pi_1$--growth}{pi\_1-growth} collapsing assumption}

First we discuss the collapsing assumption.  Let $X$ be a simplicial complex.  A closed subset $F \subseteq X$ has \emph{subexponential growth} if for every connected component $F_0\subseteq F$, the inclusion induced image of $\pi_1(F_0)$ in $\pi_1(X)$ has subexponential growth.  

\begin{definition}[Babenko--Sabourau \cite{un:BS}]\label{def:FCA} 
A simplicial complex $X$ of dimension $m$ satisfies the \emph{fiber $\pi_1$--growth collapsing assumption} (FCA) if there exists a simplicial map $f \from X \to P$ to a finite simplicial complex $P$ of dimension $k$ such that for every $p \in P$, the fiber $f^{-1}(p)$ has subexponential growth with subexponential growth rate less than $\frac{m-k}{m}$.
\end{definition}

Babenko--Sabourau prove that the FCA is sufficient to ensure that the minimal volume entropy vanishes.  We will provide a proof of a weaker version that is sufficient for our needs.  Namely, we will show that satisfying the FCA with the stronger assumption that the subexponential growth rate of the fibers is less than $1/\dim X$ implies that the minimal volume entropy vanishes.  Our proof is based on their outline but the assumption about the subexponential growth rate of the fibers simplifies the argument.  

To do this, we need some facts about subexponential functions.  Let $\phi \from [0,\infty) \to [0,\infty)$ be a continuous, non-decreasing, subexponential function. By definition, for every $0 < \lambda \leq 1$, there exists $T_\lambda \in [0,\infty)$ such that for all $t\geq T_\lambda$
\[\phi(t)\leq \exp(\lambda t).\] We may assume $T_\lambda$ is the largest $t$ such that $\phi(t)=\exp(\lambda t)$. The next lemma describes the dependence of $T_\lambda$ on $\lambda$ as $\lambda\to 0^+$.

\begin{lemma}\label{lem:MovingZero}
Suppose that $\phi \from [0,\infty) \to [0,\infty)$ is a continuous, non-decreasing, subexponential function and that $\phi(t_0) > 1$ for some $t_0 \in (0,\infty)$.  The following statements hold.
\begin{enumerate}
\item The function $\lambda \mapsto T_\lambda$ is continuous and strictly decreasing on $(0,\lambda_0]$ where $\lambda_0 = \min\{1,\frac{1}{t_0}\log \phi(t_0)\}$.\label{item:1}
\item $\lim_{\lambda \to 0^+} T_\lambda = \infty$.\label{item:2}
\item The inverse function $t \mapsto \Lambda_t$ on $[t_0,\infty)$ defined by $T_{\Lambda_t} = t$ satisfies
\[ \limsup_{t \to \infty} \frac{\Lambda_t}{t^{\nu+\epsilon-1}} \leq 1 \] for any $\epsilon > 0$ where $\nu$ is the subexponential growth rate of $\phi(t)$.\label{item:3}
\end{enumerate}
\end{lemma}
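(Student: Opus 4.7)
The plan is to analyze the auxiliary continuous function $g_\lambda(t) := \phi(t) - \exp(\lambda t)$, whose zero set determines $T_\lambda$. I would begin by observing that for $\lambda \in (0, \lambda_0]$ the inequality $\lambda \leq \frac{1}{t_0}\log \phi(t_0)$ rearranges to $g_\lambda(t_0) \geq 0$, while subexponentiality of $\phi$ forces $g_\lambda(t) < 0$ for all sufficiently large $t$. Continuity of $g_\lambda$ and the intermediate value theorem then produce a nonempty, closed, bounded set of zeros in $[t_0, \infty)$, whose maximum is exactly $T_\lambda \geq t_0$; by maximality, $g_\lambda$ is strictly negative on $(T_\lambda, \infty)$.

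For part (1), strict monotonicity is immediate: for $\lambda_1 < \lambda_2$ in $(0, \lambda_0]$, one computes
\[ g_{\lambda_2}(T_{\lambda_1}) = \exp(\lambda_1 T_{\lambda_1}) - \exp(\lambda_2 T_{\lambda_1}) < 0, \]
since $T_{\lambda_1} \geq t_0 > 0$, and hence $T_{\lambda_2} < T_{\lambda_1}$. Continuity, which I anticipate as the main obstacle, reduces via monotonicity to ruling out jump discontinuities. For $\lambda_n \to \lambda$ in $(0, \lambda_0]$, the monotone sequence $T_{\lambda_n}$ has a limit $T_*$, and joint continuity of $(\lambda, t) \mapsto g_\lambda(t)$ yields $g_\lambda(T_*) = 0$, so $T_* \leq T_\lambda$; the reverse inequality follows from monotonicity when $\lambda_n \nearrow \lambda$. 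For the subtler case $\lambda_n \searrow \lambda$, the reverse inequality is obtained by combining the strict negativity of $g_\lambda$ beyond $T_\lambda$ with the uniform convergence $g_{\lambda_n} \to g_\lambda$ on compact sets to preclude $T_* < T_\lambda$.

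For part (2), since $T_\lambda \geq t_0$ and $\phi$ is non-decreasing,
\[ \exp(\lambda T_\lambda) = \phi(T_\lambda) \geq \phi(t_0) > 1, \]
so $T_\lambda \geq \tfrac{1}{\lambda}\log \phi(t_0)$, which tends to $\infty$ as $\lambda \to 0^+$. For part (3), the defining relation $\phi(t) = \exp(\Lambda_t\, t)$ inverts directly to $\Lambda_t = \tfrac{1}{t}\log \phi(t)$, and by the definition of the subexponential growth rate $\nu$, for any $\epsilon > 0$ we have $\log \phi(t) \leq t^{\nu + \epsilon/2}$ for all $t$ sufficiently large. Consequently
\[ \frac{\Lambda_t}{t^{\nu + \epsilon - 1}} = \log \phi(t) \cdot t^{-\nu - \epsilon} \leq t^{-\epsilon/2}, \]
which tends to $0$ as $t \to \infty$, a bound strictly stronger than the claimed limit-supremum bound of~$1$.
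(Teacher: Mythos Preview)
Your approach mirrors the paper's closely: you work with the same auxiliary function (their $h_\lambda$ is your $-g_\lambda$), locate $T_\lambda$ via the intermediate value theorem, and your treatment of part~(3) is essentially identical to theirs. Your argument for part~(2) is in fact more direct than the paper's --- you extract the explicit lower bound $T_\lambda \geq \lambda^{-1}\log\phi(t_0)$, whereas the paper argues by contradiction.

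Two steps need attention. In the monotonicity argument, from $g_{\lambda_2}(T_{\lambda_1}) < 0$ alone you cannot infer $T_{\lambda_2} < T_{\lambda_1}$: strict negativity of $g_{\lambda_2}$ on $(T_{\lambda_2},\infty)$ does not say that every point where $g_{\lambda_2}$ is negative lies in that ray. The paper instead computes (in your notation) $g_{\lambda_1}(T_{\lambda_2}) > 0$, after which eventual negativity of $g_{\lambda_1}$ forces a zero beyond $T_{\lambda_2}$; alternatively, you can rescue your own line by noting $g_{\lambda_2}(t) < g_{\lambda_1}(t) \leq 0$ for all $t \geq T_{\lambda_1}$, so $g_{\lambda_2}$ has no zero there. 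More seriously, your right-continuity sketch does not work as written. For $\lambda_n \searrow \lambda$ with putative limit $T_* < T_\lambda$, the ``strict negativity of $g_\lambda$ beyond $T_\lambda$'' concerns the interval $(T_\lambda,\infty)$, which is disjoint from where $T_*$ sits, and uniform convergence of $g_{\lambda_n}$ to $g_\lambda$ only yields $g_\lambda \leq 0$ on $[T_*, T_\lambda]$ --- no contradiction arises. This is a genuinely delicate point: if $\phi$ happens to coincide with $t \mapsto \exp(\lambda t)$ on an entire interval, $T_\lambda$ can actually jump as $\lambda$ crosses that value from above, so some further argument (or hypothesis) is needed. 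The paper's own one-sentence appeal to joint continuity of $h_\lambda(t)$ is, if anything, less complete than your sketch on this issue.
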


\begin{proof}
Consider $h_\lambda(t)=\exp(\lambda t)-\phi(t)$, so that $T_\lambda$ is the largest $t$ for which $h_\lambda(t) = 0$.  Notice that $h_\lambda(t) \geq 0$ for $t \geq T_\lambda$ as well by definition.   Clearly $h_\lambda(t)$ is continuous in both $\lambda$ and $t$.  This implies that $T_\lambda$ is continuous as a function of $\lambda$.  

If $0 < \lambda \leq \lambda_0$, then $\exp(\lambda t_0) \leq \exp(\lambda_0 t_0) \leq \phi(t_0)$ which implies that $T_\lambda \geq t_0 > 0$.  Hence, for $0 < \lambda < \mu \leq \lambda_0$, as $\exp(\lambda t)<\exp(\mu t)$ for $t > 0$, we find that $h_\lambda(T_\mu) < h_\mu(T_\mu) = 0$.  Thus $T_\lambda > T_\mu$.  This shows that $T_\lambda$ is strictly decreasing on $(0,\lambda_0]$, equivalently, strictly increasing as $\lambda \to 0^+$.  This completes the proof of (1).

Suppose $T_\lambda \to T_0<\infty$ as $\lambda\to 0^+$. Since $T_\lambda$ is strictly decreasing on $(0,\lambda_0]$, we have that $T_0 > T_{\lambda_0} \geq t_0$.  Therefore for all $\lambda>0$, we have $h_\lambda(T_0)>0$ and hence $\lim_{\lambda\to 0^+}h_\lambda(T_0)\geq 0$.  On the other hand, for any fixed $t>0$, we have that $\exp(\lambda t)\to 1$ as $\lambda \to 0^+$. Thus, in particular, $\lim_{\lambda \to 0^+}\exp(\lambda T_0)=1$.  But then as $\phi(T_0)>1$, since $t_0 < T_0$ and $\phi$ is increasing, this implies $\lim_{\lambda\to 0^+}h_\lambda(T_0)<0$, a contradiction.  Therefore we conclude that $\lim_{\lambda \to 0^+} T_\lambda = \infty$.  This shows (2).

Finally, suppose that $0 \leq \nu \leq 1$ is the subexponential growth rate of $\phi(t)$.  Thus for any $\epsilon >0$ we have that $\phi(t) \leq \exp(t^{\nu+\epsilon})$ for large $t$.  Hence for large $t$, we have $\exp(\Lambda_t t) = \phi(t) \leq \exp(t^{\nu+\epsilon})$.  This gives $\Lambda_t \leq t^{\nu + \epsilon - 1}$ for large enough $t$ and thus $\limsup_{t \to \infty} \frac{\Lambda_t}{t^{\nu+\epsilon-1}} \leq 1$.  This shows (3).
\end{proof}

Given a simplicial map $f \from X \to P$, we will call an edge $e$ of $X$ \emph{long} if $f(e)$ is an edge of $P$, and \emph{short} otherwise, in which case $f(e)$ is a vertex of $P$.   
\begin{theorem}[{Babenko--Sabourau~\cite[Theorem~2.6]{un:BS}}]\label{thm:FCA no volume}
Let $X$ be a finite, connected, simplicial complex.  If $X$ satisfies the FCA, then $\omega(X) = 0$.
\end{theorem}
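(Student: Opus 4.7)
The plan is to exhibit a one-parameter family $\{g_\epsilon\}_{\epsilon > 0}$ of piecewise Riemannian metrics on $X$ for which $\omega(X,g_\epsilon) \to 0$ as $\epsilon \to 0$. Let $f \from X \to P$ be the simplicial map supplied by the FCA, with $m = \dim X$ and $k = \dim P < m$. Define $g_\epsilon$ by assigning length $1$ to each long edge of $X$ and length $\epsilon$ to each short edge. The two quantities to estimate are $\vol(X, g_\epsilon)$ and $\entropy(X, g_\epsilon)$.

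For the volume: each top-dimensional simplex $\sigma$ of $X$ maps via $f$ onto a simplex of $P$ of dimension $j \leq k$, so it has at most $k$ long-edge directions. Its Euclidean realization with the $g_\epsilon$-edge lengths has volume of order $\epsilon^{m-j} \leq \epsilon^{m-k}$ for $\epsilon \leq 1$. Summing over the finitely many top simplices gives $\vol(X, g_\epsilon) \leq C_1 \epsilon^{m-k}$. For the entropy, I would bound the number of top-dimensional simplices of $\tX$ meeting a ball $B_{x_0}(t)$ by decomposing along the fibration $\tilde f \from \tX \to \widetilde P$. Putting unit edges on $\widetilde P$, the image $\tilde f(B_{x_0}(t))$ lies in a $t$-ball of $\widetilde P$, which contains at most $C_2 e^{K_P t}$ vertices for a constant $K_P$ depending only on $P$. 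Over each such vertex, the portion of the fiber inside $B_{x_0}(t)$ lies in a combinatorial fiber ball of radius $t/\epsilon$; by the FCA and the finiteness of $P$, a single subexponential function $\phi$ uniformly bounds the growth of all such fibers. Combining yields
\[ \vol(B_{x_0}(t), \tilde g_\epsilon) \leq C_3 \cdot e^{K_P t} \cdot \phi(t/\epsilon) \cdot \epsilon^{m-k}. \]
Taking $(1/t)\log$ and letting $t \to \infty$ with $\epsilon$ fixed, Lemma~\ref{lem:MovingZero} shows the $\phi$-contribution is subexponential in $t$, so $\entropy(X,g_\epsilon) \leq K_P$, which is independent of $\epsilon$. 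Hence
\[ \omega(X,g_\epsilon) \leq K_P \cdot C_1^{1/m} \cdot \epsilon^{(m-k)/m} \longrightarrow 0 \quad \text{as } \epsilon \to 0. \]

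The main technical point is the ball-counting in step two: one must decompose the top-dimensional simplices of $\tX$ meeting $B_{x_0}(t)$ cleanly along the $\tilde f$-fibration, handle fibers over interior points of higher-dimensional simplices of $P$ (not just vertices) and not only components over a single basepoint, and reconcile the ambient $\tX$-distance with the intrinsic combinatorial fiber distance. Lemma~\ref{lem:MovingZero} is the quantitative instrument for comparing $\phi(t/\epsilon)$ to exponential rates $e^{\lambda t}$; for fixed $\epsilon$ the proof needs only that $\phi$ is subexponential, but the finer control on $T_\lambda$ and $\Lambda_t$ becomes essential if $\epsilon$ is allowed to depend on $t$. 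Indeed, the authors' weakened hypothesis $\nu < 1/m$ (compared with the sharp Babenko--Sabourau bound $\nu < (m-k)/m$) is precisely what permits the straightforward fixed-$\epsilon$ tradeoff above to close out the estimate.
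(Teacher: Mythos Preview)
The central estimate $\vol(B_{x_0}(t),\tilde g_\epsilon) \leq C_3\, e^{K_P t}\,\phi(t/\epsilon)\,\epsilon^{m-k}$ is false, and the error is not a technicality. Take $P$ to be a tree---for instance the single-edge graph underlying the tubular realization of $G=\ZZ^2*_\ZZ\ZZ^2\cong\ZZ\times F_2$. Then $\tilde P=P$ is finite, so your $K_P$ can be taken to be $0$, and for fixed $\epsilon$ your bound would force $\entropy(X,g_\epsilon)\leq 0$. But $\pi_1(X)$ contains $F_2$, so $\entropy(X,g)>0$ for \emph{every} piecewise Riemannian metric $g$. The same contradiction arises whenever $X$ satisfies the FCA with $P$ simply connected and $\pi_1(X)$ has exponential growth, which is exactly the case the theorem is about.

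What goes wrong is the fibre count. The lift $\tilde f\from\tX\to\tilde P$ does not have connected fibres: over $\tilde v\in\tilde P$ lying above $v\in P$, the set $\tilde f^{-1}(\tilde v)$ breaks into components indexed, up to finite ambiguity, by cosets of the image of $\pi_1(F_v)$ in $\pi_1(X)$, and the number of such components meeting $B_{x_0}(t)$ grows like $\pi_1(X)$ itself---exponentially in all interesting cases. All of the exponential growth of $G$ is hidden in that count; it is invisible from $P$ and from the intrinsic growth of any single fibre component. Your parenthetical about handling ``not only components over a single basepoint'' is precisely the fatal issue, not a loose end.

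The paper's argument confronts this directly by counting homotopy classes of loops and decomposing each as $\varepsilon_1\sigma_1\cdots\varepsilon_k\sigma_k$ with $\varepsilon_i$ long edges and $\sigma_i$ fibre segments. The exponential growth of $G$ reappears as the product $\prod_i\calN\bigl((t_i+1)/s\bigr)$ over the $k\leq t$ segments, and this contributes an entropy term that genuinely blows up as $s\to 0$; the paper shows it blows up no faster than $s^{-(\nu+\epsilon)}$ and then uses $\nu<1/m$ so that multiplication by $\vol^{1/m}\sim s^{1/m}$ still drives $\omega(X,g_s)$ to zero. Your final remark therefore has it backwards: the hypothesis on $\nu$ is not a simplifying device that enables a fixed-$\epsilon$ shortcut---no such shortcut exists---it is the essential ingredient that makes the entropy--volume tradeoff close at all.
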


\begin{proof}[Proof when subexponential growth rate of fibers is less than $1/\dim X$]
Let $m$ denote the dimension of $X$.  Suppose $f \from X \to P$ is a simplicial map where $\dim P < m$ and for every $p \in P$, the fiber $f^{-1}(p)$ has subexponential growth with subexponential growth rate $\nu$ where $\nu < 1/m$.  Without loss of generality, we may assume that $f \from X \to P$ is surjective and has connected fibers (see, for example, Proposition 2.1 of \cite{un:BS}).  Fix piecewise Riemannian metrics $g_X$, $g_{P}$ on $X$ and $P$ respectively, where the metric on each simplex agrees with that of a Euclidean simplex whose edges all have length 1.  We can pull back the metric $f^*(g_{P})$ to $X$, where it is everywhere degenerate because the dimension of $P$ is strictly smaller than $m$.  Consider a new metric $g_s$ for $s>0$ defined pointwise by \[g_s=f^*(g_{P})+s^2g_X.\]
Since $f^*(g_{P})$ is everywhere degenerate, we clearly have \[\lim_{s\to 0^+}\vol(X,g_s)=0.\]
We will prove the theorem by showing that $\omega(X,g_s) = \entropy(X,g_s) \vol(X,g_s)^{1/m}$ goes to 0 as $s$ approaches 0.  It is not the case in general that $\entropy(X,g_s)$ stays bounded as $s \to 0^+$, nonetheless, we can show that $\omega(X,g_s)$ does limit to 0 as $s \to 0^+$.  

To calculate the volume entropy we will estimate the number of homotopy classes in $X$ of $g_s$--length at most $t$, as $t$ becomes large.  

Let us first estimate the number of homotopy classes in fibers. Since $g_s$ reduces to $s^2g_X$ along fibers, we can choose $s$ sufficiently small so that each fiber has diameter at most $\frac{1}{2}$.  Suppose $F_v=f^{-1}(v)$ is a fiber over some vertex $v\in P$. Fix a basepoint $x\in F_v$, and for $t \geq 0$ let $\calN_s(F_v,x;t)$ denote the number of homotopy classes of loops in $F_v$ based at $x$ whose $g_s$--length is at most $t$.  Then since $g_s$ scales the $g_X(=g_1)$--length of edges in $F_v$ by $s$, we have 
\begin{equation*}
\calN_s(F_v,x;t) = \calN_1\left(F_v,x;\frac{t}{s}\right).
\end{equation*}

As each fiber is subexponentially growing by asumption and since there are only finitely many vertices in $P$, for every $0< \lambda \leq 1$, there exists $T_\lambda$ such that for all $t \geq T_\lambda$ and vertex $v \in P$, we have
\begin{equation*}\label{eq:SubExp}
\calN_1\left(F_v,x;t \right)\leq \exp(\lambda T).
\end{equation*}
Define $\calN(t)=\max_{v \in P} \calN_1\left(F_v,x;t \right)$ and extend $\calN$ by linear interpolation to a continuous non-decreasing function $\calN \from [0,\infty) \to [0,\infty)$.  We define $C_\lambda = \calN(T_\lambda)$, so that for all vertices $v \in P$ and for all $t \geq 0$, \[\calN_1\left(F_v,x;t\right) \leq C_\lambda \exp(\lambda t).\]
 
For fixed $s$, we can let $\lambda$ depend on $s$.  Suppose that $\calN(t_0) > 1$ for some $t_0 > 0$ so that $\calN(t)$ satisfies the hypotheses of Lemma~\ref{lem:MovingZero}.  Let $\lambda_0 = \min\{ 1, \frac{1}{t_0}\log \calN(t_0) \}$ as defined in Lemma~\ref{lem:MovingZero}.  Assume that $s$ is small enough so that $\frac{1}{s} \geq T_{\lambda_0}$.  Since $T_\lambda$ is strictly decreasing on $(0,\lambda_0]$ and $\lim_{\lambda \to 0^+} T_\lambda = \infty$, there is a $\Lambda_{1/s} \in (0,\lambda_0]$ such that $T_{\Lambda_{1/s}} = \frac{1}{s}$.  If $\calN(t) \leq 1$ for all $t$, we take $\Lambda_{1/s} = 0$. 

An arbitrary loop $\gamma$ in $X$ can be represented as an edge path in the 1--skeleton.  Decompose such a path $\gamma$ as  \[\gamma = \varepsilon_1\sigma_1\cdots\varepsilon_k\sigma_k,\] where the $\varepsilon_i$ are long edges, and the $\sigma_i$ are edge paths consisting of edges in some fiber $F_{v_i}$. 

Connect the endpoints of each $\sigma_i$ to the basepoint $x_i \in F_{v_i}$ to form a loop $\overline{\sigma}_i$ at the expense adjoining to paths of length at most $\diam(F_{v_i},g_s) < \frac{1}{2}$. Hence if $\sigma_i$ has $g_s$--length $t_i$ then the length of $\overline{\sigma}_i$ is at most $t_i + 2\diam(F_{v_i},g_s) \leq  t_i+1$. Up to homotopy, the number of possible $\sigma_i$ of length at most $t_i$ is then bounded above by 
\begin{align*}
\calN_s(F_{v_i},x_i;t_i+2\diam(F_{v_i},g_s)) \leq \calN_1\left(F_{v_i},x_i;\frac{t_i+1}{s}\right) 
\leq C_{\Lambda_{1/s}} \exp\left(\Lambda_{1/s} \frac{t_i+1}{s}\right).
\end{align*}

Let $n_e$ be the total number of edges in $X$.  On the one hand, each long edge $\varepsilon_i$ has $g_s$--length at least 1, hence $k$ is less that than the length of $\gamma$.  On the other hand, we also have $\sum_{i=1}^k t_i$ is less than the length of $\gamma$.  Thus, we can bound the total number of possible paths of $g_s$--length at most some integer $t$ by 
\begin{align*}
n_e^t\prod_{i=1}^t C_{\Lambda_{1/s}} \exp\left(\Lambda_{1/s} \frac{t_i+1}{s}\right) & =
n_e^t C_{\Lambda_{1/s}}^t \exp\left( \frac{\Lambda_{1/s}}{s} \sum_{i=1}^t t_i\right) \exp\left(\frac{\Lambda_{1/s} t}{s}\right) \\
 &\leq n_e^t C_{\Lambda_{1/s}}^t \exp\left(\frac{\Lambda_{1/s} t}{s}\right)\exp\left( \frac{\Lambda_{1/s} t}{s}\right).
\end{align*}
 
Taking the logarithm, dividing by $t$ and letting $t \to \infty$ we obtain:
\begin{equation*}\label{eq:Logged}
\entropy(X,g_s) \leq \log(n_e) + \log(C_{\Lambda_{1/s}}) + \frac{2\Lambda_{1/s}}{s} \end{equation*}
Recall now that $C_{\Lambda_{1/s}} = \calN(T_{\Lambda_{1/s}}) = \calN\left(\frac{1}{s}\right)$.  Let $\epsilon = \frac{1}{2}(1/m - \nu)$ so that $\nu + 2\epsilon = 1/m$.  By Lemma~\ref{lem:MovingZero}\eqref{item:3} we have $\Lambda_{1/s} \leq s^{1-(\nu + \epsilon)}$ for sufficiently small $s$.  Thus, for such $s$, we have
\begin{equation*}\label{eq:EntEst}
\entropy(X,g_s) \leq \log(n_e)+\log \calN\left(\frac{1}{s}\right) + \frac{2}{s^{\nu + \epsilon}}.
\end{equation*}
Let $V=\vol(X,g_X)^{\frac{1}{m}}$ be the normalized volume of $X$ with the initial metric $g_X$. Therefore, the normalized volume of $g_s$ is $s^{1/m}V$. Multiplying the above by this we get
\begin{equation*}
\entropy(X,g_s)\vol(X,g_s)^{\frac{1}{m}} \leq s^{1/m}V\log(n_e) + s^{1/m}V\log \calN\left(\frac{1}{s}\right) + 2Vs^{\epsilon}.
\end{equation*}
The first and third terms on the right hand side clearly go to 0 as $s \to 0^+$.  The middle term goes to 0 as $s \to 0^+$ because $\calN$ is a subexponential function with subexponential growth rate less than $1/m$.  Thus, the left hand side must go to 0 as $s \to 0^+$, so the sequence of metrics $(X,g_s)$ shows that $\omega(X)$ must be 0.
\end{proof}

As a consequence, we get the following strengthening of Corollary~\ref{co:mve vanishes}.

\begin{proposition}\label{prop:mve vanishes}
Let $G$ be a group of finite type.  Then the following are equivalent:
\begin{enumerate}
\item $\omega(X) = 0$ for some $G$--complex $X$ with $\dim(X) = \gdim(G)$.
\item $\omega(X) = 0$ for every $G$--complex $X$.
\end{enumerate}
\end{proposition}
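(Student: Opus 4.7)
The direction $(2) \Rightarrow (1)$ is immediate, since $G$ has finite type and hence admits some $G$--complex of minimal dimension.  For the converse, suppose $X$ is a $G$--complex with $\dim X = \gdim(G) = k$ and $\omega(X) = 0$, and let $Y$ be an arbitrary $G$--complex of dimension $m$.  The case $m = k$ is exactly Corollary~\ref{co:mve vanishes}, so I may assume $m > k$.  The plan is to verify the FCA for $Y$ and then invoke Theorem~\ref{thm:FCA no volume}.

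Both $X$ and $Y$ are $K(G,1)$--spaces, so there is a homotopy equivalence $Y \to X$.  After replacing $Y$ by a subdivision $Y'$ via simplicial approximation, I can realize this homotopy equivalence by a simplicial map $f \from Y' \to X$; since $Y'$ is still a $G$--complex of dimension $m$, Proposition~\ref{prop:essential he invariant} reduces the task to showing $\omega(Y') = 0$.  The key observation is that for any $p \in X$ and any connected component $F_0$ of $f^{-1}(p)$, the composition $F_0 \hookrightarrow Y' \xrightarrow{f} X$ is the constant map to $p$, hence induces the trivial map on $\pi_1$.  Since $f_*$ is an isomorphism on $\pi_1$, this forces the inclusion-induced map $\pi_1(F_0) \to \pi_1(Y')$ to be trivial, so the image of $\pi_1(F_0)$ in $\pi_1(Y')$ is the trivial group, which has subexponential growth rate $0 < 1/m$.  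Consequently $Y'$ satisfies the FCA in the strengthened form proven above, and Theorem~\ref{thm:FCA no volume} yields $\omega(Y') = 0$.

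I do not foresee serious obstacles.  The single point requiring some care is that the version of Theorem~\ref{thm:FCA no volume} proven above demands the stronger bound $1/m$ on the fibers' subexponential growth rate rather than the looser $\frac{m-k}{m}$ in the original FCA; this is automatic here since the rate is $0$.  Likewise, simplicial approximation and subdivision leave the key fact intact---that each fiber's $\pi_1$-image in $\pi_1(Y')$ is trivial---since this property depends only on the homotopy class of $f$, not on its particular simplicial representative.
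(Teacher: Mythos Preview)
Your proof is correct and follows essentially the same route as the paper's: both reduce the equal-dimension case to Corollary~\ref{co:mve vanishes}, and in the higher-dimensional case both use a simplicial homotopy equivalence $Y \to X$ to a minimal-dimension model, observe that the fibers have trivial $\pi_1$-image (hence subexponential growth rate $0$), and apply Theorem~\ref{thm:FCA no volume}.  You are slightly more explicit than the paper about the subdivision step and about why the restricted $1/m$--version of the theorem suffices, but the substance is identical.
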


\begin{proof}
Suppose that $\omega(X) = 0$ where $\dim(X) = \gdim(G)$ and let $Y$ be a $G$--complex.  If $\dim(Y) = \dim(X)$, then $\omega(Y) = 0$ by Corollary~\ref{co:mve vanishes}.  Else we have that $\dim(Y) > \dim(X)$.  Let $f \from Y \to X$ be a homotopy equivalence.  As in Proposition~\ref{prop:essential he invariant}, we may assume that $f$ is simplicial.  As $f$ is a homotopy equivalence, every fiber $f^{-1}(p)$ has subexponential growth with subexponential growth rate 0.  Indeed, the inclusion induced image of $\pi_1(f^{-1}(p))$ in $\pi_1(Y)$ is trivial.  Therefore $Y$ satisifes the FCA and $\omega(Y) = 0$ by Theorem~\ref{thm:FCA no volume}.  This shows that (1) implies (2).

The other implication is obvious.
\end{proof}

\subsection{Fiber \texorpdfstring{$\pi_1$--growth}{pi\_1-growth} non-collapsing assumption}

Next, we discuss the non-collapsing assumption.

\begin{definition}[Babenko--Sabourau \cite{un:BS}]\label{def:FNCA} 
A simplicial complex $X$ of dimension $m$ satisfies the \emph{fiber $\pi_1$-growth non-collapsing assumption} (FNCA) if there exists a constant $\delta = \delta(X) > 0$ such that for every simplicial map $f \from X \to P$ to a finite simplicial complex $P$ of dimension at most $m-1$, there exists $p \in P$ and a connected component $F_0\subseteq f^{-1}(p)$ such that the inclusion induced image of $\pi_1(F_0)$ in $\pi_1(X)$ has uniform exponential growth at least $\delta$.
\end{definition}

Babenko--Sabourau prove that the FNCA is sufficient to ensure non-vanishing of the minimal volume entropy and moreover provide a positive lower bound in this case.

\begin{theorem}[{Babenko--Sabourau~\cite[Theorem~3.6]{un:BS}}]\label{thm:FNCA}If $X$ is a connected, finite simplicial complex with dimension $m$ satisfying the FNCA, then $\omega(X) > 0$.  More precisely, we have\[\omega(X)\geq \frac{\delta}{2 \cdot C_m} \] 
where $\delta = \delta(X)$ and $C_m>0$ is a constant depending only on the dimension $m$.
\end{theorem}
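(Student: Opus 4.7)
I would argue by contrapositive. Suppose $\omega(X)<\delta/(2C_m)$ for a dimensional constant $C_m>0$ to be determined, and pick a piecewise Riemannian metric $g$ on $X$ with $\omega(X,g)<\delta/(2C_m)$. After rescaling so that $\vol(X,g)=1$, we have $\entropy(X,g)=\omega(X,g)<\delta/(2C_m)$. The goal is to build a simplicial map $f\from X\to P$ with $\dim P\leq m-1$ whose fiber components all have inclusion-induced image in $\pi_1(X)$ of uniform exponential growth rate strictly less than $\delta$, contradicting the FNCA and giving the desired bound.

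The first step is to construct the candidate map $f$ via a nerve-of-cover construction. Fix a radius $r>0$ proportional to $1/\entropy(X,g)$, with the proportionality constant depending on $m$ and to be calibrated at the end. Take a maximal disjoint collection of balls $B(x_i,r/2)$ in $(X,g)$; then the balls $B(x_i,r)$ cover $X$, and a volume-packing estimate bounds their number in terms of $r^{-m}$. Let $P$ be the nerve of this cover, and, after a sufficiently fine subdivision of $X$, let $f\from X\to P$ be the induced simplicial map sending each vertex of $X$ to a vertex of $P$ corresponding to some ball containing it. By construction, each fiber component sits inside a union of a bounded number of the original balls, so has $g$-diameter $O_m(r)$.

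The main obstacle is forcing $\dim P\leq m-1$. I would address this by a Gromov-style filling/collapsing argument: since $X$ is $m$-dimensional, simplices of $P$ of dimension $\geq m$ can be collapsed onto their boundaries in a controlled way, producing a replacement complex $P'$ of dimension at most $m-1$ and a simplicial map $f'\from X\to P'$ whose fiber components still have $g$-diameter $O_m(r)$. The dimensional constant $C_m$ absorbs the combinatorial overhead of this collapsing procedure together with the packing estimate above. Making this filling step precise and quantitative enough to recover the exact bound $\delta/(2C_m)$ is the most delicate part of the argument.

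Finally, I would bound the $\pi_1$-growth of each fiber component. Fix a connected component $F_0\subseteq (f')^{-1}(p)$ and a basepoint in $F_0$. The inclusion-induced image of $\pi_1(F_0)$ in $\pi_1(X)$ is generated by loops based at this point of length at most $2\diam(F_0,g)=O_m(r)$. Every word of length at most $t$ in these generators corresponds in $\tX$ to a path of length $O_m(rt)$ starting from a fixed lift, and the number of such homotopy classes is bounded by the ball-growth in $\tX$, which by definition has exponential growth rate $\entropy(X,g)$. Hence the uniform exponential growth rate of the subgroup $\pi_1(F_0)\to \pi_1(X)$ is at most $O_m(r)\cdot\entropy(X,g)$. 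Choosing the proportionality constant in $r\sim 1/\entropy(X,g)$ so that this bound is strictly less than $\delta$ contradicts the FNCA, and tracking constants through the collapsing and covering estimates yields the lower bound $\omega(X)\geq \delta/(2C_m)$.
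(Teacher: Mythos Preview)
The paper does not prove this theorem; it is quoted from Babenko--Sabourau and used as a black box, with only the remark that Papasoglu's bound gives $C_2=\const$. So there is no ``paper's own proof'' to compare against, only the original Babenko--Sabourau argument.

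Your sketch is in the right spirit---contrapositive, normalize the metric, build a map to a lower-dimensional complex with fibers of controlled diameter, then bound the growth of the fiber subgroups by the volume entropy---and the final growth estimate via ball-counting in $\tX$ is essentially correct. But the middle step, where you pass from the nerve $P$ of a ball cover to a complex $P'$ of dimension at most $m-1$, is a genuine gap. Nerves of covers of an $m$--complex by metric balls can have arbitrarily high dimension (governed by local multiplicity, not by $\dim X$), and there is no general ``collapse simplices of dimension $\geq m$ onto their boundaries'' move that simultaneously keeps the map simplicial and keeps fiber diameters $O_m(r)$. This is precisely the hard content of the theorem.

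In Babenko--Sabourau the required map is not built from a nerve cover at all. They use a filling/sweepout argument in the spirit of Gromov and Guth: roughly, one finds an $(m-1)$--dimensional ``height'' map (a simplicial approximation to a distance-like function) whose level sets have controlled size, and the constant $C_m$ is exactly the filling constant coming from this inequality---which is why Papasoglu's filling bound feeds directly into $C_2$. If you want to carry your approach through, you would need to replace the nerve/collapse step with such a filling argument; the rest of your outline would then go through with essentially the constants you describe.
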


\begin{remark}
As we are primarily concerned with 2--dimensional simplicial complexes, we note that according to Papasoglu one may take $C_2= \const$~\cite{un:Papasoglu}.
\end{remark}

\subsection{FCA and FNCA are complementary for groups with Property \texorpdfstring{$U$}{U}}\label{sec:complementary}

As pointed out by Babenko--Sabourau, the defintions of FCA and FNCA are not complementary.  The subtlety lies in the subexponential growth rate in the definition of the FCA and the uniformity of the constant $\delta$ in the definition of the FNCA.  If we assume that the fundamental group of the complex has Property $U$, then this issue disappears.

\begin{lemma}\label{lem:complementary}
Let $G$ be a group of finite type and suppose that $G$ has Property $U$.  Then any $G$--complex either satisfies the FCA or the FCNA.
\end{lemma}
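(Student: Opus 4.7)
The plan is to prove the contrapositive: if a $G$-complex $X$ of dimension $m$ fails the FNCA, then it satisfies the FCA. Set $\delta_0 = \udelta(G)$, which is strictly positive by Property $U$.

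Assuming $X$ does not satisfy the FNCA, I would apply the negation of Definition~\ref{def:FNCA} with the particular constant $\delta_0$ to extract a simplicial map $f \from X \to P$ with $k := \dim P \leq m-1$ such that for every $p \in P$ and every connected component $F_0 \subseteq f^{-1}(p)$, the image $H = \mathrm{Im}(\pi_1(F_0) \to \pi_1(X))$ satisfies $\delta(H) < \udelta(G)$. Since $\udelta(G)$ is defined as the infimum of $\delta(H')$ over all finitely generated, exponentially growing subgroups $H' \leq G$, the strict inequality $\delta(H) < \udelta(G)$ forces $H$ to have subexponential growth---otherwise $\delta(H) \geq \udelta(G)$. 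Property $U$ then yields $\nu(H) = 0$.

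Every connected component of every fiber of $f$ thus has subexponential growth rate zero, so each fiber itself has subexponential growth rate zero in the sense of Section~\ref{sec:collapsing}. Since $k \leq m-1$, we have $\frac{m-k}{m} \geq \frac{1}{m} > 0$, so the bound $0 < \frac{m-k}{m}$ holds trivially, and $f$ witnesses the FCA for $X$. The rest is bookkeeping: one may assume $P$ is finite by replacing it with the image of $f$.

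The only conceptual subtlety---the ``main obstacle,'' such as it is---is understanding why the two definitions are a priori non-complementary and locating the precise role of Property $U$. The FCA demands an upper bound strictly less than $\frac{m-k}{m}$ on the subexponential growth rate of each fiber, while the FNCA demands a lower bound $\delta > 0$ on the uniform exponential growth rate of some fiber. In principle a subgroup could be subexponentially growing (so it has $\delta = 0$ and cannot contribute to the FNCA) and yet have $\nu$ arbitrarily close to $1$ (so it could fail the FCA bound). Property $U$ is exactly the hypothesis that rules out this intermediate regime: it collapses all subexponentially growing subgroups to $\nu = 0$, and simultaneously guarantees $\udelta(G) > 0$ so that a positive threshold $\delta_0$ is available to feed into the negation of FNCA.
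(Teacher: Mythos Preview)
Your proof is correct and uses the same two ingredients as the paper's—namely that $\udelta(G)>0$ furnishes a threshold and that every subexponentially growing subgroup has $\nu=0$—but you run the contrapositive in the opposite direction: the paper assumes $X$ fails the FCA and deduces that every map $f$ has an exponentially growing fiber with $\delta\geq\udelta(G)$, whereas you assume $X$ fails the FNCA, feed in $\delta_0=\udelta(G)$ to extract a single map whose fibers are all subexponential, and then invoke $\nu=0$ to verify the FCA. The difference is purely organizational; if anything your version is slightly more explicit in exhibiting the FCA witness.
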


\begin{proof}
Let $G$ be as in the statement and let $X$ be a $G$--complex.

Suppose that $X$ does not satisfy the FCA.  Hence, given any simplicial map $f \from X \to P$ where $P$ is a simplicial complex with $\dim(P) < \dim(X)$, there is some point $p \in P$ and a component $F_0 \subseteq f^{-1}(p)$ such that the inclusion induced image of $\pi_1(F_0)$ in $\pi_1(X)$ has exponential growth.  As $G \cong \pi_1(X)$, we must have that the uniform growth rate of the inclusion induced image of $\pi_1(F_0)$ is at least $\udelta(G)$.  Thus we see that $X$ satisfies the FNCA for $\delta(X) = \udelta(G)$.  
\end{proof}

Combining Lemma~\ref{lem:complementary} with Theorem~\ref{thm:FCA no volume}, Proposition~\ref{prop:mve vanishes} and Theorem~\ref{thm:FNCA} we obtain the following dichotomy for any group of finite type.

\begin{proposition}\label{prop:complementary}
Let $G$ be a group of finite type with $m = \gdim(G)$ and suppose that $G$ has Property $U$.  Then either
\begin{enumerate}
\item $\omega(X) = 0$ for every $G$--complex, or
\item $\uomega(G) \geq \frac{\udelta(G)}{2 \cdot C_m}$.
\end{enumerate}
\end{proposition}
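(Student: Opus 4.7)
The plan is to set up a clean dichotomy using Lemma~\ref{lem:complementary} applied to $G$-complexes of minimal dimension $m = \gdim(G)$, and then read off the two conclusions from the consequences of FCA and FNCA.

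First I would split on whether some minimal-dimension $G$-complex has vanishing minimal volume entropy. If $\omega(X_0) = 0$ for some $G$-complex $X_0$ with $\dim(X_0) = m$, then Proposition~\ref{prop:mve vanishes} immediately yields $\omega(X) = 0$ for every $G$-complex $X$, which is conclusion (1).

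Next I would handle the other case: suppose $\omega(X) > 0$ for every $G$-complex $X$ with $\dim(X) = m$. Fix such an $X$. Since $G$ has Property $U$, Lemma~\ref{lem:complementary} tells us that $X$ satisfies either the FCA or the FNCA. It cannot satisfy the FCA, since Theorem~\ref{thm:FCA no volume} would then force $\omega(X) = 0$, contradicting our case hypothesis. Therefore $X$ satisfies the FNCA. Inspecting the proof of Lemma~\ref{lem:complementary}, the non-collapsing constant may be taken to be $\delta(X) = \udelta(G)$, which is the same positive number for every such $X$. Theorem~\ref{thm:FNCA} then gives
\[ \omega(X) \geq \frac{\udelta(G)}{2 \cdot C_m}. \]
Taking the infimum over all $G$-complexes $X$ with $\dim(X) = m$ yields $\uomega(G) \geq \frac{\udelta(G)}{2 \cdot C_m}$, which is conclusion (2).

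There is no serious obstacle here since all the ingredients are in place; the only subtlety worth spelling out is why the constant $\delta(X)$ in the FNCA can be chosen uniformly, namely equal to $\udelta(G)$. This follows directly from Property $U$: any subgroup of $G$ of subexponential growth falls outside the "exponential growth" branch, and any exponentially growing subgroup contributes growth rate at least $\udelta(G)$. Thus whenever a fiber's $\pi_1$-image in $\pi_1(X) \cong G$ has exponential growth, its uniform growth rate is bounded below by $\udelta(G)$, exactly as required in Definition~\ref{def:FNCA}.
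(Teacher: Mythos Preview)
Your proof is correct and follows essentially the same approach as the paper's. The only cosmetic difference is the framing of the dichotomy: the paper splits on whether some minimal-dimension $G$-complex satisfies the FCA (and if not, invokes Lemma~\ref{lem:complementary} to get FNCA for all such complexes), whereas you split on whether some minimal-dimension $G$-complex has $\omega = 0$ and then rule out FCA by contradiction---the same ingredients are used in the same way.
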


\begin{proof}
If some $G$--complex $X$ with $\dim(X) = \gdim(G)$ satisfies the FCA, then $\omega(X) = 0$ by Theorem~\ref{thm:FCA no volume}.  Thus by Proposition~\ref{prop:mve vanishes}, we get that $\omega(X) = 0$ for every $G$--complex $X$ and thus (1) holds.

Else, by Lemma~\ref{lem:complementary}, every $G$--complex $X$ with $\dim(X) = \gdim(G)$ satisfies the FNCA with $\delta(X) = \udelta(G)$.  Thus $\omega(X) \geq \frac{\udelta(G)}{2 \cdot C_m}$ by Theorem~\ref{thm:FNCA}.  As $X$ is an arbitrary $G$--complex with $\dim(X) = \gdim(G)$, we see that (2) holds.
\end{proof}

%%%%%%%%%%%%%%%%%%%%%%%%%%%%%%%%%%%%%%%%%%%%%%%%%%%%%%%%%%%%%%%%%%%%%%%%%%%%%

\section{Vanishing Criterion when \texorpdfstring{$\gdim(G) = 2$}{gd(G) = 2}}\label{sec:vanishing criterion}

In this section we prove the first main result of this paper.  Theorem~\ref{thm:zero iff graph of groups} provides a characterization for when $\omega(X) = 0$ for every $G$--complex $X$, if $\gdim(G) = 2$, $\udelta(G) > 0$, and the subexponentially growing subgroups of $G$ belong to the collection $\{\{1\}, \ZZ, \ZZ^2, BS(1,-1) \}$.  (In particular, such a $G$ has Property $U$.) Specifically, the minimal volume entropy vanishes for each $G$--complex precisely when $G$ is the fundamental group of a graph of groups where the edge groups belong to the collection $\{\{1\}, \ZZ \}$ and the vertex groups belong to the collection $\{\ZZ, \ZZ^2, BS(1,-1)\}$.  Before we prove this theorem in Section~\ref{subsec:zero iff graph of groups}, we recall the definition of a graph of groups and set some notation in Section~\ref{subsec:graph of groups}.  Then in Section~\ref{subsec:fca -> graph}, we show that if a $G$--complex of minimal dimension satisfies the FCA and $\gdim(G) = 2$, then $G$ is the fundamental group of a graph of groups where the vertex groups and edge groups are subexponentially growing.  We complete the proof of Theorem~\ref{thm:zero iff graph of groups} in Section~\ref{subsec:zero iff graph of groups}.

\subsection{Graphs of Groups}\label{subsec:graph of groups}

General references for the material in this section are the works of Bass~\cite{ar:Bass93}, Scott--Wall~\cite{col:SW79}, and Serre~\cite{bk:Serre03}.  

A \emph{graph of groups} consists of the following data.
\begin{enumerate}
\item A finite connected graph $Y$ with vertex set $VY$ and edge set $EY$. By $o(e)$ and $\tau(e)$ we denote the originating and terminal vertices of an edge $e$ respectively, and $\bar{e}$ denotes the edge with opposite orientation.  We have $\bar{\bar{e}} = e$ and $o(\bar{e}) = \tau(e)$.
\item For each vertex $v \in VY$, there is an associated group $G_v$.
\item For each edge $e \in EY$, there is an associated group $G_e$.  We have $G_{\bar{e}} = G_e$.
\item For each edge $e \in EY$, there is an injective homomorphism $h_e \from G_e \to G_{o(e)}$.
\end{enumerate}
We will denote a graph of groups by $\calG = (Y,\{G_v\},\{G_e\},\{h_e\})$.  

Associated to a graph of groups $\calG = (Y,\{G_v\},\{G_e\},\{h_e\})$ is the \emph{fundamental group of the graph of groups}, denoted $\pi_1(\calG)$.  Briefly, it is constructed by repeatedly taking amalgamated free products and HNN-extensions using the data in $\calG$.  In more detail, as a generating set of $\pi_1(\calG)$ we take the set 
\begin{equation*}
\{G_v \mid v \in VY \} \cup \{x_e \mid e \in EY \}.
\end{equation*}

All of the relations in the vertex groups hold plus some more that use the data in $\calG$.  To write down these additional relations for $\pi_1(\calG)$, we need to fix a maximal tree $T \subseteq Y$.  Using the tree $T$, the additional relations for $\pi_1(\calG)$ are as follows:
\begin{align*}
x_e h_e(a) & = h_{\bar{e}}(a)x_{\bar{e}}, \text{ for each edge $e \in EY$ and element $a \in G_e = G_{\bar{e}}$} \\
x_e x_{\bar{e}} & = 1, \text{ for each edge $e \in EY$} \\
x_e & = 1, \text{ for each edge $e \in ET$} 
\end{align*}
The isomorphism type of $\pi_1(\calG)$ does not depend on the choice of maximal tree $T \subseteq Y$. 

Consider two graphs of groups $\calG = (Y,\{G_v\},\{G_e\},\{h_e\})$ and $\calG' = (Y',\{G'_v\},\{G'_e\},\{h'_e\})$ where
\begin{enumerate}
\item $Y'$ is a subgraph of $Y$,
\item $G'_v = G_v$ for each vertex $v \in VY'$,
\item $G'_e = G_e$ for each edge $e \in EY'$, and
\item $h'_e = h_e$ for each edge $e \in EY'$.
\end{enumerate}
Then $\pi_1(\calG')$ is isomorphic to a subgroup of $\pi_1(\calG)$.

Given a graph of groups $\calG = (Y,\{G_v\},\{G_e\},\{h_e\})$, there is an associated \emph{graph of spaces} $\calX = (Y,\{X_v\},\{X_e\},\{f_e\})$, which is well-defined up to homotopy.  For each vertex $v \in VY$, we set $X_v = K(G_v,1)$.  Likewise, for each edge we set $X_e = K(G_e,1)$ and further we fix a map $f_e \from X_e \to X_v$ so that $(f_e)_* = h_e$.   There is an associated space $\abs{\calX}$, called the \emph{realization of the graph of spaces}, obtained by gluing the spaces together using the graph $Y$.  Specifically, we define
\begin{equation*}
\abs{\calX} =  \raisebox{0.2cm}{$\displaystyle \bigcup_{v \in VY} X_v \cup \bigcup_{e \in EY} X_e \times [0,1]$} \Bigg/
      \raisebox{-0.2cm}{\begin{varwidth}{10cm}$(x,0) \in X_e \times [0,1] \sim f_e(x) \in X_{o(e)}$ and \\ 
      $(x,1) \in X_e \times [0,1] \sim (x,1) \in X_{\bar{e}} \times [0,1]$\end{varwidth}}
\end{equation*} 
We have that $\pi_1(\abs{\calX}) \cong \pi_1(\calG)$.

Let $\calG = (Y,\{G_v\},\{G_e\},\{h_e\})$ be a graph of groups.  Suppose that $e_0$ is an edge in $Y$ so that $o(e_0) \neq \tau(e_0)$, i.e., $e_0$ is not a loop.  If the inclusion map $h_{e_0} \from G_{e_0} \to G_{o(e_0)}$ is an isomorphism, then we say that the edge $e_0$ is \emph{collapsible}.  In this case, we may collapse $e_0$ and obtain a new graph of groups $\calG' = (Y',\{G'_v\},\{G'_e\},\{h'_e\})$.  The underlying graph $Y'$ is obtained by removing the edge $e_0$ from $Y$ and identifying the vertices $o(e_0)$ and $\tau(e_0)$; we denote this image of these vertices by $v'$ and define $G'_{v'} = G_{\tau(e)}$.  All other vertices and edges of $Y'$ correspond to a vertex or edge of $Y$ and we define the vertex group $G'_v$ or edge group $G'_e$ accordingly.  As the map $h_{e_0} \from G_{e_0} \to G_{o(e_0)}$ is an isomorphism, we can consider $G_{o(e_0)}$ as a subgroup of $G_{\tau(e_0)}$ via $h_{\bar{e}_0}h_{e_0}^{-1}$.  Thus for an edge $e$ in $Y$ where $o(e) = o(e_0)$, the injective homomorphism $h_e \from G_e \to G_{o(e)}$ naturally defines an injective homomorphism $h'_e \from G'_{e} \to G'_{o(e)}$.  For all other edges, we have that $h'_e = h_e$.  We say that $\calG'$ is obtained from $\calG$ by \emph{collapsing the edge $e$}.  This does not change the fundamental group, i.e., $\pi_1(\calG) \cong \pi_1(\calG')$.  This follows because of the isomorphism $A \ast_C C \cong A$.  

If no edge of $Y$ is collapsible, we say that $\calG$ is \emph{reduced}.  If $\calG$ is not reduced, we may repeatedly collapse edges to obtain a reduced graph of groups decomposition whose fundamental group is $\pi_1(\calG)$.  

There is a correspondence between decompositions of $G$ as a graphs of groups, i.e., isomorphisms $G \cong \pi_1(\calG)$, and actions of $G$ on simplicial trees $\tY$.  In this correspondence, the vertex groups and edge groups of $\calG$ correspond to the conjugacy classes of the vertex stabilizers and edge stabilizers respectively for the action of $G$.  The underlying graph of $\calG$ is $G \backslash \tY$.

\subsection{FCA induces a graph of groups decomposition}\label{subsec:fca -> graph}

We will now show how the FCA induces a graph of groups decomposition when $\gdim(G) = 2$.  

\begin{proposition}\label{prop:fca -> graph}
Suppose that $G$ is a group with $\gdim(G) = 2$.  If $X$ is a $G$--complex with $\dim(X) = 2$ and $X$ satisfies the FCA, then $G$ is isomorphic to the fundamental group of a graph of groups where the vertex groups and the edge groups are subexponentially growing.
\end{proposition}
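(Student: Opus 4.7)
The plan is to build a graph of groups decomposition of $G \cong \pi_1(X)$ directly from the simplicial map $f \from X \to P$ provided by the FCA, interpreting $f$ as a graph-of-spaces structure on $X$. Since $\dim P < \dim X = 2$, either $\dim P = 0$ or $P$ is a graph. In the easy case $\dim P = 0$, connectedness of $X$ forces $f(X)$ to be a single vertex, so $X$ is itself a single fiber and hence $G$ is subexponentially growing; the trivial one-vertex graph of groups with vertex group $G$ then proves the proposition. So I would reduce to the case where $P$ is a finite connected graph and $f$ is surjective.

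Next I would construct the associated graph of spaces. For each vertex $v$ of $P$, set $X_v = f^{-1}(\overline{\mathrm{star}(v)})$; since $f$ is simplicial, $X_v$ deformation retracts onto the fiber $f^{-1}(v)$. For each edge $e$ with midpoint $m_e$, set $X_e = f^{-1}(m_e)$, a subcomplex contained in both $X_{o(e)}$ and $X_{\tau(e)}$. The $X_v$ cover $X$, with pairwise intersections $X_v \cap X_w$ equal to the disjoint union of the $X_e$ over edges $e$ joining $v$ to $w$. I would define the underlying graph $Y$ of the prospective graph of groups by taking one vertex for each connected component of each $X_v$ and one edge for each component of each $X_e$, with incidences inherited from $P$. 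Assign as vertex (respectively edge) group the inclusion-induced image in $G$ of $\pi_1$ of the corresponding component; the edge-to-vertex maps are the evident subgroup inclusions in $G$ and are thus injective by construction, producing a bona fide graph of groups $\calG$.

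Finally, to identify $\pi_1(\calG)$ with $G$, I would appeal to Bass--Serre theory via the action of $G$ on the graph $T$ dual to the decomposition of $\tilde X$ by preimages of the midpoint fibers $X_e$; the quotient $G \backslash T$ is precisely $Y$, and vertex/edge stabilizers recover the vertex and edge groups of $\calG$ up to conjugation. Subexponential growth of the vertex and edge groups of $\calG$ is then immediate from the FCA, since $X_v$ deformation retracts onto $f^{-1}(v)$ and $X_e = f^{-1}(m_e)$, and the FCA asserts precisely that inclusion-induced images in $G$ of components of every fiber are subexponentially growing. The main technical obstacle is verifying that the dual graph $T$ is in fact a tree, that is, that the lifted midpoint fibers form a $G$-equivariant system of separating subspaces in $\tilde X$ whose removal produces exactly the preimages of open vertex-stars in the appropriate cover of $P$. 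This is a standard duality argument for graphs of spaces, but deserves some care here because the vertex and edge spaces are allowed to be disconnected.
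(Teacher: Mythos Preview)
Your proposal is correct and follows essentially the same approach as the paper. The paper works directly in the universal cover $\tX$, defining the tree $T$ from components of lifts of the midpoint fibers $F_{x_e}$ and components of their complement (citing Dunwoody for details), whereas you package the same construction as a graph-of-spaces decomposition of $X$ before passing to the Bass--Serre tree; both arguments reduce to the same separation property in $\tX$ that you correctly flag as the key technical point.
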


\begin{proof}
Let $G$ be as in the statement and let $X$ be a $G$--complex that satisfies the FCA.  Hence, there is a graph $\Gamma$ and a simplicial map $f \from X \to \Gamma$ with connected fibers such that for each $x \in \Gamma$, the image of $\pi_1(F_x)$ in $\pi_1(X)$ has subexponential growth where $F_x = f^{-1}(x)$.  

The function $f \from X \to \Gamma$ induces a graph of groups decomposition of $G$ as in the statement of the proposition as we now recall.  For further details, we refer the reader to the work of Dunwoody~\cite{ar:Dunwoody85}.  For each edge $e$ of $\Gamma$, fix a point $x_e$ in the interior of the edge.  Let $\tX$ be the universal cover of $X$ and let $p \from \tX \to X$ be the covering map.  We consider the lift of the fibers $F_{x_e}$ to $\tX$ and define \[\calE = \cup \{\pi_0(p^{-1}(F_{x_e})) \mid e \mbox{ is an edge of } \Gamma \} \mbox{ and }\calV = \pi_0(\tX - \cup \{ \varepsilon \mid \varepsilon \in \calE \}).\]    
For each $\varepsilon \in \calE$, there are exactly two components in $\calV$ whose closures in $\tX$ contain $\varepsilon$.  Therefore, in the obvious way, we get a graph $T$ with vertices $\calV$ and edges $\calE$.  The graph $T$ is clearly connected and as each $\varepsilon \in \calE$ separates $\tX$, we see that $T$ is a tree.

The stabilizer of a component in $\calE$ is a conjugate of the image of $\pi_1(F_{x_e})$ in $\pi_1(X)$ for some edge $e$ of $\Gamma$.  Given a component $c \in \calV$, the subset $p(c)$ deformation retracts onto $F_v$ for some vertex $v$ of $\Gamma$.  Hence the stabilizer of a component in $\calV$ is a conjugate of the image of $\pi_1(F_v)$ in $\pi_1(X)$ for some vertex $v$ of $\Gamma$.

Therefore $G$ acts on a tree where the stabilizer of any point is subexponentially growing.  As stated in Section~\ref{subsec:graph of groups}, by Bass--Serre theory this implies that $G$ is isomorphic to the fundamental group of a graph of groups where the vertex groups and edge groups are subexponentially growing.
\end{proof}

\subsection{Proof of Theorem~\ref{thm:zero iff graph of groups}}\label{subsec:zero iff graph of groups}

Before can prove Theorem~\ref{thm:zero iff graph of groups}, we need a lemma that shows that certain subgroups are prohibited in groups with $\gdim(G) = 2$.

\begin{lemma}\label{lem:some subgroups}
Suppose that $H_1$, $H_2$ and $K$ belong to the collection $\{\ZZ^2,BS(1,-1)\}$.  The geometric dimension of an amalgamated free product $H_1 *_K H_2$ is equal to $3$ if both inclusions are proper.  The geometric dimension of an HNN-extension $H_1*_K$ is equal to $3$.
\end{lemma}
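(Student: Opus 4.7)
The plan is to prove $\gdim(G) = 3$ by establishing matching upper and lower bounds and then invoking the Eilenberg--Ganea theorem. The upper bound $\gdim(G) \leq 3$ is immediate from the natural graph-of-spaces realization of $G$: the vertex spaces are $2$-dimensional tori or Klein bottles serving as classifying spaces for $H_i$, the edge space is $BK \times I$ which is $3$-dimensional since $BK$ is itself a torus or Klein bottle, and the $\pi_1$-injective attachings along $BK \times \{0,1\}$ preserve asphericity. Hence it suffices to show $\operatorname{cd}(G) \geq 3$, since by Eilenberg--Ganea this implies $\gdim(G) = \operatorname{cd}(G) = 3$.

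For the lower bound, note first that since both $\ZZ^2$ and $BS(1,-1)$ are $2$-dimensional Poincar\'e duality groups, Strebel's theorem on PD-subgroups ensures that each inclusion $K \hookrightarrow H_i$ has finite index $d_i = [H_i : K]$, with $d_i \geq 2$ in the amalgam case by the properness assumption. The approach is to show $H^3(G; \ZZ G) \neq 0$, which directly implies $\operatorname{cd}(G) \geq 3$. For the amalgam case, the Mayer--Vietoris sequence reads
\[ H^2(H_1; \ZZ G) \oplus H^2(H_2; \ZZ G) \to H^2(K; \ZZ G) \to H^3(G; \ZZ G) \to 0. \]
Decomposing $\ZZ G$ as a restricted module over each of $H_i$ and $K$ via the coset decomposition, and using Shapiro's lemma together with the PD$(2)$ property of $H_i$ and $K$, one identifies $H^2(H_i; \ZZ G) \cong \bigoplus_{V_i} \ZZ$ and $H^2(K; \ZZ G) \cong \bigoplus_E \ZZ$ as abelian groups, where $V_i$ and $E$ are the sets of vertices in the $H_i$-orbit and edges of the Bass--Serre tree $T$. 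The Mayer--Vietoris map becomes, up to sign, the compactly supported simplicial coboundary $C^0_c(V;\ZZ) \to C^1_c(E;\ZZ)$ of $T$, whose cokernel is the reduced end cohomology $H^1_c(T;\ZZ)$. When both inclusions are proper, $T$ is biregular with both valences at least $2$ and has infinitely many ends, so $H^1_c(T;\ZZ) \neq 0$ and $\operatorname{cd}(G) \geq 3$.

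The HNN case is analogous: when at least one inclusion $K \hookrightarrow H_1$ is proper, the Bass--Serre tree of $G = H_1 \ast_K$ has infinitely many ends and the same Mayer--Vietoris computation yields $H^3(G; \ZZ G) \cong H^1_c(T;\ZZ) \neq 0$. When both inclusions are isomorphisms, $G \cong H_1 \rtimes_\phi \ZZ$ for some $\phi \in \Aut(H_1)$ and directly $\operatorname{cd}(G) = \operatorname{cd}(H_1) + 1 = 3$; this is consistent with the Mayer--Vietoris picture, in which $T$ is a biinfinite line with two ends and $H^1_c(T;\ZZ) = \ZZ$. Combining with the upper bound, $\gdim(G) = 3$ in all cases. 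The main technical step will be the identification of the Mayer--Vietoris cokernel with the compactly supported first cohomology of the Bass--Serre tree, which relies crucially on Poincar\'e duality for both the vertex and edge groups to reduce the module-theoretic computation to combinatorics on the tree.
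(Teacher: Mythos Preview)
Your argument is correct. The paper's proof is considerably shorter: after the same graph-of-spaces upper bound you give, it simply cites Bieri's results on the cohomological dimension of amalgams and HNN extensions \cite[Corollaries~6.5 and~6.7]{bk:Bieri76} to conclude $\operatorname{cd}(G)=3$, and then uses $\operatorname{cd}\leq\gdim$. Your approach effectively unpacks that citation: invoking Strebel to obtain finite-index inclusions, running Mayer--Vietoris with $\ZZ G$ coefficients, and identifying the cokernel with $H^1_c(T;\ZZ)$ of the Bass--Serre tree is precisely the mechanism underlying Bieri's theorem in the Poincar\'e duality setting. The payoff of your version is that it is self-contained and makes transparent exactly why properness of the inclusions---equivalently, the tree having more than two ends---is what forces the cohomological dimension up to $3$; the paper trades that transparency for brevity. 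One minor remark: you do not actually need Eilenberg--Ganea, since the sandwich $3\leq\operatorname{cd}(G)\leq\gdim(G)\leq 3$ already pins down $\gdim(G)=3$ from the elementary inequality $\operatorname{cd}\leq\gdim$ alone.
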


\begin{proof}
Let $H_1$, $H_2$ and $K$ be as in the statement.  For each of $H_1 \ast_K H_2$ and $H_1 \ast_K$, the respective graphs of spaces using $S^1 \times S^1$ for each $\ZZ^2$ and the Klein bottle for each $BS(1,-1)$ are aspherical and have dimension equal to $3$.  Thus $\gdim(H_1 \ast_K H_2) \leq 3$ and $\gdim(H_1 \ast_K) \leq 3$.  

The cohomological dimension of $H_1 \ast_K H_2$ is equal to $3$ if both inclusions are proper.  Likewise, the cohomological dimension $H_1 \ast_K$ is equal to $3$.  See the work of Bieri~\cite[Corollaries~6.5~and~6.7]{bk:Bieri76} for complete details.  As the geometric dimension is bounded from below by the cohomological dimension, the result follows. 
\end{proof}

\begin{proof}[Proof of Theorem~\ref{thm:zero iff graph of groups}]
Suppose that $G$ is a group with $\gdim(G) = 2$, $\udelta(G) > 0$, and that every subexponentially growing subgroup belongs to the collection $\{\{1\}, \ZZ, \ZZ^2, BS(1,-1)\}$.

First, we assume that $\omega(X) = 0$ for every $G$--complex $X$.  As $\udelta(G) > 0$, by Theorem~\ref{thm:FNCA} and Lemma~\ref{lem:complementary} there is some $G$--complex $X$ that satisfies the FCA.  Therefore by Proposition~\ref{prop:fca -> graph} we have that $G$ is isomorphic to the fundamental group of a graph of groups $\calG = (Y,\{G_v\},\{G_e\},\{h_e\})$ with subexponentially growing vertex groups and edge groups.  By collapsing any collapsible edges, we may assume that $\calG$ is reduced.      

If $G_v$ is trivial for some vertex $v \in VY$, then every edge incident on $v$ is a loop as $\calG$ is reduced.  This implies that $G \cong \pi_1(\calG)$ is a free group.  This is contrary to the assumption that $\gdim(G) = 2$.  Hence the vertex groups of $\calG$ belong to the collection $\{\ZZ,\ZZ^2,BS(1,-1)\}$.

By Lemma~\ref{lem:some subgroups}, the groups $\ZZ^2$ and $BS(1,-1)$ cannot appear as edge groups since $\calG$ is reduced and $\gdim(G) = 2$.  Thus, the edge groups of $\calG$ belong to the collection $\{ \{1\}, \ZZ \}$.  

Next, we assume that $G$ is isomorphic to the fundamental group of a graph of groups $\calG = (Y,\{G_v\},\{G_e\},\{h_e\})$ where vertex groups belong to the collection $\{\ZZ, \ZZ^2, BS(1,-1) \}$ and the edge groups belong to the collection $\{ \{ 1 \},\ZZ \}$.  Let $\calX = (Y,\{X_v\},\{X_e\},\{f_e\})$ be the corresponding graph of spaces built using a point, $S^1$, $S^1 \times S^1$ and the Klein bottle respectively for each $\{1\}$, $\ZZ$, $\ZZ^2$ and $BS(1,-1)$ respectively.  Then $\abs{\calX}$ is a $G$--complex with $\dim(\abs{\calX}) = 2 = \gdim(G)$ and there is a map $p \from \abs{\calX} \to Y$ where each of the fibers either a point, $S^1$, $S^1 \times S^1$ or $BS(1,-1)$.  This shows that $\abs{\calX}$ satisfies the FCA and hence $\omega(\abs{\calX}) = 0$ by Theorem~\ref{thm:FCA no volume}.  By Proposition~\ref{prop:mve vanishes}, we conclude that $\omega(X) = 0$ for every $G$--complex $X$.
\end{proof}

\section{Free-by-cyclic groups}\label{sec:FbyZ}

In this section we examine the minimal volume entropy of free-by-cyclic groups and prove Theorem~\ref{thm:FbyZ}.  To prove this theorem we must show that the following three statements are equivalent for a free-by-cyclic group $G_\phi$.  
\begin{enumerate}
\item $\omega(X)=0$ for every $G_\phi$--complex $X$.
\item $G_\phi$ is virtually tubular.
\item Some power of $\phi$ is geometrically linear unipotent power.
\end{enumerate}
First we prove that (1) implies (3) in Proposition~\ref{prop:omega = 0 -> glu}.  This takes place in Section~\ref{subsec:primitive free and glu} after we formally define a geometrically linear unipotent outer automorphism.  Following this, in Section~\ref{subsec:FbyZ proof} we complete the proof of Theorem~\ref{thm:FbyZ} by showing that (3) implies (2) and observing that (2) implies (1) by Theorem~\ref{thm:zero iff graph of groups} and Proposition~\ref{prop:finite index}.

Before we begin the proof of Theorem~\ref{thm:FbyZ}, in Section~\ref{subsec:growth of subgroups} we  classify subexponentially growing subgroups of free-by-cyclic groups and show that free-by-cyclic groups have uniform uniform exponential growth (with a uniform constant).

\subsection{Growth of subgroups of free-by-cyclic groups}\label{subsec:growth of subgroups}
Let $\phi$ be an outer automorphism of a finitely generated free group $F_n$ and let $G_\phi$ be the corresponding free-by-cyclic group. 

\begin{lemma}\label{lem:FbyZ subexponential subgroups}
Any nontrivial finitely generated subgroup of $G_\phi$ with subexponential growth is isomorphic to $\ZZ$, $\ZZ^2$ or $BS(1,-1)$.  
\end{lemma}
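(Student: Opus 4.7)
The plan is to exploit the defining short exact sequence $1 \to F_n \to G_\phi \to \mathbb{Z} \to 1$ and analyze how a nontrivial finitely generated subgroup $H \leq G_\phi$ of subexponential growth interacts with the free kernel $F_n$. Setting $K = H \cap F_n$, the sequence restricts to $1 \to K \to H \to H/K \to 1$ in which $H/K$ embeds in $\mathbb{Z}$, so $H/K$ is either trivial or infinite cyclic; by Nielsen--Schreier, $K$ is free, although possibly of infinite rank since $H$ need not have finite index in $G_\phi$.

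The crux of the argument will be to show that $K$ is abelian, hence (being free) either trivial or infinite cyclic. Otherwise $K$ would contain a rank-two free subgroup, and any such subgroup inside a finitely generated subexponential group $H$ yields a contradiction: if $T = \{a,b\}$ is a basis of such an $F_2$ and $S$ is a finite generating set of $H$ in which each element of $T$ has $S$--length at most $C$, then the injective image of the radius-$n$ ball in $\langle T \rangle$ with respect to $T$ sits inside the radius-$Cn$ ball of $(H, S)$, forcing exponential growth and contradicting the assumption on $H$. I expect this step to be the only one needing care, since $K$ itself is a priori not finitely generated and so its own growth rate is not directly available; the point is that one really does rule out an $F_2$ subgroup of the ambient $H$.

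With $K \in \{\{1\}, \mathbb{Z}\}$ and $H/K \in \{\{1\}, \mathbb{Z}\}$, I would finish by a short case analysis. If $K = \{1\}$, then the nontrivial $H$ injects into $\mathbb{Z}$, giving $H \cong \mathbb{Z}$. If $K \cong \mathbb{Z}$ and $H/K$ is trivial, then $H = K \cong \mathbb{Z}$. Otherwise $K = \langle w \rangle \cong \mathbb{Z}$ and $H/K \cong \mathbb{Z}$; choosing $g \in H$ that maps to a generator of $H/K$, conjugation by $g$ restricts to an automorphism of $\mathbb{Z}$, so $gwg^{-1} = w^{\varepsilon}$ with $\varepsilon \in \{\pm 1\}$. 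The subgroup $H = \langle w, g \rangle$ is then presented by $\langle w, g \mid gwg^{-1} = w^{\varepsilon}\rangle$, which is $\mathbb{Z}^2$ when $\varepsilon = +1$ and $BS(1,-1)$ when $\varepsilon = -1$, completing the classification.
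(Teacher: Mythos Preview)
Your argument is correct and follows essentially the same route as the paper: intersect with the free kernel to get a free group $K$, rule out rank $\geq 2$ via the $F_2$-subgroup contradiction with subexponential growth of $H$, and then run the identical case analysis on $K \in \{\{1\},\ZZ\}$ and $H/K \in \{\{1\},\ZZ\}$. Your write-up is in fact a bit more careful than the paper's, which simply asserts that $H\cap F_n$ is trivial or $\ZZ$ without spelling out that $K$ need not be finitely generated and that the contradiction runs through an $F_2$ subgroup of the ambient finitely generated $H$.
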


\begin{proof}
Write $G_\phi=F_n\rtimes_\Phi \ZZ$ where $\Phi \in \Aut(F_n)$ represents the outer automorphism $\phi$ and let $\pi$ be the projection onto the cyclic factor.  

Let $H$ be a nontrivial finitely generated subgroup of $G_\phi$ that has subexponential growth.  As $H$ is nontrivial and has subexponential growth, $H \cap F_n$ is either trivial or isomorphic to $\ZZ$.  If $H \cap F_n$ is trivial, then $\pi$ maps $H$ injectively to $\ZZ$ hence $H \cong \ZZ$.  Otherwise, we have that $H\cap F_n = \I{a}$ for some nontrivial $a \in F_n$. If $\pi(H)$ is trivial then $H= \I{a} \cong \ZZ$.  Otherwise, let $h$ be an element of $H$ that generates $\pi(H) \cong \ZZ$. Then $b=hah^{-1} \in H \cap F_n$ and so $b=a$ or $b=a^{-1}$.  As $h$ generates $\pi(H)$, this implies that $H \cong \ZZ^2$ in the case $b=a$ and that $H \cong BS(1,-1)$ in the case $b=a^{-1}$. 
\end{proof}

\begin{lemma}\label{lem:uniform uniform FbyZ}
Suppose $H$ is a finitely generated subgroup of $G_\phi$ that is exponentially growing.  Then $\delta(H) \geq \frac{1}{6}\log 3$.  In particular, $\udelta(G_\phi) \geq \frac{1}{6} \log 3$.
\end{lemma}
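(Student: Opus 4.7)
The plan is to show that for any finite generating set $S$ of $H$, one can find two elements of $S$-length at most $6$ that freely generate a rank-$2$ free subgroup of $F_n$; standard counting then yields the bound on $\delta(H,S)$, and taking infima gives the bound on $\udelta(G_\phi)$.

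First I would show that $H\cap F_n$ must contain a copy of $F_2$. Since $H$ has exponential growth, the case analysis already carried out in the proof of Lemma~\ref{lem:FbyZ subexponential subgroups} rules out $H\cap F_n$ being trivial or infinite cyclic, because each of those cases forces $H$ to be one of $\{1\}$, $\ZZ$, $\ZZ^2$, or $BS(1,-1)$, all of which have subexponential growth. Hence $H\cap F_n$ is a non-cyclic subgroup of the free group $F_n$, and so automatically contains a free subgroup of rank $2$.

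Next, let $S=\{s_1,\ldots,s_k\}$ be a finite generating set for $H$ and let $\pi\from G_\phi\to \ZZ$ be the projection with kernel $F_n$. Since $\pi$ takes values in an abelian group, every commutator $[s_i,s_j]$ lies in $H\cap F_n$ and has $S$-length at most $4$, while the longer expressions $[s_i,s_j^2]$ lie in $H\cap F_n$ and have $S$-length at most $6$. I would show that among these short elements some pair freely generates a rank-$2$ subgroup of $F_n$. The main obstacle is to rule out the degenerate configuration in which all such short elements pairwise commute in $F_n$; since commuting elements of a free group are powers of a common element, this would force them to lie in a single maximal cyclic subgroup $\langle z\rangle$ of $F_n$. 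The key remaining task is to show this degeneracy forces $H$ itself into the list $\{\{1\},\ZZ,\ZZ^2,BS(1,-1)\}$ of subexponential subgroups, contradicting our hypothesis. A useful input here is that $\Phi^{\mathrm{ab}}\in GL_n(\ZZ)$ has determinant $\pm 1$, so no element of $F_n$ is conjugated to a proper power of itself by any power of the stable letter, which prevents $G_\phi$ from containing $BS(1,m)$ with $\abs{m}\geq 2$.

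Once such short elements $x,y\in H$ with $\norm{x}_S,\norm{y}_S\leq 6$ generating $F_2\leq F_n\leq G_\phi$ are produced, the $4\cdot 3^{n-1}$ distinct reduced words of length $n$ in $x$ and $y$ yield that many distinct elements of $H$ of $S$-length at most $6n$. Therefore
\[
\delta(H,S)\;\geq\;\lim_{n\to\infty}\frac{\log(4\cdot 3^{n-1})}{6n}\;=\;\frac{\log 3}{6}.
\]
Since $S$ is an arbitrary finite generating set of $H$, this gives $\delta(H)\geq \frac{\log 3}{6}$, and since $H$ is an arbitrary finitely generated exponentially growing subgroup of $G_\phi$, taking the infimum yields $\udelta(G_\phi)\geq \frac{\log 3}{6}$.
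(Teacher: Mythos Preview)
Your overall strategy coincides with the paper's: locate two elements of $S$--length at most $6$ inside $H\cap F_n$ that generate a nonabelian free group. The paper also works with commutators of generators, using $[s_i,s_j]$ together with their conjugates $s_k[s_i,s_j]s_k^{-1}$ rather than your $[s_i,s_j^2]$; both collections have length at most $6$ and either choice can be made to work.

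However, your proposal leaves the central step unproved. You correctly isolate the degenerate case in which all of your short commutators lie in a single maximal cyclic subgroup $\langle z\rangle\leq F_n$, but you do not show why this forces $H$ to have subexponential growth. The paper's argument here is short and you should supply it: from the degeneracy one deduces that $[H,H]$ itself is cyclic (one checks every $s_k$ normalizes $\langle z\rangle$, so the normal closure of the $[s_i,s_j]$ stays in $\langle z\rangle$), and then the extension $1\to\langle z\rangle\to H\to H^{\mathrm{ab}}\to 1$ together with $\Aut(\ZZ)=\{\pm 1\}$ shows an index--$2$ subgroup of $H$ is nilpotent, contradicting exponential growth. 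Your remark about $\det\Phi^{\mathrm{ab}}=\pm 1$ preventing $BS(1,m)$ with $|m|\geq 2$ is beside the point here and the reasoning is not quite right: the determinant is a global statement about $H_1$, whereas the relevant fact is that conjugation by any $g\in G_\phi$ restricts to an automorphism of $F_n$ and automorphisms of free groups preserve primitivity, so $gag^{-1}=a^m$ with $a\in F_n$ forces $m=\pm 1$. In any case this is unnecessary once you use the nilpotent--quotient argument above. With that step inserted (and the observation that with your elements the identity $[s_i,s_j^2]=[s_i,s_j]\cdot s_j[s_i,s_j]s_j^{-1}$ is what gives $s_j\langle z\rangle s_j^{-1}=\langle z\rangle$), your argument goes through and matches the paper's.
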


\begin{proof}
Write $G_\phi=F_n\rtimes_\Phi \ZZ$ where $\Phi \in \Aut(F_n)$ represents the outer automorphism $\phi$.

Suppose $H$ is a finitely generated subgroup of $G_\phi$ with exponential growth and let $S$ be  a finite generating set for $H$.  The commutator subgroup $[H,H]$ is contained in $F_n$ and is normally generated by $C=\{[s_i,s_j]\mid s_i,~s_j\in S\}$.  As $H$ has exponential growth we must have that $[H,H]$ is nontrivial.

Suppose that $[H,H]=\I{c}$ is infinite cyclic, generated by some element $c \in F_n$.  If so, there is a short exact sequence 
\[ 1 \to \I{c} \to H \to A \to 1 \] where $A$ is a finitely generated abelian group. In particular, an index two subgroup of $H$ is nilpotent.  This implies that $H$ has subexponential growth, contrary to our hypothesis. 

Therefore, $[H,H]$ is not cyclic and hence free and nonabelian.  Thus we can find two elements of $[H,H]$ of length at most $6$ (we may have to conjugate some $[s_i,s_j]$ by an element of $S$) that generate a group isomorphic to $F_2$, so $\delta(H,S) \geq \frac{1}{6}\log 3$.

As $S$ was an arbitrary generating set for $H$, we have $\delta(H) \geq \frac{1}{6}\log 3$.
\end{proof}

Lemmas~\ref{lem:FbyZ subexponential subgroups} and \ref{lem:uniform uniform FbyZ} show that free-by-cyclic groups have Property $U$.

\subsection{Geometrically linear unipotent outer automorphisms}\label{subsec:primitive free and glu}

In this section we will define a geometrically linear unipotent outer automorphism of a free group and prove that if $\omega(X) = 0$ for every $G_\phi$--complex $X$, then some power of $\phi$ is geometrically linear unipotent.

\begin{definition}\label{def:primitive free}
A \emph{primitive free splitting of $F_n$} is a graph of groups decomposition of $F_n$ where each vertex group is $\ZZ$ and each edge group is trivial. 
\end{definition}

Given a primitive free splitting of $F_n$, $\calF = (Y',\{F_v\},\{F_e\},\{i_e\})$, a model for the corresponding graph of spaces is obtained by attaching a loop edge $\alpha_v$ to each vertex $v \in VY'$.  We will denote this graph by $K(\calF)$.  By choosing a maximal tree $T\subseteq Y'$ (equivalently a maximal tree in $K(\calF)$, and fixing generators $a_v \in F_v$, a subset $E_+(Y'-T) \subset EY' - ET$ that contains one edge from each pair $\{e,\bar {e}\} \subseteq EY' - ET$ and a vertex $v_0\in VY'$, the set \[\{a_v \mid v \in VY' \} \cup \{ x_e \mid e \in E_+(Y' - T) \} \] is a basis for $F_n$ via the isomorphism $F_n \cong \pi_1(K(\calF),v_0)$.
 
\begin{definition}\label{def:tubular aut}
An outer automorphism $\phi \in \Out(F_n)$ is \emph{geometrically linear unipotent} (GLU) if there is a representative $\Phi \in \Aut(F_n)$, a primitive free splitting of $F_n$, $\calF = (Y',\{F_v\},\{F_e\},\{i_e\})$ of $F_n$, a maximal tree $T \subseteq Y'$ and vertex $v_0 \in VY'$ such that the following holds. 
\begin{enumerate}
\item For every $e \in ET$ where $o(e)$ lies between $v_0$ and $\tau(e)$, there is an integer $p_e$.
\item For each $v \in VY'$, $\Phi(a_v) = w_v a_v w_v^{-1}$ where $(e_1,e_2,\ldots,e_m)$ is the minimal length edge path in $T$ from $v_0$ to $v$ and \[ w_v = a_{o(e_1)}^{p_{e_1}}a_{o(e_2)}^{p_{e_2}} \cdots a_{o(e_m)}^{p_{e_m}}. \]
\item For each $e \in E_+(Y'-T)$, $\Phi(x_e) = w_{o(e)}a_{o(e)}^{q_e}x_ea_{\tau(e)}^{r_e}w_{\tau(e)}^{-1}$ for some $q_e,r_e \in \ZZ$.
\end{enumerate}
\end{definition}

\begin{example}\label{ex:Primitive}
Consider the primitive free splitting of $F_3 = \I{a,b,c}$, where the underlying graph has two vertices $v_1,v_2$ and two (geometric) edges $e_1,e_2$ where $o(e_1) = \tau(e_2) = v_1$ and $\tau(e_1) = o(e_2) = v_2$, and where the vertex groups are $F_{v_1} = \I{a}$ and $F_{v_2} = \I{b}$.  Let $T$ be the single edge $e_1$. See Figure \ref{fig:Primitive} below.

Fix integers $p$, $q$, and $r$.  The automorphism
\begin{equation*}
\Phi(a) = a; \quad \Phi(b) = a^p ba^{-p}; \quad \Phi(c) = a^p b^q c a^r
\end{equation*}
represents a GLU outer automorphism.
\end{example}

\begin{figure}[h]
    \centering
    \begin{tikzpicture}[every loop/.style={}]
        \draw[red,directed] (-1,0) arc (150:30:1);
        \draw[rdirected] (-1,0) arc (210:330:1);
        
        \filldraw (-1,0) circle (2pt);
        \filldraw (.75,0) circle (2pt);
        
        \node[left] at (-1,0) {\scriptsize $\I{a}$};
        \node[right] at (.75,0) {\scriptsize $\I{b}$};
        
        \draw[red,directed] (4,0) arc (150:30:1);
        \draw[rdirected] (4,0) arc (210:330:1);
        
        \draw[directed] (4,0)  to[in=270,out=220](2.75,0) to[in=140, out=90] (4,0);
        \draw[rdirected] (5.75,0)  to[in=270,out=320](7,0) to[in=40, out=90] (5.75,0);
        
        \fill (4,0) circle[radius=2pt];
        \fill (5.75,0) circle[radius=2pt];
        
        \node[left] at (2.75,0) {\scriptsize $a$};
        \node[right] at (7,0) {\scriptsize $b$};

    \end{tikzpicture}
    \caption{The primitive free splitting of Example \ref{ex:Primitive} is shown on the left, and its associated geometric realization on the right. The tree $T$ is colored in red.  }\label{fig:Primitive} 
\end{figure}
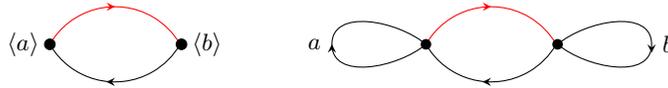

\begin{remark} \label{rmk:tubular form}
From the definition, we see that GLU outer automorphisms are linearly growing. Moreover, there exists a topological representative $f \from  K(\calF) \to K(\calF)$ for $\phi$ defined by the following.
\begin{enumerate}
\item For every $v\in VY'$, $f(\alpha_v)=\alpha_v$.
\item For every $e\in ET$ where $o(e)$ lies between $v_0$ and $\tau(e)$,  $f(e)=\alpha_{o(e)}^{p_e} e$.
\item For every $e\in E_+(Y' - T)$, $f(e)=\alpha_{o(e)}^{q_e} e \alpha_{\tau(e)}^{r_e}$.
\end{enumerate}
Conversely, any outer automorphism that has such a geometric representative is GLU.
\end{remark}

Thus, each GLU automorphism has a geometric representative of a highly restrictive form.  In the action of a GLU automorphism on the abelianization of $F_n$, it will be represented by a linearly growing unipotent matrix. Of course, not every automorphism with this property will be GLU.  The terminology \emph{geometrically} linear unipotent is meant to indicate that even on the level of homotopy the automorphism resembles a linear unipotent automorphism.

\begin{proposition}\label{prop:omega = 0 -> glu}
If $\omega(X)=0$ for every $G_\phi$--complex $X$, then some power of $\phi$ is geometrically linear unipotent. 
\end{proposition}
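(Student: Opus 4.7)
The plan is to combine Theorem~\ref{thm:zero iff graph of groups} with Bass--Serre theory to extract a primitive free splitting of $F_n$ on which a power of $\Phi$ acts in GLU fashion. First I would verify that $G_\phi$ satisfies the hypotheses of Theorem~\ref{thm:zero iff graph of groups}: by Lemma~\ref{lem:FbyZ subexponential subgroups} every subexponentially growing subgroup of $G_\phi$ belongs to $\{\{1\}, \ZZ, \ZZ^2, BS(1,-1)\}$, and by Lemma~\ref{lem:uniform uniform FbyZ} we have $\udelta(G_\phi) \geq \frac{1}{6}\log 3 > 0$. Assuming $\gdim(G_\phi) = 2$ (the cases $\gdim(G_\phi) \leq 1$ being handled separately, where $\phi$ is trivially GLU after passing to a power), the hypothesis that $\omega(X) = 0$ for every $G_\phi$--complex $X$ yields, via Theorem~\ref{thm:zero iff graph of groups}, a graph of groups decomposition $G_\phi \cong \pi_1(\calG)$ with vertex groups in $\{\ZZ, \ZZ^2, BS(1,-1)\}$ and edge groups in $\{\{1\}, \ZZ\}$.

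Second, I would let $\tT$ denote the Bass--Serre tree of $\calG$ and restrict the $G_\phi$--action to $F_n$. Since $F_n$ is free, its intersection with any $\ZZ^2$ or $BS(1,-1)$ subgroup is cyclic, so the induced $F_n$--action on $\tT$ has all vertex and edge stabilizers cyclic (trivial or $\ZZ$). Then I would argue that this $F_n$--splitting simplifies to a primitive free splitting $\calF$ of $F_n$: any non-loop edge with nontrivial cyclic edge group would produce an amalgamated subgroup $\langle a, b \mid a^k = b^l \rangle$, which embeds into a free group only when $|k|=1$ or $|l|=1$, in which case the edge is collapsible. Loop edges with nontrivial cyclic edge stabilizer would produce Baumslag--Solitar subgroups $BS(k,\pm l)$ with $k, l \neq 0$, none of which are free. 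After collapsing the collapsible edges, only trivial edge groups remain, yielding a primitive free splitting $\calF = (Y',\{F_v\},\{F_e\},\{i_e\})$ of $F_n$.

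Third, I would choose a power $k$ so that $\Phi^k$ realizes the GLU form on $\calF$. Normality of $F_n$ in $G_\phi$ induces an action of the cyclic quotient $G_\phi/F_n = \langle t \rangle$ on the $F_n$--quotient of $\tT$; passing to a sufficiently high power $k$, the element $t^k$ stabilizes each $G_\phi$--orbit of vertices and edges in $\calF$ and fixes each vertex generator $a_v$ up to conjugation. In particular, for $BS(1,-1)$--type vertices the conjugation by $t$ inverts the chosen generator $a_v \in F_n$, so an even power is required to make $\Phi^k(a_v)$ conjugate to $a_v$ rather than to $a_v^{-1}$. Fixing a maximal tree $T \subseteq Y'$ and a basepoint $v_0$, I would then choose coset representatives in $G_\phi$ so that conjugation by $t^k$ sends $a_v$ to $w_v a_v w_v^{-1}$, where $w_v$ is the product along the tree path from $v_0$ to $v$ of generators $a_{o(e_i)}$ raised to integer powers $p_{e_i}$ recording how the edge group of $e_i$ sits in the $\ZZ^2$ or $BS(1,-1)$ vertex group at $o(e_i)$. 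An analogous computation for loop generators $x_e \in E_+(Y'-T)$ yields $\Phi^k(x_e) = w_{o(e)} a_{o(e)}^{q_e} x_e a_{\tau(e)}^{r_e} w_{\tau(e)}^{-1}$, verifying the three GLU conditions of Definition~\ref{def:tubular aut}.

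The main obstacle will be the third step: it requires a careful Bass--Serre bookkeeping to show that the conjugating elements produced by the action of $t^k$ on $F_n$ are precisely the tree-path products of vertex generators prescribed in Definition~\ref{def:tubular aut}, rather than arbitrary elements of $F_n$ conjugating $a_v$ to $a_v$. One must judiciously choose the basepoint $v_0$ and a transversal within $G_\phi$ for the cosets of $F_n$ so that the cocycle expressing $t^k x t^{-k}$ in the normal form of $\pi_1(\calG)$ unfolds to the required expression, and one must cleanly separate the exponent contributions arising from edge-group inclusions into the three distinct types of vertex group.
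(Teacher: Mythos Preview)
Your overall architecture matches the paper's: apply Theorem~\ref{thm:zero iff graph of groups}, restrict the Bass--Serre tree $\tY$ to $F_n$, produce a primitive free splitting, pass to a power of $t$ acting trivially on the quotient $Y' = F_n \backslash \tY$, and then read off the GLU form by chasing stabilizers along a maximal tree. The third step you flag as the main obstacle is indeed the computational heart, and the paper carries it out essentially as you describe.

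However, there is a genuine gap in your second step. Your argument that the $F_n$--splitting reduces to a primitive free splitting uses the language of ``loop edges'' versus ``non-loop edges'' in the quotient graph of groups $\calF$, and invokes that torus knot groups and Baumslag--Solitar groups do not embed in $F_n$. But this reasoning already presupposes that $Y' = F_n \backslash \tY$ is a \emph{finite} graph, so that $\calF$ is a bona fide finite graph of groups to which such arguments apply. Finiteness of $Y'$ is not automatic: $F_n$ has infinite index in $G_\phi$, and the cyclic quotient $\langle t \rangle$ acts on $Y'$ with finite quotient $Y$, but this is perfectly compatible with $Y'$ being infinite (think of $\ZZ$ acting by translation on a line). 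Concretely, $Y'$ is finite if and only if for every edge $e$ of $\tY$ the $G_\phi$--stabilizer $\langle g \rangle \cong \ZZ$ satisfies $g \notin F_n$; equivalently, the $F_n$--edge stabilizers in $\tY$ are trivial. Your torus knot and Baumslag--Solitar considerations do not establish this: they rule out certain configurations in the quotient, not in the tree. Without finiteness you also cannot ``pass to a sufficiently high power $k$'' so that $t^k$ acts trivially on $Y'$, so the third step cannot begin.

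The paper fills exactly this gap by invoking a result of Brinkmann~\cite{ar:Brinkmann00}: for any splitting of a free-by-cyclic group over $\ZZ$, the induced graph of groups decomposition of $F_n$ has finite underlying graph and trivial edge groups. The paper reduces to single-edge splittings to apply Brinkmann's result and then reassembles. Once you have this, your outline goes through; note also that the paper first passes to the power $t^k$ acting trivially on $Y'$ and only \emph{then} collapses to make $\calF$ reduced, so that the collapses are compatible with the $G_\phi$--action (collapsing first, as you suggest, would require checking that the collapses can be done $\langle t \rangle$--equivariantly).
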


\begin{proof}
Write $G_\phi=F_n\rtimes_\Phi \ZZ = \I{F_n,t \mid tat^{-1} = \Phi(a), \, \forall a \in F_n}$ where $\Phi \in \Aut(F_n)$ represents the outer automorphism $\phi$.  Suppose that $\omega(X) = 0$ for every $G_\phi$--complex $X$.  We will show that the automorphism obtained by replacing $\Phi$ with a power and composing the result by an inner automorphism satisfies the conditions in Definition~\ref{def:tubular aut}.  On the level of the presentation, this is accomplished by replacing $t$ with $at^k$ for some $k \in \ZZ$ and $a \in F_n$.

As $\gdim(G_\phi) = 2$, Lemmas~\ref{lem:FbyZ subexponential subgroups} and~\ref{lem:uniform uniform FbyZ} show that we may apply Theorem~\ref{thm:zero iff graph of groups} to $G_\phi$.  Since $G_\phi$ is 1--ended, Theorem~\ref{thm:zero iff graph of groups} yields a graph of groups decomposition $\calG = (Y,\{G_v\},\{G_e\},\{h_e\})$ of $G_\phi$ where all vertex groups belong to the collection $\{\ZZ, \ZZ^2, BS(1,-1)\}$ and every edge group is equal to $\ZZ$.  We may assume that $\calG$ is reduced. 

Let $\tY$ be the tree corresponding to the decomposition $G_\phi \cong \pi_1(\calG)$, so that $G_\phi\backslash \tY=Y$.  The normal subgroup $F_n \subset G_\phi$ acts on $\tY$ and the quotient $Y'=F_n\backslash \tY$ yields a graph of groups decomposition $\calF=(Y',\{F_v\},\{F_e\},\{i_e\})$ of $F_n$.  Let $\pi\colon Y'\rightarrow Y$ be the quotient map induced the action of $\I{t}$.  Following an observation of Brinkmann \cite{ar:Brinkmann00}, we claim:

\begin{claim}
The graph $Y'$ is finite, each vertex group $F_v$ belongs to the collection $\{\{1\},\ZZ\}$ and each edge group $F_e$ is trivial.
\end{claim}

Indeed, the quotient map $\pi$ induces an injection on edge and vertex groups.  In particular, as $\calF$ is a decomposition of $F_n$ and all vertex groups in $\calG$ are either $\ZZ,~\ZZ^2$ or $BS(1,-1)$, we must have that each vertex group of $\calF$ is either $\ZZ$ or trivial.  

For any edge $e$ of $Y$, the map $\rho_e \from Y \to Y_e$ that collapses the components of the complement of $e$ induces a graph of groups decomposition $\calG_e$ for $G_\phi$ with a single edge where the edge group is $\ZZ$, i.e., a splitting over $\ZZ$.  That is, there is a tree $\tY_e$ and $G_\phi$--equivariant map $\widetilde{\rho}_e \from  \tY \to \tY_e$ with connected fibers inducing the map $\rho_e$ on quotients where $G_\phi \backslash \tY_e$ consists of a single edge.  In particular, the edge group of $e$ is the same in either graph of groups.  Brinkmann showed that for any splitting of a free-by-cyclic group over $\ZZ$, the induced graph of groups decomposition on $F_n$, $\calF_e = (Y'_e,\{F'_v \},\{F'_e\},\{i'_e\})$ has $Y'_e$ a finite graph and $F'_e = \{1\}$ for each edge~\cite[Section~1]{ar:Brinkmann00}.   
Therefore for each $e$ we have a pullback square
\[\xymatrix{Y'\ar[r]^\pi\ar[d]_{\rho_e'} & Y\ar[d]^{\rho_e}\\ Y_e'\ar[r]_{\pi_e}&Y_e}\]
The map $\rho'_e \from Y' \to Y'_e$ collapses the components of the complement of $\pi^{-1}(e)$.  As above, the edge groups for $\calF_e$ are the same as the corresponding edge groups in $\calF$.  Hence the edge group for an edge in $\pi^{-1}(e)$ is trivial.  As $e$ was arbitrary, this shows that the edges groups in $\calF$ are all trivial.  Further, as $Y'_e$ is a finite graph, we see that $\pi^{-1}(e)$ consists of finitely many edge for any edge $e$ of $Y$.  This shows that $Y'$ is a finite graph.  This proves the claim.

\medskip 

Since $Y'$ is finite, the stable letter $t$ acts on $Y'$ by a finite order automorphism.  Thus some power of $t$, $t^k$, acts as the identity on $Y'$ and also on each vertex group $F_v$ since each such group has at most two automorphisms.  We replace $G_\phi$ with the finite index subgroup $G_{\phi^k}$.  For the action of $G_\phi$ on $\tY$ we now have that $G_\phi \backslash \tY = Y' = F_n \backslash \tY$.  We continue to denote the graph of group decomposition of $G_\phi$ by $\calG$.  

The graph of groups decomposition $\calF$ may not be reduced.  As $t$ acts trivially on $Y'$, if an edge $e$ is collapsible for $\calF$, it is also collapsible for $\calG$.  This follows as the $G_\phi$--stabilizer of an edge in $\tY$ is generated by $at$ for some $a \in F_n$ since $t$ acts as the identity on $Y'$ and such an element does not have a proper root.  Therefore, we may collapse edges in $Y'$ so that $\calF$ is reduced.  We will continue to denote by $Y'$ the underlying graph.

% As $\calG$ is reduced, so is $\calF$.  Indeed, suppose that $e$ is an edge of $Y$ that is collapsible for $\calF$.  Thus both groups $F_v$ and $F_e$ are trivial.  The group $G_e$ is equal to $\ZZ$ and generated by some element $g = at^m \in G_\phi$ for some $m \in \ZZ$ and $a \in F_n$.  No proper root of $g$ can lie in $G_v$ as $t$ acts as the identity on $Y$.  Thus, if $G_v$ does not equal $\I{g}$, then there exists some element $h = bt^k$ for some $k \in \ZZ$ and $b \in F_n$ such that $\I{g,h}$ is either $\ZZ^2$ or $BS(1,-1)$.  In this case, the element $g^kh^{-m}$ is nontrivial and lies in $F_v$.  This contradiction implies that $G_v$ does equal $G_e$ and so the edge $e$ is collapsible in $\calG$ too.     

If $Y'$ has a single vertex $v$ and $F_v$ is trivial, then we collapse one of the incident loops and change the vertex group to $\ZZ$.  Hence $F_v = \ZZ$ for all $v\in Y'$ and $F_e=\{1\}$ for all $e \in EY'$.  In other words, $\calF$ is a primitive free splitting. 

Let $T\subseteq Y'$ be a maximal tree.  Choose a basepoint $v_0\in T$ and fix a subset $E_+(Y' - T) \subset EY' - ET$ that contains one edge from each pair $\{e,\bar{e}\}$. A lift $\tT \subseteq\tY$ of $T$, determines for each $v\in \tT$ an element $a_v$ that generates $F_v$.  For each $e\in E_+(Y' - T)$ we obtain a hyperbolic element $x_e \in F_n$ that identifies an edge $\tilde{e}_1$---which is a lift of $e$---at the lift of $o(e)$ in $\tT$, with an edge $\tilde{e}_2$---which is also a lift of $e$---at the lift of $\tau(e)$ in $\tT$. See Figure \ref{fig:translate}.

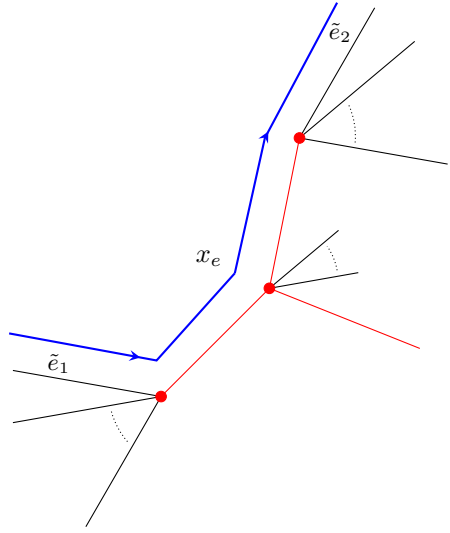
\begin{figure}[ht]
    \centering
    \begin{tikzpicture}[scale=2]
        \draw[red] (-.72,-.72) -- (0,0);
        \draw[red] (0,0)--(1,-.4);
        \draw[red] (0,0)--(.2,1);
       
        \draw[shift={(-.72cm,-.72cm)}] (0,0) -- (170:1);
        \draw[shift={(-.72cm,-.72cm)}] (0,0) -- (240:1);
        \draw[shift={(-.72cm,-.72cm)}] (0,0) -- (190:1);
        \draw[densely dotted,shift={(-.33,-.1)}] (-.72,-.72) arc (195:222:.5);
        
        \draw[shift={(.2,1)}] (0,0) -- (60:1);
        \draw[shift={(.2,1)}] (0,0) -- (40:1);
        \draw[shift={(.2,1)}] (0,0) -- (-10:1);
        \draw[densely dotted,shift={(.37,-.03)}] (.2,1) arc (-5:25:.5);
        
        \draw (0,0) -- (40:.6);
        \draw (0,0) -- (10:.6);
        \draw[densely dotted,shift={(.45,.12)}] (0,0) arc (10:35:.4);
        \draw[thick, blue, directed, shift={(-.03,.1)}] (-1.7,-.4)-- (-.72,-.58) -- (-.2,0);
        \draw[thick, blue, directed, shift={(-.03,.1)}] (-.2,0) -- (0,.9)--(.48,1.8);

        % \draw[blue, shift={(-.1,.1)}] (-.72,-.72) -- (0,0);
        % \draw[blue, shift={(-.1,.1)}] (0,0)--(.2,1);
        % \draw[blue, shift={(-.82,-.62)}] (0,0) -- (170:1);
        % \draw[blue, shift={(.1,1.1)}] (0,0) -- (60:1);

        \filldraw[red] (.2,1) circle (1pt) ;
        \filldraw[red] (0,0) circle (1pt);
        \filldraw[red] (-.72,-.72) circle (1pt);
        
        \node at (.47,1.7) {\scriptsize $\tilde{e}_2$};
        \node at (-1.4,-.5) {\scriptsize $\tilde{e}_1$};
        \node at (-.4,.2) {\footnotesize $x_e$};
\end{tikzpicture}
\caption{The lifts $\tilde{e}_1$ and $\tilde{e}_2$ of $e\in E_+(Y' - T)$ are identified by the hyperbolic translation $x_e$.  The axis for the action of $x_e$ is shown in blue, while a portion of $\tT$ is shown in red.}\label{fig:translate}
    
\end{figure}

Since $t$ acts trivially on $Y'$, for every point $x\in \tY$, there exists $g \in F_n$ such that $(gt).x=x$.  In particular, for each vertex $\tilde{v}\in \tT$, we can find $g_v\in F_n$ such that $(g_vt ).\tilde{v}=\tilde{v}$.  By the choice of $t$, this implies that $(g_v t)a_v(g_v t)^{-1} = a_v$ for all $v$, or in other words, that all vertex stabilizers are $\ZZ^2$. Moreover, replacing $t$ with $g_{v_0}t$, we may assume the stabilizer of $\tilde{v}_0$ is $\I{a_0,t}.$

Let $\gamma_v = (e_1,\ldots,e_m)$ be the minimal length edge path in $T$ from $v_0$ to $v$.  We claim that there exists integers $p_e$ such that for the word $w_v = a_{o(e_1)}^{p_{e_1}}a_{o(e_2)}^{p_{e_2}} \cdots a_{o(e_m)}^{p_{e_m}}$ we have $t.\tilde{v}=w_v.\tilde{v}$.  Note, the integers $p_e$ only depend on the edges and not the vertex $v$.  We prove this by induction on $m+1\geq 0$, where $w_{v_0}$ is understood to be trivial.  In the base case, we have already chosen $t$ so that $t.\tilde{v}_0=\tilde{v}_0$.  Suppose now that the claim holds for some $m\geq 0$, and that $\gamma_v$ has length $m+1$. Then removing the last edge of $\gamma_v$ is the path from $v$ to $v_m$, so there exist integers $p_e$ such that for the word $w_{v_m} =  a_{o(e_1)}^{p_{e_1}}a_{o(e_2)}^{p_{e_2}} \cdots a_{o(e_{m-1})}^{p_{e_{m-1}}}$ we have $t.\tilde{v}_{m}=w_{v_m}.\tilde{v}_{m}$. Let $\tilde{e}$ be the lift of $e_m$, which is the final edge of $\gamma_v$, to $\tT$. Since $w_{v_m}^{-1}t$ fixes $\tilde{v}_{m}$, there is some power $a_{v_m}^{-p_e}$ such that $a_{v_m}^{-p_e}(w_{v_m}^{-1}t).\tilde{e}=\tilde{e}$. Thus, $t.\tilde{e}=(w_{v_m}a_{v_m}^{p_e}).\tilde{e}$ and hence \[t.\tilde{v}=(w_{v_m}a_{o(e_m)}^{p_{e_m}}).\tilde{v}=(a_{o(e_1)}^{p_{e_1}}\cdots a_{o(e_{m-1})}^{p_{e_{m-1}}}a_{o(e_m)}^{p_{e_m}}.\tilde{v},\] as desired. It follows that for each $v$, $(w_v^{-1}t)a_v(t^{-1}w_v) = a_v$. Hence, $ta_vt^{-1}=w_v a_v w_v^{-1}$.  This proves that the $t$-action on the basis elements $a_v$ has the form indicated in Definition \ref{def:tubular aut}. 

Consider now an edge $e\in E_+(Y' - T)$ and let $\tilde{v}_1$ and $\tilde{v}_2$ be the lifts to $\tT$ of the vertices $v_{1} = o(e)$ and $v_2=\tau(e)$ respectively. There are lifts $\tilde{e}_1$ and $\tilde{e}_2$ of $e$ where $o(\tilde{e}_1) = \tilde{v}_{1}$ and $\tau(\tilde{e}_2) = \tilde{v}_{2}$ and such that $x_e . \tilde{e}_1 = \tilde{e}_2$.  Since $w_{v_1}^{-1}t$ fixes $\tilde{v}_{1}$, there exists a power $a_{v_1}^{-q_e}$ such that $(a_{v_1}^{-q_e}w_{v_1}^{-1}t) . \tilde{e}_1= \tilde{e}_1$. Similarly, there exists a power $a_{v_2}^{r_e}$ such that $(a_{v_2}^{r_e}w_{v_2}^{-1}t).\tilde{e}_2=\tilde{e}_2$. Then we obtain\[  \left( (t^{-1}w_{v_1}a_{v_1}^{q_e})x_e(a_{v_2}^{r_e}w_{v_2}^{-1}t)\right).\tilde{e}_1=x_e.\tilde{e}_2\]
Since edge stabilizers for the $F_n$--action are trivial we have that $x_e$ is the unique element of $F_n$ taking $\tilde{e}_1$ to $\tilde{e}_2$.  Hence, recalling that $v_1 = o(e)$ and $v_2 = \tau(e)$, we conclude that \[tx_et^{-1}=w_{o(e)}a_{o(e)}^{q_e} x_e a_{\tau(e)}^{r_e}w_{\tau(e)}^{-1}.\]

As explained in the beginning of the proof, this shows that some power of $\phi$ is geometrically linear unipotent. 
\end{proof}

\subsection{Proof of Theorem \ref{thm:FbyZ}}\label{subsec:FbyZ proof} 

There are two items left to show for Theorem~\ref{thm:FbyZ}.  We must show that if some power of an outer automorphism is geometrically linear unipotent (3), then $G_\phi$ is virtually tubular (2) and that if $G_\phi$ is virtually tubular 92), then $\omega(X) = 0$ for every $G_\phi$--complex $X$ (1).  Additionally, we must show that if none of these conditions holds that $\uomega(G_\phi)$ is bounded away from zero.  This last statement follows immediately from Proposition~\ref{prop:complementary} and Lemma~\ref{lem:uniform uniform FbyZ}.

\begin{proof}[Proof of Theorem~\ref{thm:FbyZ}]
First we show that (3) implies (2).  Replacing $\phi$ with a power replaces $G_\phi$ with a finite-index subgroup.  We will therefore assume $\phi$ is itself GLU, and prove that in this case $G_\phi$ is tubular. Let $\calF$, $T\subseteq Y'$, $v_0\in T$ and $\Phi$ be as in Definition \ref{def:tubular aut}.  These data give us a basis $\{a_v,x_e\}$, where $v\in VY'$ runs over the vertex set of $Y$, and $e\in E_+(Y' - T)$ runs over the edges in the complement of the maximal tree $T$. 

First we show that the subgroup $G_0=\I{t,a_v\mid v\in VY'}$ is tubular. For each $a_v$ we have $ta_vt^{-1}=\Phi(a_v)=w_va_vw_v^{-1}$.  Rearranging we obtain \[(w_v^{-1}t)a_v(t^{-1} w_v)a_v^{-1}=[w_v^{-1}t,a_v]=1,\]
or in other words $\I{a_v,w_v^{-1}t}\cong \ZZ^2$. Let $\gamma_v=e_1, \ldots, e_m$ be the minimal length edge path from $v_0$ to $v$ in $T$.  The condition that $\phi$ is GLU states that \[w_v=a_{o(e_1)}^{p_{e_1}}\cdots a_{o(e_m)}^{p_{e_m}}=w_{v'}a_{v'}^{p_{e_m}},\] for some collection of integers $p_e$ depending only on the edge $e$, and where $v'=o(e_m)$. In particular, $w_v^{-1}t\in \I{a_{v'},w_{v'}^{-1}t}$. We then build a graph of groups decomposition with graph $T$, vertex groups $\I{a_v,w_v^{-1}t}$ and edge groups $\ZZ$: the edge between $v$ and $v'$ identifies $\I{w_v^{-1}t}$ in each.  Thus $G_0$ is tubular.

To finish the proof, we show that adding the elements $x_e$ for each $ e\in E_+(Y' - T)$ exhibits $G_\phi$ as an iterated HNN-extension of $G_0$ over $\ZZ$ edge groups.  Each extension is independent for the others, so that the result is tubular.  For each element $x_e$, we have $tx_et^{-1}=\Phi(x_e)=w_{o(e)}a_{o(e)}^{q_e}x_ea_{\tau(e)}^{r_e}w_{\tau(e)}^{-1}$, or written differently:
\[x_e\left(a_{\tau(e)}^{r_e}w_{\tau(e)}^{-1}t\right)x_e^{-1}=w_{o(e)}^{-1}a_{o(e)}^{-q_e}t\]
Clearly, $a_{\tau(e)}^{r_e}w_{\tau(e)}^{-1}t\in \I{a_{\tau(e)},w_{\tau(e)}^{-1}t}$ while $w_{o(e)}^{-1}a_{o(e)}^{-q_e}t\in \I{a_{o(e},w_{o(e)}^{-1}t}$. We see that the addition of each edge $e\in E_+(Y' - T)$ adds a generator conjugating an infinite cyclic subgroups of the $\tau(e)$ vertex group to one of the $o(e)$ vertex group. Hence $G_\phi$ is tubular.  

Next, we observe that (2) implies (1).  If $G_\phi$ is virtually tubular, then by Lemma \ref{lem:comparison}, after passing to a tubular finite index subgroup, we have that $\omega(X) = 0$ for every $G_\phi$--complex by Theorem \ref{thm:zero iff graph of groups} as $\udelta(G_\phi) > 0$ (Lemma~\ref{lem:uniform uniform FbyZ}).  

Finally, by Proposition~\ref{prop:complementary} and Lemma~\ref{lem:uniform uniform FbyZ}, if $\omega(X) \neq 0$ for some $G_\phi$--complex $X$ then \[\uomega(G_\phi) \geq \frac{\udelta(G_\phi)}{2 \cdot \const} = \frac{\log 3}{12 \cdot \const}. \qedhere \]
\end{proof}

%%%%%%%%%%%%%%%%%%%%%%%%%%%%%%%%%%%%%%%%%%%%%%%%%%%%%%%%%%%%%%%%%%%%%%%%%%%%%

\section{2--Dimensional right-angled Artin groups}\label{sec:raags}

In this section we consider 2--dimensional right-angled Artin groups and prove Theorem~\ref{thm:raags}.  This theorem asserts the equivalence of the following three conditions for a 2--dimensional right-angled Artin group $A_\Gamma$.
\begin{enumerate}
\item $\omega(X) = 0$ for every $A_\Gamma$--complex $X$.
\item $A_\Gamma$ is a free product of tubular groups and a free group.
\item $\Gamma$ is a forest.
\end{enumerate}
The equivalence of (1) and (2) will follow from Theorem~\ref{thm:zero iff graph of groups}.  Indeed, that (1) is a consequence of (2) follows immediately from Theorem~\ref{thm:zero iff graph of groups} once we show that $\udelta(H) \geq \log 3$ for a nonabelian subgroup of a right-angled Artin group (Lemma~\ref{lem:uniform uniform raag}).  For the converse, we will show how to modify the graph of groups decomposition ensured by Theorem~\ref{thm:zero iff graph of groups} when $\omega(X) = 0$ for every $A_\Gamma$--complex $X$ in Lemma~\ref{lem:omega = 0 -> tubular} to get the requirements on the vertex groups.  In Lemmas~\ref{lem:tubular -> forest} and~\ref{lem:forest -> tubular} we establish the equivalence of (2) and (3).  Before we prove these lemmas, we recall a theorem of Baudisch regarding subgroups of a right-angled Artin group generated by two elements which is important for our analysis.

\subsection{Baudisch's theorem and consequences.}\label{subsec:baudisch}  
Baudisch proved that the subgroup generated by two elements in a right-angled Artin group is either free or abelian~\cite{ar:Baudisch81}.  We record two consequences of this fact.

\begin{lemma}\label{lem:2-subgroups of raags}
Groups in the follow classes are not isomorphic to a subgroup of a right-angled Artin group:
\begin{enumerate}
\item Baumslag--Solitar groups $BS(p,q) = \I{a,t \mid ta^pt^{-1} = a^q }$ if $p,q \neq 1$, 

\item torus knot groups $\ZZ \ast_{\ZZ} \ZZ = \I{x,y \mid x^p = y^q}$ if $p \neq 1$ or $q \neq 1$, and

\item amalgamated free products $\ZZ \ast_{\ZZ} \ZZ^2 = \I{x,y,z \mid x^p = w(y,z), \, yz = zy}$ where $w(y,z)$ is a nontrivial element of the subgroup $\I{y,z} \cong \ZZ^2$ if $p \neq 1$.
\end{enumerate}  
\end{lemma}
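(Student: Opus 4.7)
The plan is to prove each of the three non-embedding statements by contradiction, supposing in each case that the group $G$ admits an injective homomorphism $\iota \from G \inject A_\Gamma$ into some right-angled Artin group. The main tool will be Baudisch's theorem cited above, together with the fact that right-angled Artin groups are torsion-free. The strategy in each case is to apply Baudisch to a subgroup of $A_\Gamma$ generated by the images of two carefully chosen generators of $G$, and then analyze the resulting dichotomy.

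For (1), I will examine $H=\langle \iota(a),\iota(t)\rangle$. If $H$ is abelian, the relation $ta^pt^{-1}=a^q$ pushes to $\iota(a)^{p-q}=1$, which by torsion-freeness forces $\iota(a)=1$ when $p\neq q$, contradicting injectivity; the exceptional case $p=q$ is ruled out because $\iota(G)$ would then be abelian while $BS(p,p)$ is not for the $p$ covered by the hypothesis. If $H$ is free and non-abelian, then $\iota(a),\iota(t)$ form a basis of $F_2$, and $\iota(t)\iota(a)^p\iota(t)^{-1}\iota(a)^{-q}$ is a reduced nontrivial word in $F_2$, again contradicting the relation. Case (2) follows the same template with $H=\langle \iota(x),\iota(y)\rangle$: if $H\cong F_2$ then the word $\iota(x)^p\iota(y)^{-q}$ is a nontrivial reduced word, while if $H$ is abelian or cyclic then $\iota(G)$ is abelian, contradicting the non-abelianness of the torus knot group under the stated hypotheses.

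Case (3) requires a bit more care. The key observation is that $\iota(x)^p=\iota(w(y,z))$ lies in the abelian subgroup $\langle \iota(y),\iota(z)\rangle$, so $\iota(x)^p$ commutes with both $\iota(y)$ and $\iota(z)$. Applying Baudisch to $\langle \iota(x),\iota(y)\rangle$: if this is abelian then $[\iota(x),\iota(y)]=1$ immediately; if it is free, then the two commuting nontrivial elements $\iota(x)^p$ and $\iota(y)$ must lie in a common maximal cyclic subgroup of the free group, and since nontrivial elements of free groups admit unique roots, $\iota(x)$ itself lies in this cyclic subgroup, again giving $[\iota(x),\iota(y)]=1$. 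The symmetric argument with $z$ in place of $y$ yields $[\iota(x),\iota(z)]=1$, so the entire image $\iota(G)=\langle \iota(x),\iota(y),\iota(z)\rangle$ is abelian. This contradicts the non-abelianness of $\ZZ\ast_\ZZ\ZZ^2$ in the covered range, completing the argument.

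The main obstacle I anticipate is the free subcase in (3), where promoting $[\iota(x)^p,\iota(y)]=1$ to $[\iota(x),\iota(y)]=1$ rests on two structural facts about free groups --- that centralizers of nontrivial elements are cyclic, and that nontrivial elements admit unique roots --- which together give the equality $C(g^p)=C(g)$ for $g\neq 1$ and $p\neq 0$. Identifying the cleanest form of this statement and verifying it carefully will be the delicate step of the proof.
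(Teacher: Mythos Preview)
Your proof is correct and follows the same overall strategy as the paper: apply Baudisch's theorem to two-generator subgroups. For cases (1) and (2) your argument is exactly the paper's, just with the verification that the groups are neither free nor abelian written out explicitly; the paper dispatches both in a single sentence (``they are generated by two elements and are neither free nor abelian'').

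For case (3) you take a slightly different route. The paper observes that since $w\neq y$ or $w\neq z$, one of the subgroups $\langle x,y\rangle$ or $\langle x,z\rangle$ of $G$ itself is neither free nor abelian, and then Baudisch finishes. You instead work entirely in the image: you show that Baudisch, combined with the fact that centralizers in free groups satisfy $C(g^p)=C(g)$ for $g\neq 1$ and $p\neq 0$, forces both $\langle\iota(x),\iota(y)\rangle$ and $\langle\iota(x),\iota(z)\rangle$ to be abelian, whence $\iota(G)$ is abelian, contradicting injectivity. Your approach has the advantage of not needing to analyze what the subgroup $\langle x,y\rangle$ actually looks like inside the amalgam (a step the paper leaves implicit), at the cost of invoking the centralizer/root fact for free groups. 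Both arguments are short and valid; the free-group fact you flagged as the delicate step is standard and your application of it is correct.
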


\begin{proof}
The first two classes are immediate since they are generated by two elements and are neither free nor abelian.  For the third class of groups, notice that either $w(y,z) \neq y$ or $w(y,z) \neq z$.  Hence one of the subgroups $\I{x,y}$ or $\I{x,z}$ is neither free nor abelian if $p \neq 1$.
\end{proof}

Another consequence of Baudisch's theorem is that right-angled Artin groups have uniform uniform exponential growth (with a uniform constant).

\begin{lemma}\label{lem:uniform uniform raag}
Let $A_\Gamma$ be a right-angled Artin group.  Suppose $H$ is a finitely generated nonabelian subgroup of $A_\Gamma$.  Then $\delta(H) \geq \log 3$.  In particular, if $A_\Gamma$ is nonabelian, then $\udelta(A_\Gamma) \geq \log 3$.
\end{lemma}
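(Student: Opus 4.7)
The strategy is to apply Baudisch's theorem directly in order to produce an embedded copy of $F_2$ inside $H$ generated by two of its generators. Let $S$ be any finite generating set for $H$. Since $H$ is nonabelian, at least one pair $s_i, s_j \in S$ must fail to commute---otherwise every pair of generators would commute, forcing all of $H$ to be abelian. By Baudisch's theorem, the two-generator subgroup $\langle s_i, s_j \rangle$ is either free or abelian; since $s_i$ and $s_j$ do not commute, it cannot be abelian, so it is free, and since it is not cyclic, it is free of rank exactly two.

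Next I would compare word lengths. Every reduced word in $s_i^{\pm 1}, s_j^{\pm 1}$ is in particular a word in $S \cup S^{-1}$, so the ball of radius $t$ in $F_2 = \langle s_i, s_j \rangle$ with respect to its free basis embeds into the ball of radius $t$ in $H$ with respect to $S$. Counting elements and using the classical growth rate $\delta(F_2) = \log 3$ immediately gives $\delta(H, S) \geq \log 3$. As $S$ was an arbitrary finite generating set for $H$, taking the infimum yields $\delta(H) \geq \log 3$.

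For the ``in particular'' statement, suppose $A_\Gamma$ is nonabelian. Any finitely generated subgroup $H \subseteq A_\Gamma$ of exponential growth is necessarily nonabelian, since finitely generated abelian groups have polynomial growth. The first part of the lemma then gives $\delta(H) \geq \log 3$ for every such $H$, and taking the infimum yields $\udelta(A_\Gamma) \geq \log 3$. The entire argument rests on Baudisch's theorem, which is the only nontrivial ingredient; once invoked, the remainder is a routine length comparison, and there is no obstacle to overcome.
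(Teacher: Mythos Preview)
Your proof is correct and follows essentially the same line as the paper's own argument: pick two non-commuting generators, invoke Baudisch's theorem to get an embedded $F_2$ generated by elements of $S$, and conclude $\delta(H,S)\geq\log 3$. You have simply made explicit the length comparison and the ``in particular'' clause that the paper leaves implicit.
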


\begin{proof}
Suppose $A_\Gamma$ is a right-angled Artin group and that $H$ is a finitely generated nonabelian subgroup of $A_\Gamma$.  Let $S$ be a finite generating set for $H$.  As $H$ is nonabelian, there are two element $x,y \in S$ that do not commute.  Hence as $x$ and $y$ do not commute, they must generate a nonabelian free group by Baudisch's theorem and therefore $\delta(H,S) \geq \log 3$. 
\end{proof}

In particular, any subexponentially growing subgroup of a right-angled Artin group is abelian and hence right-angled Artin groups have Property $U$.

\subsection{Proof of Theorem~\ref{thm:raags}}\label{subsec:raags proof}
We can now prove the lemmas showing the equivalences of the three items in Theorem~\ref{thm:raags}.

\begin{lemma}\label{lem:omega = 0 -> tubular}
Suppose that $A_\Gamma$ is a right-angled Artin group with $\gdim(A_\Gamma) = 2$.  If $\omega(X) = 0$ for every $A_\Gamma$--complex $X$, then $A_\Gamma$ is a free product of tubular groups and a free group.
\end{lemma}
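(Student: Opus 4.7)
The plan is to apply Theorem~\ref{thm:zero iff graph of groups} to $A_\Gamma$ and then extract the desired free product decomposition from the resulting graph of groups, using Baudisch's theorem on two-generator subgroups of right-angled Artin groups to analyze the allowed configurations.

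First, since $\udelta(A_\Gamma) \geq \log 3 > 0$ by Lemma~\ref{lem:uniform uniform raag} and every subexponentially growing subgroup of $A_\Gamma$ is abelian by Baudisch's theorem, such subgroups belong to $\{\{1\}, \ZZ, \ZZ^2\}$; in particular $BS(1,-1)$ is two-generated and neither free nor abelian, hence not a subgroup of $A_\Gamma$. Theorem~\ref{thm:zero iff graph of groups} therefore produces a reduced graph of groups decomposition $\calG = (Y, \{G_v\}, \{G_e\}, \{h_e\})$ of $A_\Gamma$ with vertex groups in $\{\ZZ, \ZZ^2\}$ and edge groups in $\{\{1\}, \ZZ\}$.

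Next I would analyze how a $\ZZ$ edge can meet a $\ZZ$ vertex. For a non-loop $\ZZ$ edge $e$ incident to a $\ZZ$ vertex $v = o(e)$, if the inclusion $h_e \colon G_e \to G_v$ were an isomorphism then $e$ would be collapsible, contradicting reducedness; otherwise the subgroup of $A_\Gamma$ generated by $G_v$ and $G_{\tau(e)}$ is $\ZZ *_{\ZZ} \ZZ$ or $\ZZ *_{\ZZ} \ZZ^2$ with proper amalgamation on the $\ZZ$ side, which Lemma~\ref{lem:2-subgroups of raags} rules out. For a $\ZZ$ loop $e$ at a $\ZZ$ vertex $v$, the induced HNN-extension is a Baumslag--Solitar group $BS(p,q)$ with $p,q \neq 0$; Baudisch's theorem forces this subgroup to be free or abelian, so $p = q = \pm 1$ with matching signs (to avoid the excluded $BS(1,-1)$), and hence the stable letter of the loop commutes with a generator of $G_v$.

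Finally I would decompose $A_\Gamma$ by passing to the graph of groups $\calG'$ obtained by collapsing each $\ZZ$-edge component of $\calG$ to a vertex. The components $\calG_1, \ldots, \calG_m$ of the $\ZZ$-edge subgraph of groups (taken with all vertices of $Y$) provide vertex groups $V_i = \pi_1(\calG_i)$ for $\calG'$, and $\calG'$ has only trivial edge groups. Therefore $A_\Gamma \cong V_1 * \cdots * V_m * F$, where $F$ is a free group whose rank is the first Betti number of the underlying graph of $\calG'$. By the analysis above, each $V_i$ is either $\ZZ$ (an isolated $\ZZ$ vertex, absorbed into $F$), $\ZZ^2$ (trivially tubular), $\ZZ \times F_k$ arising from a $\ZZ$ vertex with $k$ loops---which admits a tubular presentation with one $\ZZ^2$ vertex and $k-1$ identity-HNN $\ZZ$-loops---or a tubular group with all $\ZZ^2$ vertices and $\ZZ$ edges. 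This exhibits $A_\Gamma$ as a free product of tubular groups and a free group. The main obstacle is reconciling the $\ZZ$ vertices of $\calG$ with the tubular-group definition, which requires $\ZZ^2$ vertex groups; the mixed configurations must be excluded via Lemma~\ref{lem:2-subgroups of raags} and reducedness, and the $\ZZ$-vertex-with-loops case must be rephrased through a distinct tubular presentation rather than read off directly from $\calG$.
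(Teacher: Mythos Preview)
Your proposal is correct and follows essentially the same approach as the paper: apply Theorem~\ref{thm:zero iff graph of groups} to obtain a reduced graph of groups with vertex groups in $\{\ZZ,\ZZ^2\}$ and edge groups in $\{\{1\},\ZZ\}$, use Baudisch's theorem (via Lemma~\ref{lem:2-subgroups of raags}) to force $\ZZ$-loops at $\ZZ$ vertices to yield $BS(1,1)\cong\ZZ^2$ and to rule out non-loop $\ZZ$-edges at $\ZZ$ vertices, and then read off the free-product decomposition along the trivial edges. The only organizational difference is that the paper modifies the graph of groups in place so that every vertex group becomes $\ZZ^2$, whereas you keep the graph as is, pass to the $\ZZ$-edge components, and recognize each as tubular (including the $\ZZ\times F_k$ case) or as a $\ZZ$ factor to be absorbed into the free part.
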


\begin{proof}
Let $A_\Gamma$ be a right-angled Artin group where $\gdim(A_\Gamma) = 2$ and let $X$ be an $A_\Gamma$--complex where $\omega(X) = 0$.  As $BS(1,-1)$ is not isomorphic to a subgroup of $A_\Gamma$ (Lemma~\ref{lem:2-subgroups of raags}) by Theorem~\ref{thm:zero iff graph of groups} we have that $A_\Gamma$ is the fundamental group of a graph of groups $\calG = (Y,\{G_v\},\{G_e\},\{h_e\})$ where the edge groups belong to the collection $\{\{1\},\ZZ\}$ and the vertex groups belong to the collection $\{\ZZ,\ZZ^2\}$.    

As in the proof of Theorem~\ref{thm:zero iff graph of groups}, we may assume that $\calG$ is reduced.  We will show that by altering $\calG$ that we can arrange that all of the vertex groups are $\ZZ^2$.  This will complete the proof of the lemma. 

To this end, suppose that there is a vertex $v$ in $Y$ with $G_v = \ZZ$.  Further suppose that there is an $e$ with $o(e) = \tau(e) = v$ and $G_e = \ZZ$.  Then vertex $v$ and edge $e$ correspond to an HNN-extension $\ZZ \ast_\ZZ \cong BS(p,q)$.  By Lemma~\ref{lem:2-subgroups of raags}, we must have $p=q=1$.  Hence, we can remove the edge $e$ and replace the vertex group $G_v$ with $\ZZ^2$.       

Next suppose that for every edge $e$ with $o(e) = v$ we have $G_e = \{1\}$.  In this case, we can add a new edge $e'$ that is a loop based at $v$ with group $G_{e'} = \{1\}$ and replace the vertex group $G_v$ with $\{1\}$.  The resulting graph of groups is not reduced.  Indeed, if it were then every edge incident on $v$ is loop which implies that $A_\Gamma$ is free.  Thus, there is some edge incident to $v$ that is collapsible.  When we collapse this edge we obtain a reduced graph of groups decomposition for $A_\Gamma$ which has fewer vertices with vertex group $\ZZ$ and has not created a vertex with vertex group $\{1\}$.   

Finally, we deal with the case that there are no loops at $v$ with edge group equal to $\ZZ$ and that there is an edge $e$ with $o(e) = v$ such that $G_e = \ZZ$.  Then the subgroup $G_{o(e)} \ast_{G_e} G_{\tau(e)}$ is either a torus knot group $\ZZ \ast_{\ZZ} \ZZ$ or an amalgamated free product $\ZZ \ast_{\ZZ} \ZZ^2$.  By Lemma~\ref{lem:2-subgroups of raags}, we must have that $h_e\from  G_e \to G_{o(e)}$ or $h_{\bar{e}} \from G_e \to G_{\tau(e)}$ is surjective, contradicting the assumption that the graph of groups decomposition is reduced.  
\end{proof}

\begin{lemma}\label{lem:tubular -> forest}
Suppose that $A_\Gamma$ is a right-angled Artin group with $\gdim(A_\Gamma) = 2$.  If $A_\Gamma$ is a free product of tubular groups and a free group, then $\Gamma$ is a forest.
\end{lemma}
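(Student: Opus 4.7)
The plan is to prove this by a Euler characteristic comparison, exploiting that tubular groups and $2$--dimensional RAAGs both have readily computable Euler characteristic. Write $A_\Gamma = T_1 \ast \cdots \ast T_k \ast F_m$ with each $T_j$ tubular; each $T_j$ is freely indecomposable and contains a $\ZZ^2$, so in particular each $T_j$ is not infinite cyclic.

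First I would reduce to a component analysis. Letting $\Gamma_1, \ldots, \Gamma_c$ be the connected components of $\Gamma$, the standard free product decomposition $A_\Gamma = A_{\Gamma_1} \ast \cdots \ast A_{\Gamma_c}$ holds. For each $\Gamma_i$ that is connected with at least two vertices, $A_{\Gamma_i}$ is freely indecomposable: this can be seen by an elementary argument using that two commuting nontrivial elements of a nontrivial free product must lie in a common conjugate of a factor, applied to pairs of adjacent vertices of $\Gamma_i$, which (by connectedness) forces all vertex generators into a single conjugate of a factor. By Grushko's theorem matching the two decompositions of $A_\Gamma$, each such $A_{\Gamma_i}$ is then isomorphic to one of the tubular factors $T_j$, while each isolated vertex of $\Gamma$ contributes a $\ZZ$ factor to $F_m$.

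Next I would compute Euler characteristics. By the graph-of-groups formula a tubular group has $\chi = 0$, since every vertex group is $\ZZ^2$ and every edge group is $\ZZ$, each with vanishing Euler characteristic. Hence $\chi(A_{\Gamma_i}) = 0$ for each $\Gamma_i$ with $|V\Gamma_i| \geq 2$. On the other hand, since $\gdim(A_\Gamma) = 2$ forces $\Gamma$ to be triangle-free, the Salvetti complex of $A_{\Gamma_i}$ is a $2$--complex with one vertex, $|V\Gamma_i|$ edges, and $|E\Gamma_i|$ two--cells (one per commuting pair), so that
\[
\chi(A_{\Gamma_i}) = 1 - |V\Gamma_i| + |E\Gamma_i|.
\]
Equating these yields $|E\Gamma_i| = |V\Gamma_i| - 1$, which combined with the connectedness of $\Gamma_i$ forces $\Gamma_i$ to be a tree. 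Single-vertex components are trivially trees, so $\Gamma$ is a forest.

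The main obstacle I anticipate is establishing the free indecomposability of $A_{\Gamma_i}$ for a connected $\Gamma_i$ with $|V\Gamma_i| \geq 2$. The commuting-elements argument sketched above is elementary but requires a careful statement of the fact that commuting nontrivial elements in a free product are conjugate into a common factor; alternatively one can invoke known one--endedness results for RAAGs on connected graphs with at least two vertices. Once this step is in hand, the remainder of the argument is a routine Euler characteristic computation.
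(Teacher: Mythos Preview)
Your argument is correct, and it takes a genuinely different route from the paper's. Both proofs share the Grushko reduction: since tubular groups are one-ended (equivalently, freely indecomposable and not infinite cyclic), they must match up with the $A_{\Gamma_i}$ for the connected components $\Gamma_i$ with at least two vertices. From there the paper invokes the JSJ decomposition of a one-ended right-angled Artin group over cyclic subgroups (a prior result of the second author): the JSJ vertex groups are $A_{\Gamma_0}$ for the biconnected components $\Gamma_0 \subseteq \Gamma_i$, and since JSJ vertex groups embed into conjugates of the vertex groups of \emph{any} cyclic splitting, the tubular structure forces each $A_{\Gamma_0}$ to be a subgroup of $\ZZ^2$, hence each biconnected component is a single edge and $\Gamma_i$ is a tree.

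Your Euler characteristic argument bypasses JSJ theory entirely: the graph-of-groups formula gives $\chi=0$ for any tubular group, while the $2$--dimensional Salvetti complex gives $\chi(A_{\Gamma_i}) = 1 - |V\Gamma_i| + |E\Gamma_i|$, and equating these forces the tree condition. This is more elementary and self-contained; the paper's approach, by contrast, yields finer structural information (it identifies exactly which subgraphs obstruct tubularity) at the cost of importing a nontrivial external result. Your anticipated obstacle---free indecomposability of $A_{\Gamma_i}$ for connected $\Gamma_i$ with at least two vertices---is handled adequately by either the commuting-elements argument you sketch or by the standard one-endedness fact the paper itself invokes.
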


\begin{proof}
Let $A_\Gamma$ be a 2--dimensional right-angled Artin group and suppose that $A_\Gamma \cong G_1 \ast \cdots \ast G_k \ast F_n$ where $G_1,\ldots,G_k$ are tubular groups and $F_n$ is a free group.  As tubular groups are one-ended, this decomposition must be the Grusko decomposition and thus we must have that $G_i \cong A_{\Gamma_i}$ for $i = 1,\ldots,k$ where $\Gamma_1,\ldots,\Gamma_k$ are the connected components of $\Gamma$ that have at least two vertices.  Therefore, it suffices to show that if $\Gamma$ is connected and $A_{\Gamma}$ is tubular, then $\Gamma$ is a tree.

To this end, suppose that we have a graph of groups decomposition of $A_{\Gamma}$ where each edge group is $\ZZ$ and each vertex groups is $\ZZ^2$.  We will use the calculation of the the JSJ--decomposition of a one-ended right-angled Artin group over cyclic subgroups by the second author~\cite[Section~3]{ar:Clay14}.  The vertex groups of the JSJ--decomposition of $A_{\Gamma}$ correspond to the biconnected components of $\Gamma$.  Specifially, vertex groups are of the form $A_{\Gamma_0}$ where $\Gamma_0 \subseteq \Gamma$ is a biconnected component.  The most important property of this decomposition is that the vertex groups are subgroups of conjugates of the vertex groups for any graph of groups decomposition of $A_{\Gamma}$ in which the edge groups are each equal to $\ZZ$~\cite[Lemma~3.3]{ar:Clay14}.  Hence $A_{\Gamma_0} = \ZZ^2$ for each biconnected component $\Gamma_0 \subseteq \Gamma$ and thus every biconnected component is a single edge.  Therefore every edge of $\Gamma$ is separating and hence $\Gamma$ is a tree.
\end{proof}

\begin{lemma}\label{lem:forest -> tubular}
Suppose that $A_\Gamma$ is a right-angled Artin group with $\gdim(A_\Gamma) = 2$.  If $\Gamma$ is a forest, then $A_\Gamma$ is the free product of tubular groups and a free group.
\end{lemma}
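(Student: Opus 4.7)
The plan is to separate the forest into its connected components and reduce to the case of a single tree, then induct on the number of edges of the tree.

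First, if $\Gamma$ is a forest, decompose it as $\Gamma = \Gamma_1 \sqcup \cdots \sqcup \Gamma_k \sqcup \Gamma_{\mathrm{iso}}$ where each $\Gamma_i$ is a tree with at least two vertices and $\Gamma_{\mathrm{iso}}$ is the set of isolated vertices. Since right-angled Artin groups split as free products along disconnected unions of the defining graph, we have $A_\Gamma \cong A_{\Gamma_1} \ast \cdots \ast A_{\Gamma_k} \ast F_n$, where $n = |\Gamma_{\mathrm{iso}}|$. Thus it suffices to show that $A_T$ is tubular whenever $T$ is a finite tree with at least two vertices.

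I will prove this by induction on the number of edges of $T$. In the base case, $T$ is a single edge joining two vertices $v, w$, so $A_T = \I{v,w \mid vw = wv} \cong \ZZ^2$, which is tubular via the trivial graph of groups with one vertex (group $\ZZ^2$) and no edges. For the inductive step, suppose $T$ has at least two edges and let $v$ be a leaf of $T$ with unique neighbor $w$. Setting $T' = T \setminus \{v\}$, one has a visual splitting $A_T \cong A_{T'} \ast_{\I{w}} \I{v,w}$, where $\I{v,w} \cong \ZZ^2$ and $\I{w} \cong \ZZ$. By induction, $A_{T'}$ admits a graph of groups decomposition $\calG'$ with all vertex groups $\ZZ^2$ and all edge groups $\ZZ$. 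Since $T'$ has at least two vertices and is connected, the vertex $w$ has a neighbor $w'$ in $T'$, so $\I{w,w'} \cong \ZZ^2$ appears as a vertex group of $\calG'$ and contains $\I{w}$ as a $\ZZ$-subgroup. To build the desired decomposition of $A_T$, attach to $\calG'$ a new vertex with vertex group $\I{v,w} \cong \ZZ^2$ and a new edge with edge group $\I{w} \cong \ZZ$, joining the new vertex to the existing $\I{w,w'}$--vertex with the obvious inclusions. This produces a graph of groups decomposition for $A_T$ with all vertex groups $\ZZ^2$ and all edge groups $\ZZ$, showing $A_T$ is tubular.

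The only delicate point is ensuring that the inductive hypothesis gives a vertex group of $\calG'$ into which $\I{w}$ embeds as a coordinate $\ZZ$; this is immediate from the inductive construction once one observes that every vertex group $\ZZ^2$ of $\calG'$ is generated by a pair of vertices of $T'$ spanning an edge, so inclusions of $\I{w}$ into vertex groups are canonical. Assembling the free product decomposition from the first paragraph with the tubular decompositions of the $A_{\Gamma_i}$ from the induction then yields that $A_\Gamma$ is a free product of tubular groups and a free group, completing the proof.
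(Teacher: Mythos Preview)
Your proof is correct. Both your argument and the paper's arrive at the same tubular decomposition of $A_T$---one with a vertex group $\langle a,b\rangle\cong\ZZ^2$ for each edge $\{a,b\}$ of $T$ and an edge group $\langle w\rangle\cong\ZZ$ for each internal vertex $w$---but by different routes. The paper builds this decomposition directly: it subdivides $\Gamma$, puts $\ZZ$ on the original vertices and $\ZZ^2$ on the barycenters, checks that $\pi_1$ of the resulting graph of groups is $A_\Gamma$, and then collapses one edge at each original vertex to remove the $\ZZ$ vertex groups. You instead peel off leaves one at a time and use the visual splitting $A_T\cong A_{T'}\ast_{\langle w\rangle}\langle v,w\rangle$.

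The main thing to be careful about in your version is exactly what you flag at the end: the bare statement ``$A_{T'}$ is tubular'' is not strong enough to let you attach the new edge, since an arbitrary tubular decomposition of $A_{T'}$ need not have $\langle w,w'\rangle$ as a vertex group. What you actually need (and implicitly use) is the strengthened hypothesis that the vertex groups of $\calG'$ are precisely the subgroups $\langle a,b\rangle\subset A_{T'}$ for edges $\{a,b\}$ of $T'$. It would be cleaner to state this as your inductive hypothesis from the outset rather than as an afterthought; once that is done the inductive step is immediate. The paper's global construction avoids this bookkeeping at the cost of a slightly less elementary setup.
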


\begin{proof}
As in the proof of Lemma~\ref{lem:tubular -> forest}, it suffices to show that if $\Gamma$ is a nontrivial tree, then $A_{\Gamma}$ is tubular.  We do so by showing that $A_{\Gamma}$ is the fundamental group of a graph of groups $\calG = (Y,\{G_v\}, \{G_e\},\{h_e\})$ where all of edges groups are equal to $\ZZ$ and all of the vertex groups are equal to $\ZZ^2$.

Let $\widehat{Y}$ by the graph obtained by subdivision of $\Gamma$.  We will first construct a graph of groups $\widehat{\calG}$ with underlying graph $\widehat{Y}$ so that $\pi_1(\widehat{\calG}) = A_\Gamma$.  As $\widehat{Y}$ is the subdivision of $\Gamma$, we can consider $V\Gamma$ as a subset of $V\widehat{Y}$.  For each vertex $v \in V\Gamma$ we set $G_v = \ZZ$ and for each vertex $v \in V\widehat{Y} - V\Gamma$ we set $G_v = \ZZ^2$.  In the latter case, there are exactly two edges $e,e' \in E\widehat{Y}$ with $o(e) = o(e') = v$.  Decompose $G_e$ into the direct sum of two copies of $\ZZ$ denoted $\ZZ_e$ and $\ZZ_{e'}$ respectively so that $G_v = \ZZ_e \oplus \ZZ_{e'}$.  For each edge $e \in E\widehat{Y}$ we set $G_e = \ZZ$.  The inclusion maps $h_e \from G_e \to G_{o(e)}$ are defined as follows.  If $o(e)$ lies in $V\Gamma$, then $G_{o(e)} = \ZZ$ and we define $h_e \from G_e \to G_{o(e)}$ to be an isomorphism.  Else we have that $G_{o(e)} = \ZZ^2 = \ZZ_e \oplus \ZZ_{e'}$ and we define $h_E \from G_e \to G_{o(e)}$ to have $\ZZ_e$ as image.  We set $\widehat{\calG}$ to be the graph of groups $(\widehat{Y},\{G_v\},\{G_e\},\{h_e\})$.    

As $\Gamma$ is a tree, the presentation for $\pi_1(\widehat{\calG})$ shows that $\pi_1(\widehat{\calG}) \cong A_\Gamma$.  

Notice that $\widehat{\calG}$ is not reduced.  Indeed, for every vertex in $V\Gamma$, there is an incident edge that is not a loop and for which the inclusion map is an isomorphism.  For each vertex in $V\Gamma$, we fix one such edge and perform the collapse.  Let $\calG = (Y,\{G_v\},\{G_e\},\{h_e\})$ be the resulting graph of groups.  We then observe that the vertex set $VY$ corresponds the set set $V\widehat{Y} - V\Gamma$ and hence $G_v = \ZZ^2$ for each vertex $v \in VY$.  The edge groups do not change and hence $G_e = \ZZ$ for each edge $e \in EY$.  As $\pi_1(\calG) = \pi_1(\widehat{\calG}) \cong A_\Gamma$, we have shown that $A_\Gamma$ is tubular.   
\end{proof}

\begin{proof}[Proof of Theorem~\ref{thm:raags}]
Let $A_\Gamma$ be a right-angled Artin group where $\gdim(A_\Gamma) = 2$.

The equivalences of items (1) and (2) follow from Theorem~\ref{thm:zero iff graph of groups} and Lemmas~\ref{lem:uniform uniform raag} and~\ref{lem:omega = 0 -> tubular}.  The equivalences of items (2) and (3) follow immediately from Lemmas~\ref{lem:tubular -> forest} and~\ref{lem:forest -> tubular}.

Finally, by Proposition~\ref{prop:complementary} and Lemma~\ref{lem:uniform uniform raag}, if $\omega(X) \neq 0$ for some $A_\Gamma$--complex $X$ then \[\uomega(A_\Gamma) \geq \frac{\udelta(A_\Gamma)}{2 \cdot \const} = \frac{\log 3}{2 \cdot \const}. \qedhere \]
\end{proof}

%%%%%%%%%%%%%%%%%%%%%%%%%%%%%%%%%%%%%%%%%%%%%%%%%%%%%%%%%%%%%%%%%%%%%%%%%%%%%

\bibliography{mve}
\bibliographystyle{acm}

\end{document}